\documentclass[onecolumn]{autart}
\usepackage{}    % Enable this line and disable the
\usepackage{amsmath,bm,times}
\usepackage{mathrsfs}
\usepackage{bbm}    % Enable this line and disable the
\usepackage{graphicx}           % Include figure files
\usepackage{dcolumn}            % Align table columns on decimal point
\usepackage{bm}                 % bold math

\usepackage{graphics}           % for pdf, bitmapped graphics files
\usepackage{epsfig}             % for postscript graphics files
\usepackage{times}              % assumes new font selection scheme installed
\usepackage{amsmath}
\usepackage{amssymb}
\usepackage{amsfonts}
\usepackage{fancyhdr}
\usepackage{color}
\usepackage{subfigure}
\usepackage{enumerate}
\usepackage{cite}
\usepackage{wrapfig}
\usepackage{url}

\usepackage{bbm}
\usepackage[mathscr]{euscript}
\usepackage{graphicx}
\usepackage{amsmath,amssymb}
\usepackage{bm}

\usepackage{pstricks}
\usepackage{pst-node}

\usepackage{tikz}
\usetikzlibrary{arrows,chains,matrix,positioning,scopes}
\tikzstyle{element}=[rectangle,draw,fill=white, line width=1pt]
\tikzstyle{terminal}=[circle,draw, scale=0.3, line width=1pt,red]
\tikzstyle{fleche}=[->,>=stealth', very thick]
\tikzstyle{fleche1}=[->,>=stealth', very thick, red]
\usepackage{placeins}

\def\R{{\mathbb{R}}}

\def\la{\lambda}

\def\calL{{\mathcal{L}}}

\def\calT{{\mathcal{T}}}

\def\calB{{\mathcal{B}}}

\def\calA{{\mathcal{A}}}

\def\calF{{\mathcal{F}}}

\def\R{\mathbb R}

\def\C{\mathbb C}

\def\N{\mathbb N}

\newtheorem{example}{Example}[section]
\newtheorem{mainassumptions}{Assumption}[section]

\definecolor{darkgoldenrod4}{rgb}{0.55,0.4,0.55}
\definecolor{maroon4}{rgb}{0.55,0.11,0.38}
\definecolor{indianred}{rgb}{0.8,0.36,0.36}
\definecolor{purple1}{rgb}{0.61,0.19,1}
\definecolor{goldenrod1}{rgb}{1,0.76,0.15}
\definecolor{indianred3}{rgb}{0.8,0.33,0.33}
\definecolor{red4}{rgb}{0.55,0,0}
\definecolor{darkslategray}{rgb}{0.18,0.31,0.31}
\definecolor{firebrick}{rgb}{0.7,0.13,0.13}
\definecolor{slateblue3}{rgb}{0.41,0.35,0.8}
\definecolor{mediumorchid4}{rgb}{0.48,0.22,0.55}
\definecolor{thistle4}{rgb}{0.55,0.48,0.55}
\definecolor{rltred}{rgb}{0.75,0,0}
\definecolor{rltgreen}{rgb}{0,0.5,0}
\definecolor{oneblue}{rgb}{0,0,0.75}
\definecolor{marron}{rgb}{0.64,0.16,0.16}
\definecolor{forestgreen}{rgb}{0.13,0.54,0.13}
\definecolor{purple}{rgb}{0.62,0.12,0.94}
\definecolor{dockerblue}{rgb}{0.11,0.56,0.98}
\definecolor{freeblue}{rgb}{0.25,0.41,0.88}
\definecolor{myblue}{rgb}{0,0.2,0.4}

\newtheorem{theorem}{Theorem}[section]  %
\newtheorem{lemma}{Lemma}[section]
\newtheorem{corollary}{Corollary}[section]

\newtheorem{definition}{Definition}[section]

\newtheorem{remark}{Remark}[section]

\numberwithin{equation}{section}

\newenvironment{proof}{{\it Proof: \enspace}}{\hfill $\blacksquare$\par}

\allowdisplaybreaks[4] % [1] page breaks are allowed, but avoided if possible

\begin{document}
%\linenumbers
%\begin{frontmatter}
%\runtitle{Insert a suggested running title}  % Running title for regular
                                              % papers but only if the title
                                              % is over 5 words. Running title
                                              % is not shown in output.
%
\begin{frontmatter}
%\runtitle{Insert a suggested running title}  % Running title for regular
                                              % papers but only if the title
                                              % is over 5 words. Running title
                                              % is not shown in output.

\title{A Spectral-based ISS small-gain theorem  for   boundary control systems with infinite couplings} % Title, preferably not more
                                              % than 10 words.

%\thanks[footnoteinfo]{Corresponding author:Jun Zheng}
%\thanks{This work was supported in part by the National Natural Science Foundation of China under Grant 11901482}

\author{Yassine El Gantouh$^1$}\ead{elgantouhyassine@gmail.com},
\author{Jun Zheng$^1$}\ead{zhengjun2014@aliyun.com},
\author{Guchuan Zhu$^2$}\ead{guchuan.zhu@polymtl.ca},\ and
\author{Dingshi Li$^1$}\ead{lidingshi2006@163.com}

\address{$^1${School of Mathematics}, Southwest Jiaotong University, Chengdu, 611756,  China}  % Please supply
\address{$^2$Department of Electrical Engineering, Polytechnique Montr\'{e}al, Montreal, H3T 1J4, Canada}

\begin{abstract}                           % Abstract of not more than 200 words.
 We study the input-to-state stability (ISS) of boundary control systems allowing for infinitely many boundary couplings. Using semigroup perturbation theory and the theory of positive linear operators on Banach lattices, we derive a spectral small-gain condition ensuring exponential ISS. We further investigate linear Boltzmann-type equations on an infinite network of intersecting circles, incorporating delays, scattering, and disturbances acting at the junction. For this class of systems, we prove that a spectral small-gain condition on the transmission operator matrix guarantees exponential ISS with respect to disturbances propagating through the network. Moreover, we derive explicit ISS estimates for {certain} classes of dynamical processes. Finally, we demonstrate the practical applicability of our results by considering two important classes of time-delayed transmission conditions.
\end{abstract}

\vspace*{-12pt}
\begin{keyword}                            % Five to ten keywords,
Boundary control systems, infinite-dimensional systems,  input-to-state stability,  small-gain conditions,  positive operators, Boltzmann equations, infinite networks
\end{keyword}
\end{frontmatter}
\endNoHyper
%\linenumbers

%\tableofcontents %
%%%%%%%%%%%%%%%%%%%%%%%%%%%%%%%%%%%%%%%%%%%%%%%%%%%%%%%%%%%%%%%%%%%%%%%%%%%%%%%%%%%%%%%%%%%%%%%%%

 \section{Introduction}
    Over the past few decades, models based on partial differential equations (PDEs) have proven highly effective for analyzing flows on networks in {fields such as} irrigation, gas pipelines, blood circulation, vehicular traffic, supply chains (see, e.g., \cite{BCGHP} for a survey). In these models, each PDE is defined on a one-dimensional domain corresponding to an edge of a metric graph, and the dynamics on different edges are coupled through transmission conditions (TCs) imposed at the vertices. The design and analysis of TCs not only critically influence the overall system dynamics but also represent a major source of mathematical complexity (see, e.g., \cite{BRT,BCGHP,YGS,DZ,Dorn,HLS} and references therein). Notably, TCs are inherently sensitive to external disturbances and internal uncertainties, making the development of robust control and observation strategies essential for reliable network operation.

    A rigorous framework for analyzing stability with respect to disturbances for interconnected systems of ordinary differential equations (ODEs) is provided by input-to-state stability (ISS) theory, introduced by Sontag in the late 1980s \cite{Sontag}. ISS unifies Lyapunov and input-output stability, providing a cohesive treatment of both internal dynamics and external disturbances. This framework has become a cornerstone of robust stability analysis, with applications in robust stabilization, nonlinear observer design, and the stability of large-scale networked systems (see, e.g., \cite{KKK,Sontag1} and references therein). A particularly powerful tool within ISS theory is the small-gain approach, which establishes the stability of interconnected systems by verifying the ISS of individual subsystems and imposing a small-gain condition (SGC) on their interconnection gains \cite{DRW1,JTP}.

   While the small-gain framework is well understood for finite interconnections of ODEs, many applications involve infinite network structures, {for which} the analysis becomes significantly more challenging. The main difficulty arises from the fact that {in this case,} the gain operator acts on an infinite-dimensional space. As shown in \cite{DMSW}, the classic max-form SGCs developed for finite-dimensional systems \cite{DRW1} fail to ensure stability for infinite networks, even in the linear setting. To overcome this limitation, more restrictive robust strong SGCs were proposed in \cite{DMSW,DP}. For infinite networks composed of exponentially input-to-state stable (ISS) subsystems with linear gains, it was proved  in \cite{KMSNZ} that if the spectral radius of the gain operator is less than one, the network is exponentially ISS. Moreover, the system admits a coercive exponential ISS Lyapunov function. This Lyapunov-based framework was later extended to nonlinear gains in \cite{KMZ}.
	
   Building on these developments, ISS small-gain analysis has also been extended to finite couplings of infinite-dimensional systems, including PDEs \cite{DM,KK4,KK5,MI,Andri1}. A fundamental distinction in PDE stability analysis concerns the spatial location of couplings and disturbances, which has no direct analogue for ODEs. For PDEs involving only in-domain couplings or disturbances, the ISS can often be directly established by extending Lyapunov methods from finite-dimensional systems \cite{DM}. In contrast, boundary couplings and disturbances introduce substantial analytical challenges, as their action is modeled by unbounded operators, which complicates the direct application of classical Lyapunov techniques (see, e.g., \cite{JaNPS,KK1,KK2,LSZS,MKK,ZG3,ZG4}). Surveys and summaries on ISS analysis for PDEs and infinite-dimensional systems can be found in \cite{KK3,Andri,ZG2,ZG6}.

   %In particular, their action is modeled by unbounded operators in the abstract formulation, complicating the direct application of Lyapunov techniques; see  \cite{ZG2,ZG6} for a summary of  the methods used for ISS analysis of different systems  and \cite{KK3,Andri} for surveys on the development of ISS theory for PDEs and infinite-dimensional systems, respectively.

    Recently, ISS small-gain analysis for infinite-dimensional systems with infinitely many couplings was considered in \cite{MKG}, {where} a nonlinear small-gain theorem was established, showing that the network is ISS if the corresponding nonlinear gain operator satisfies the monotone limit property. However, the coupling structures considered in \cite{MKG} are restricted to semimaximum or summation forms and therefore do not cover PDE systems with nonlocal or boundary couplings. Moreover, for many infinitely coupled PDE systems, well-posedness and stability cannot be assumed {\textit{a priori}} and typically require careful case-by-case analysis.

    Motivated by the ISS analysis of  infinitely boundary-coupled PDEs, this paper investigates the ISS of a general class of boundary control systems.  The considered framework encompasses a broad range of linear time-invariant (LTI) infinite-dimensional systems. A key feature of {this} framework is that it allows the modeling of PDEs with infinitely many boundary or in-domain couplings. It also covers several important classes of systems, including PDEs with dynamic boundary conditions, models of population dynamics with unbounded birth processes, and time-delay systems.
    % The considered framework encompasses a broad range of linear time-invariant (LTI) infinite-dimensional systems, including PDEs with dynamic boundary conditions, models of population dynamics with unbounded birth processes, time-delay systems, and PDEs with infinitely many boundary or in-domain couplings.
    A comprehensive characterization of ISS for LTI infinite-dimensional systems with unbounded input operators in a general setting was established in \cite{JaNPS}. In particular, it was shown that the ISS for such systems is equivalent to the exponential stability of the underlying semigroup together with the admissibility of the input operator in the sense of \cite{WC}. Although this characterization is mathematically general, verifying the admissibility condition in practice is often difficult, {because} it depends on detailed structural properties of the semigroup that are rarely available in an explicit form. {To address this issue,} one of the main objectives of this paper is to provide, for a general class of boundary control systems, a spectral SGC that guarantees the ISS without requiring the verification of additional admissibility conditions.

    A major difficulty in establishing the ISS property for this class of boundary control systems stems from the unbounded and non-closable nature of the operator describing an infinite number of boundary couplings. More precisely, this unboundedness implies that, when the boundary control system is reformulated as a control system with distributed input, the resulting system is subject to a Weiss-Staffans type perturbation \cite{WF,Staffans}. As a consequence, the state evolves in two distinct extrapolation spaces associated with the original state space, which prevents the direct application of existing ISS characterizations such as those in \cite{JaNPS}. One possible approach to overcoming this difficulty is to construct a suitable Banach space that is contained in the intersection of these two extrapolation spaces and contains the range of the input operator. An elegant construction of such spaces was developed in \cite{WF} within the framework of regular linear systems. However, this framework requires verifying that an abstract quadruple of operator families satisfies certain functional equations, a task that is itself highly nontrivial; see, for example, \cite{WF} for the Hilbert space case and \cite[Chap.~7]{Staffans} for the general Banach space setting.

     To overcome these analytical difficulties, we propose a purely operator-theoretic approach for studying the ISS of this general class of boundary control systems. First, we employ a perturbation technique inspired by \cite{Gr,Salam} to transform the original boundary control system into a system governed by an operator with a perturbed domain (in the sense of \cite{Gr}) and an unbounded input operator. Subsequently, in the spirit of \cite{WF}, we construct a Banach space contained in the intersection of the two relevant extrapolation spaces that also contains the range of the input operator, independently of the abstract framework developed in \cite{Staffans,WF,WT}. This construction renders the admissibility property well-defined within the perturbed framework and enables both well-posedness and ISS analysis. Our approach exploits the inherent positivity of the unforced  dynamics and combines perturbation theory for positive semigroups \cite{El1}, resolvent-based admissibility criteria \cite{ELLC}, and the theory of positive linear operators on Banach lattices. In particular, we establish a SGC through a spectral characterization of ISS for this class of boundary control systems and derive an explicit ISS estimate. This alternative perspective provides new insights into the well-posedness and ISS analysis of boundary control systems.

   {Based on} the development of the theoretical framework, we investigate the ISS of linear Boltzmann-like equations on an infinite network of intersecting circles governed by boundary-coupled first-order PDEs with boundary input. Building on models with simpler TCs \cite{YGS}, we consider a more general setting incorporating distributed delays, scattering effects, and vertex inputs, which lead to nonlocal TCs in both time and space at the network junction. Using our abstract framework, we derive a spectral SGC ensuring the ISS without imposing assumptions on the ratios of edge lengths or delays. This extends existing stability criteria such as those in \cite{YGS} and provides a complete and verifiable condition for robust stability {of} networked transport systems. Furthermore, we establish an explicit ISS estimate that quantitatively characterizes the stability properties of the system in the presence of delays and scattering effects. Consequently, our results provide an alternative approach to proving the ISS for infinite networks of linear PDEs with nonlocal and boundary couplings, thereby bypassing the restrictive assumptions imposed in \cite{MKG}.

    Overall, the main contributions of this work are twofold:
    \begin{itemize}
    \item[(i)] We develop a {spectral-based small-gain theorem} for boundary control systems by integrating perturbation theory with positivity-based methods. This yields an alternative ISS criterion for infinite-dimensional systems that accommodates general coupling formulations.

   \item[(ii)] We apply the proposed framework to infinite networks governed by boundary-coupled PDEs with delays and scattering. In this setting, we derive a spectral small-gain condition without geometric or delay constraints together with explicit ISS estimates, thereby generalizing existing stability results for networked systems.
   \end{itemize}

    In the rest of this paper, we first introduce the notation used throughout. In Section~\ref{Sec:2}, we recall basic results on positive semigroups and provide an overview of boundary control systems, along with essential theoretical concepts. Section~\ref{Sec:2.3} is devoted to the problem formulation and the statement of the main results. In Section~\ref{Sec:3}, we present detailed proofs of these results. In Section~\ref{Sec:4}, we apply our findings to investigate the ISS of linear Boltzmann-like equations on an infinite network of intersecting circles, incorporating delays, scattering effects, and vertex disturbances at the junctions. Finally, in Appendix~\ref{App1}, we prove tow technical lemmas used in the proof of the main results.

	{\bf Notation}. Throughout this paper, $\C$, $\R$, $\R_+$, and $\N$ denote the sets of complex numbers, real numbers, positive real numbers, and natural numbers, respectively.  For a Banach space $E$, an interval $I \subset \R$, and $p \in [1,\infty]${:}
	\begin{itemize}
		\item  ${L}^p(I;E)$ denotes the space of (equivalence classes of) Bochner measurable functions $f \colon I \to E$ such that $	\Vert f\Vert_{{ L}^p(I;E)} := \left(\int_I \Vert f(x)\Vert_E^{p}dx\right)^{1/p} <\infty$ for $p<\infty$,
		and $ \Vert f\Vert_{{ L}^\infty(I;E)}:= \operatorname{ess\,sup}_{x\in I} \Vert f(x)\Vert_E$ for $p=\infty$. When $I =(a,b)$ with $a,b\in\mathbb{R}$ satisfying  $a<b$, we simply write ${ L}^p(a,b;E)$.
		\item  ${C}(I; E)$ denotes the space of continuous functions from $I$ to $E$, equipped with the supremum norm.
		%\item ${W}^{1,p}(I;E)$ denotes the Sobolev space of absolutely continuous functions $f \colon I \to E$ whose distributional derivative $\partial_xf$ belongs to ${ L}^p(I;E)$, endowed with the norm
		%\begin{align*}
		%$\Vert f\Vert_{{W}^{1,p}(I;E)} := \Vert f\Vert_{{ L}^p(I;E)} + \Vert \partial_xf\Vert_{{ L}^p(I;E)} .$
		%\end{align*}
	\end{itemize}
	  %, and $W^{1,p}(\R_+; E)$.
	
	Let $E$ and $F$ be Banach spaces. {We} denote by $\mathcal{L}(E,F)$ the space of bounded linear operators from $E$ to $F$, and write $\mathcal{L}(E) = \mathcal{L}(E,E)$. For $P \in \mathcal{L}(E,F)$ and a subspace $Z \subset E$, {we denote by} $P\vert_{Z}$ the restriction of $P$ to $Z$; $\operatorname{ran} P$ and $\ker P$ denote its range and kernel, respectively. For $P \in \mathcal{L}(E)$, $r(P)$ denotes its spectral radius. The bracket $\langle \cdot,\cdot\rangle_{E,E'}$ denotes the duality pairing, where $E'$ is the topological dual of $E$. The symbol $\mathcal{I}$ denotes the identity operator (context will clarify).
	
	Let $E = (E, \le)$ be a \emph{Banach lattice}, i.e., a partially ordered Banach space in which every pair of elements $x, y \in E$ has a supremum $x \vee y$, and the following properties hold for all $x, y, z \in E$ and $\alpha \ge 0$:
	\begin{itemize}
		\item $x \le y$ implies $x + z \le y + z$ and $\alpha x \le \alpha y$,
		\item $|x| \le |y|$ implies $\|x\| \le \|y\|$,
	\end{itemize}
	where $|x| = x \vee (-x)$ denotes the absolute value of $x$. The set $E_+ = \{x \in E : 0\le x\}$ forms the \emph{positive cone}. This notation is also used for general ordered Banach spaces. An operator $P \in \mathcal{L}(E,F)$ between Banach lattices is called \emph{positive} if $P E_+ \subset F_+$, and we denote the set of such operators by $\mathcal{L}_+(E,F)$. The topological dual $E'$ of a Banach lattice $E$, endowed with the dual norm and the dual order, is also a Banach lattice. For further details; see, e.g., \cite{CHARALAMBOS,Schaf}.
	
	Let $\ell^1 := \left\{ (y_k)_{k \in \mathbb{N}}: \sum_{k=1}^{\infty} |y_k| < \infty \right\}$ denote the space of absolutely summable sequences. For an infinite matrix $M$, $M_{ij}$ denotes the $(i,j)$-entry and for $y = (y_j)_{j\in \N}$, the product $M y$ is defined by  $(M y)_i:=\sum_{j=1}^{\infty}M_{ij}y_j$ whenever the series converges. The symbol $(\cdot)^\top$ denotes the transpose of a (finite) vector. The induced operator norm of ${M}$ on $\ell^1$ is given by $\Vert M \Vert=\sup_{j\in \N}\sum_{i=1}^{\infty}\vert M_{ij}\vert$. We write $\operatorname{diag}(y_j)_{j \in \N}$ the infinite diagonal matrix with entries $y_1, y_2, ...$ along the main diagonal.

    \section{Preliminaries}\label{Sec:2}
    In this section, we first introduce the necessary concepts and recall several known results that will be used throughout the analysis. We then present the notions of admissibility and ISS for boundary control systems.

    Throughout this paper, without special statements, $X$, $Y$, and $U$ denote Banach spaces, referred to as the state space, boundary space, and control space, respectively.
	
	\subsection{A review of positive semigroups}\label{Sec:2.1}
	Here we recall basic definitions and facts about positive semigroups. For further details, we refer to \cite{BFR,Nagel}.
	
	Let $A:\bm{D}(A) \subset X \to X$ be a closed and densely defined linear operator. The \emph{resolvent set} of $A$ is defined as
	%\begin{align*}
	$\rho(A) := \{\lambda\in \mathbb{C}:\lambda \mathcal{I}-A \text{ is bijective and } (\lambda \mathcal{I}-A)^{-1} \in \mathcal{L}(X)\}$,
	%\end{align*}
	and the \emph{spectrum} is $\sigma(A) := \mathbb{C} \setminus \rho(A)$. For $\lambda \in \rho(A)$, the operator $R(\lambda, A) := (\lambda \mathcal{I}-A)^{-1}$ is called the \emph{resolvent operator} of $A$ at $\lambda$.
	The \emph{spectral bound} of $A$ is defined as $s(A) := \sup\{\operatorname{Re} \lambda : \lambda \in \sigma(A)\}$, with the convention that $s(A) = -\infty$ if $\sigma(A) = \emptyset$.
	
	A $C_0$-semigroup $T = (T(t))_{t \ge 0}$ on $X$ is a strongly continuous family of bounded linear operators satisfying $T(0) = \mathcal{I}$ and $T(t+s) = T(t)T(s)$ for all $t, s \ge 0$. Its \emph{uniform growth bound} and \emph{growth bound} are defined respectively as
	\begin{align*}
		&\omega_0(T) :=  \inf\left\{\omega \in \mathbb{R}:   \exists M \ge 1\ \text{such\ that\ } \|T(t)\| \le Me^{\omega t}, \ \forall t \ge 0\right\} , \\
		&\omega_1(T) := \inf\left\{\omega \in \mathbb{R} : \forall x\in {\bm D}(A), \exists M \ge 1 \ \text{such\ that\ }  \|T(t)x\| \le Me^{\omega t}\|x\|,\ \forall t \ge 0\right\}.
	\end{align*}
	If $A$ is the generator of $T$, then $-\infty\le s(A) \le \omega_1(T) \le \omega_0(T)< +\infty $; see, e.g., \cite{EN}. In particular, $T$ is called \emph{uniformly exponentially stable} if $\omega_0(T) < 0$, and \emph{exponentially stable} if $\omega_1(T) < 0$. If $\omega_0(T) = s(A)$, then $T$ is said to satisfy the \emph{spectrum determined growth property}; see, e.g., \cite[p.~161]{CZ}.
    %In particular, the semigroup $T$ is called uniformly exponentially stable if $\omega_0(T) < 0$, while exponentially stable if $\omega_1(T) < 0$. When $\omega_0(T)=s(A)$, the semigroup is said to satisfy the \emph{spectrum determined growth property}; see, e.g., \cite[p. 161]{CZ}.

    Now let $X$ be a Banach lattice. A $C_0$-semigroup $T$ is called \emph{positive} if $T(t)X_+ \subset X_+$ for all $t \ge 0$. The operator $A$ is called \emph{resolvent positive} if there exists $\omega \in \mathbb{R}$ such that $(\omega,\infty) \subset \rho(A)$ and $R(\lambda,A) \ge 0$ for all $\lambda > \omega$. Every generator of a positive $C_0$-semigroup is resolvent positive; however, the converse does not hold in general (see \cite{Arendt} for counterexamples and further discussion).

    For generators of positive $C_0$-semigroups on Banach lattices, the equality $s(A) = \omega_1(T)$ holds. Moreover, positive $C_0$-semigroups on $L^p$-spaces (for $1 \le p < \infty$) satisfy the spectrum determined growth property \cite{Lutz1}. Further details on the stability theory of positive semigroups can be found in \cite[Sect.~C.IV.1]{Nagel}.

   %Let $X$ be a Banach lattice. We call the $C_0$-semigroup $T$ \emph{positive} if $T(t)X_+ \subset X_+$ for all $t \ge 0$. The operator $A$ is called \emph{resolvent positive} if there exists $\omega \in \mathbb{R}$ such that $(\omega, \infty) \subset \rho(A)$ and $R(\lambda, A) \ge 0$ for all $\lambda > \omega$. While every generator of a positive $C_0$-semigroup is resolvent positive, the converse does not hold in general. Counterexamples and further discussions can be found in \cite{Arendt}. For generators of positive $C_0$-semigroups on Banach lattices, the equality $s(A)=\omega_1(T)$ holds. Moreover, positive $C_0$-semigroups on $L^p$-spaces (for $1\le p<\infty)$) satisfy the spectrum determined growth property \cite{Lutz1}. Further details on the stability theory of positive semigroups can be found in \cite[Sect. C.IV.1]{Nagel}.
	
	\subsection{Preliminaries on boundary control systems}\label{Sec:2.2}
	Consider the abstract boundary control system (see, e.g., \cite{Fattorini})
	\begin{align}\label{BC}
		\begin{cases}
			\dot{z}(t)= \tilde{A} z(t),& t>0, \quad z(0)={z_0},\\
			\Gamma z(t)=K u(t),& t>0,
		\end{cases}
	\end{align}
	where $z_0 \in X$ is the initial state, and $u \colon \R_+ \to U$ is a measurable, locally $p$-integrable function for some $p \in [1,\infty]$. The operator $\tilde{A} \colon \bm{D}(\tilde{A}) \subset X \to X$ is linear, closed, and densely defined, with domain $\bm{D}(\tilde{A})$ continuously embedded in $X$. Moreover, $\Gamma \colon \bm{D}(\tilde{A}) \to Y$ is a linear boundary operator, and $K \in \mathcal{L}(U,Y)$.
	
   In the context of PDEs, $\tilde{A}$ typically represents the spatial differential operator governing the internal dynamics, while $\Gamma$ and $K$ determine the boundary conditions of the system. The interpretation of \eqref{BC} is that the initial value problem $\dot{z}(t) = \tilde{A}z(t)$ with $z(0) = z_0$ is, in general, ill-posed and does not admit a unique solution unless the boundary condition $\Gamma z(t) = K u(t)$ is imposed. Hence, the two equations in \eqref{BC} are intrinsically coupled, with the boundary condition acting as an essential constraint that determines the evolution of the system.

  To ensure the well-posedness and stability of the boundary control system \eqref{BC}, we make the following standard assumptions:
	\begin{itemize}
		\item[{\bf (H1)}] $A:= \tilde{A}\vert_{\ker \Gamma}$ generates a $C_0$-semigroup $T:= (T(t))_{t\ge 0}$ on $X$;
		\item[{\bf (H2)}] $\Gamma$ is surjective.
	\end{itemize}
	
	Under assumptions {\bf (H1)}--{\bf (H2)}, the domain ${\bm D}(\tilde{A})$ admits the decomposition
	\begin{align}\label{decomposition}
		{\bm D}(\tilde{A})={\bm D}(A) \oplus \ker(\lambda \mathcal{I}- \tilde{A}),\qquad \lambda\in \rho(A).
	\end{align}
	Moreover, for any $\lambda \in \rho(A)$, the \emph{Dirichlet operator} associated with $(\tilde{A},\Gamma)$, defined by
	\begin{align}\label{Dirichlet}
		D_\lambda := (\Gamma|_{\ker(\lambda \mathcal{I}-\tilde{A})})^{-1} : Y \to \ker(\lambda \mathcal{I}-\tilde{A}) \subseteq X,
	\end{align}
	exists and is bounded (see \cite[Lems. 1.2 and 1.3]{Gr}). This allows us to define the \emph{boundary control operator}
	\begin{align}\label{boundary-control}
		B:=(\lambda \mathcal{I}-A_{-1})D_\lambda \in \mathcal{L}(Y,X_{-1,A}),\quad \lambda \in \rho(A),
	\end{align}
     where $X_{-1,A}$ is the extrapolation space associated with $A$ (i.e., the completion of $X$ under the norm $\|x\|_{-1,A}:=\|R(\la,A)x\|$ for $x\in  X$ and some $\lambda\in \rho(A)$), with domain $\bm{D}(A_{-1}) = X$, is the generator of $T_{-1} := (T_{-1}(t)){t \ge 0}$, the extension of $T$ to $X_{-1,A}$ (see, e.g., \cite[Sect.~II.5]{EN}).

    We note that the operator $B$ is independent of $\lambda$, satisfies $\operatorname{ran} B \cap X = \{0\}$, and yields the representation
	\begin{align}\label{representation}
		\tilde{A}x = (A_{-1} + B \Gamma)x, \qquad x \in {\bm D}(\tilde{A}),
	\end{align}
	since $\lambda D_\lambda y = \tilde{A}D_\lambda y$ for all $y \in Y$. Using this representation, the boundary control system \eqref{BC} can be reformulated as
	\begin{align}\label{distributed}
		\dot{z}(t)= A_{-1}z(t)+B K u(t),\quad t>0,\qquad z(0)=z_0.
	\end{align}
	
       For $t \ge 0$, let $\Phi_t^A \in \mathcal{L}(L^p(\R_+;Y), X_{-1,A})$ denote the \emph{input-map} of $(A,B)$, defined for each $v \in L^p(\R_+;Y)$ by
       \begin{align*}
		\Phi_{t}^A v:=\int_{0}^{t}T_{-1}(t-s)Bv(s){\rm{d}}s.
	\end{align*}
	Then, the mild solution of system \eqref{distributed} is given by
	\begin{align}\label{S2.2}
		z(t)=T(t)z_0+\Phi_{t}^A (Ku),
	\end{align}
	for all $t\ge 0$, $z_0\in X$, and $u\in L^{p}_{loc}(\R_+;U)$. Note that $z(t) \in X_{-1,A}$ for all $t \ge 0$. This motivates the following definition of admissible control operators \cite{WC}.
	\begin{definition}\label{Def.B-admi}
	 For $p \in [1, \infty]$, the operator $B$ is called an \emph{$L^p$-admissible control operator} for $A$ (or, equivalently, the pair $(A,B)$ is $L^p$-admissible) if, for some (and hence for all) $t > 0$, $\operatorname{ran} \Phi_{t}^A \subset X$.	
	\end{definition}
	
	\begin{remark}
		The ${L^p}$-admissibility property implies the following facts:
		\begin{itemize}
			\item[(i)] For every $t \ge 0$, there exists a constant $\kappa_t > 0$ such that
			\begin{align*}
				\Vert \Phi_{t}^A v \Vert \leq \kappa_t \Vert v \Vert_{L^{p}(0,t;Y)}, \quad \forall v \in L^{p}(\mathbb{R}_+;Y).
			\end{align*}
			\item[(ii)] {For any $z_0 \in X$ and any $u \in L^{p}_{loc}(\R_+;U)$ with $p \in [1,\infty)$, the solution $z(\cdot)$ of system \eqref{distributed} (and hence of \eqref{BC}) is a continuous $X$-valued function, cf. \cite[Prop. 2.3]{WC}}.
		\end{itemize}
	\end{remark}
	
	\begin{definition}
		We say that $(A,B)$ is \emph{infinite-time $L^p$-admissible} if it is $L^p$-admissible and $\kappa_\infty := \sup_{t > 0} \kappa_t < \infty$. We say that $(A,B)$ is \emph{zero-class $L^p$-admissible} if it is $L^p$-admissible and $\lim_{t \to 0} \kappa_t = 0$.
	\end{definition}
	
	\begin{remark}\label{Remark-zer0-class}
		\begin{enumerate}
			\item[(i)] Infinite-time $L^p$-admissibility implies $L^p$-admissibility. If $\omega_0(T)<0$, the two notions coincide \cite[Lem. 2.9-(i)]{JaNPS}. However, infinite-time $L^p$-admissibility does not imply uniform exponential stability of $T$ (see, e.g., \cite[Exp. 4.2.7]{TW}).
			\item[(ii)] Zero-class $L^p$-admissibility implies $L^p$-admissibility. Moreover, if $(A,B)$ is $L^p$-admissible for some $p \in [1, \infty)$, then by H\"{o}lder's inequality, $(A,B)$ is zero-class $L^q$-admissible for all $q > p$.
			\item[(iii)]  If $(A,B)$ is zero-class $L^\infty$-admissible, then the solution of system \eqref{distributed} (and hence of \eqref{BC}) satisfies $z\in {C}(\R_+;X)$ for every $z_0 \in X$ and $u \in L^{\infty}_{loc}(\R_+;U)$; see  \cite[Prop. 2.5]{JaNPS}.
		\end{enumerate}
	\end{remark}
	
	We now recall the definition of exponential input-to-state stability for boundary control systems; see \cite[Def. 2.7]{JaNPS} and \cite[Def. 3.17]{Andri}.
	\begin{definition}\label{ISS-def}
		For $p \in [1, \infty]$, system \eqref{BC} is said to be \emph{exponentially $L^p$-ISS} with linear gain if there exist constants $M, \omega, \varrho > 0$ such that for every $t \ge 0$, $z_0 \in X$, and $u \in L^p(\R_+; U)$,
		\begin{itemize}
			\item[(i)] $z  \in {C}(\R_+;X)$, and
			\item[(ii)] $\Vert z(t) \Vert \le M e^{-\omega t} \Vert z_0 \Vert + \varrho \Vert u \Vert_{L^p(\R_+; U)}$.
		\end{itemize}
        In particular, system \eqref{BC} is said to be \emph{exponentially ISS} when $p = \infty$.
   	\end{definition}
	
	\begin{remark}\label{Remark-ISS}
		According to \cite[Prop. 2.10]{JaNPS}, exponential $L^p$-ISS of system \eqref{BC} is equivalent to the uniform exponential stability of the semigroup $T$ together with the $L^p$-admissibility of $(A,B)$ for $p \in [1,\infty)$. In this case, the ISS gain satisfies $\varrho = \Vert K\Vert \kappa_\infty$, cf. \cite[Rem. 2.12]{JaNPS}. For $p=\infty$, if the state $z$ is continuous in time with respect to the norm of $X$, then the system \eqref{BC} is ISS if and only if $\omega_0(T)<0$. In particular, this holds when $(A,B)$ zero-class $L^\infty$-admissible. Further discussion can be found in \cite[Sec. 2]{JaNPS} and \cite[Sec. 3.2]{Andri}.
	\end{remark}
	
	\section{Problem statement and main results}\label{Sec:2.3}
	In this section, we establish a complete characterization of the ISS for a general class of boundary control systems. More precisely, we consider the system \eqref{BC} with $\Gamma = G-P$, where $G,P:{\bm D}(\tilde{A}) \to Y$ are linear, unbounded boundary operators that are neither closed nor closable. This unbounded, non-closable structure naturally arises from the presence of infinitely many boundary couplings, which often occur in PDE models with complex interactions at the boundary. In this setting, the boundary control problem takes the form
    	\begin{align}\label{BVCP}
		\begin{cases}
			\dot{z}(t) = \tilde{A} z(t), & t > 0, \quad z(0)=z_0, \\
			G z(t) = P z(t) + K u(t), & t > 0.
		\end{cases}
	\end{align}
	\emph{The central question we address is: Under what conditions on $\tilde{A}$, $G$, $P$, and $K$ does \eqref{BVCP} admit a unique mild solution, and how are the norms of the state $z$ and the input $u$ quantitatively related over time?} More precisely, our goal is to identify sharp criteria guaranteeing exponential $L^p$-ISS of the system.

    %and how are the norms of the state $z$ and the input $u$ related over time?} More precisely, we seek a sharp criterion for ensuring the exponential $L^p$-ISS.
	
     As noted earlier, the main difficulty in establishing the well-posedness and analyzing the asymptotic behavior of \eqref{BVCP} stems from the boundary equation, which involves the unbounded operator $P$. A well-known challenge in this context is to obtain an $X$-valued solution. Indeed, reformulating \eqref{BVCP} into a system of the form \eqref{distributed} leads to a state evolving simultaneously in two distinct extrapolation spaces of $X$, which obstructs the direct establishment of uniqueness and continuous dependence of solutions. To make this issue precise, assume that $G$ is surjective and the restriction $A:=\tilde{A}\vert_{\ker G}$ generates a $C_0$-semigroup $T$ on $X$. Following Section \ref{Sec:2.1}, we define the Dirichlet operator $D_\lambda$ and the boundary control operator $B$ as in \eqref{Dirichlet} and \eqref{boundary-control}, respectively. This yields the representation \eqref{representation} with $G$ in place of $\Gamma$. Using this representation, the boundary control problem \eqref{BVCP} can be rewritten as %the distributed parameter system
	\begin{align}\label{BVCP-distributed}
		\dot{z}(t) = \check{\calA} z(t) + \calB u(t), \quad t > 0, \qquad z(0)={z_0},
	\end{align}
	where $\check{\calA} : {\bm D}(\check{\calA}) \to X$ is the operator defined by
	\begin{align}\label{Def.checkA}
		&\check{\calA} x := (A_{-1} + B P)x, \quad
		x \in {\bm D}(\check{\calA}) :=  \{x \in {\bm D}(\tilde{A}) : (A_{-1} + B P)x \in X \},
	\end{align}
	and $\mathcal{B} : U \to X_{-1,A}$ is the control operator defined by $\mathcal{B}u := B K u$ for $u \in U$.
	
    According to \cite[Prop. 2.10]{JaNPS},  system \eqref{BVCP-distributed} is exponential $L^p$-ISS  if and only if: (i) the semigroup generated by $\check{\mathcal{A}}$ is uniformly exponentially stable, and (ii) $(\check{\mathcal{A}},\mathcal{B})$ is $L^p$-admissible; see Remark \ref{Remark-ISS}. However, a direct application of this result is prevented by the fact that $\mathcal{B}$ does not map into the extrapolation space $X_{-1,\check{\mathcal{A}}}$ associated with $\check{\mathcal{A}}$. To clarify, assume that $\check{\mathcal{A}}$ generates a C$_0$-semigroup on $X$ and let $X_{-1,\check{\mathcal{A}}}$ be the completion of $X$ with respect to the norm $\Vert x\Vert_{-1,\check{\mathcal{A}}}:=\Vert R(\la, \check{\mathcal{A}})x\Vert$ for $x\in X$. Since $B \in \mathcal{L}(Y,X_{-1,A})$ and $X_{-1,A}$ generally differs from $X_{-1,\check{\mathcal{A}}}$, the $L^p$-admissibility of $\mathcal{B}$ for $\check{\mathcal{A}}$ is not \textit{a priori} well-defined. Indeed, a necessary condition for $\mathcal{B}$ to qualify as an admissible control operator for $\check{\mathcal{A}}$ is that it belongs to $\mathcal{L}(U, X_{-1,\check{\mathcal{A}}})$. Hence, to treat $\mathcal{B}$ as a control operator for $\check{\mathcal{A}}$, we must identify a subspace of $X_{-1,\check{\mathcal{A}}}$ that can be identified with a subspace of $X_{-1,A}$ containing $\operatorname{ran} B$. An elegant approach to this identification was developed by Weiss \cite{WF} within the framework of regular linear systems. However, this framework requires verifying that an abstract quadruple of operator families satisfies certain functional equations, a task that is itself highly nontrivial; see, e.g., \cite[Thm.~6.1, Thm.~7.2, and Prop.~7.10]{WF} and \cite[Thm.~7.1.2 and Thm.~7.5.3]{Staffans}. In the sequel, we propose an alternative, purely operator-theoretic approach to overcome this difficulty when the problem is posed on Banach lattices.

    The following is the first main result of this paper, which provides sufficient conditions for the well-posedness of system \eqref{BVCP}.
	\begin{theorem}\label{Main1}
		Let $X,Y$, and $U$ be Banach lattices, and let $p \in [1,\infty]$.
		Assume the following conditions hold:
		\begin{itemize}
			\item[\bf{(A1)}] $A := \tilde{A}\vert_{\ker G}$ is a densely defined resolvent positive operator and for some $\lambda_0 > s(A)$ there exists $c > 0$ such that
			\begin{align}\label{inverse2}
				\Vert R(\lambda, A)x \Vert \geq c \Vert x \Vert, \qquad \forall \lambda \ge \lambda_0, \, x \in {X_+}.
			\end{align}
			\item[\bf{(A2)}] $G$ is surjective and $D_{\lambda} \ge 0$  {for all sufficiently large $\lambda\in \R$}.
			\item[\bf{(A3)}] $P$ is positive and $r(P D_{\lambda_1}) < 1$ for some $\lambda_1 > s(A)$.
			\item[\bf{(A4)}] For all $y \in Y$ and $y^* \in Y'$:
			$$\lim_{\R \ni \lambda \to +\infty} \langle P D_\lambda y, y^* \rangle_{Y,Y'} = 0.$$
		\end{itemize}
		Then, the  boundary control system \eqref{BVCP} is well-posed in the sense that for every initial value $z_0 \in X$ and input $u \in L^p_{loc}(\R_+;U)$, there exists a unique mild solution $z \in {C}(\R_+; X)$, and for every $\tau > 0$ there exists a constant $C_{\tau,p} > 0$, which is dependent of $p$ when $p\in(1,\infty)$ and  independent of $p$ when $p=\infty$, such that
		\begin{align*}
			\Vert z(\tau) \Vert_X  \le C_{\tau,p} \left( \Vert z_0 \Vert_X  + \Vert u \Vert_{L^p(0,\tau;U)}   \right).
		\end{align*}
	\end{theorem}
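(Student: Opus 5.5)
Our plan has two parts. First we show that the operator $\check{\calA}$ in \eqref{Def.checkA} generates a positive $C_0$-semigroup on $X$. Then we build an intermediate Banach space in which $\mathcal{B}$ is an admissible control operator for $\check{\calA}$.

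\emph{Step 1: the semigroup for $\check{\calA}$.} We note that $\check{\calA}=\tilde{A}\vert_{\ker(G-P)}$. By the Greiner-type argument in \cite{Gr}, for $\lambda>s(A)$ with $r(PD_\lambda)<1$ we have $\lambda\in\rho(\check{\calA})$ and
\begin{align*}
R(\lambda,\check{\calA})=R(\lambda,A)+D_\lambda(\mathcal{I}-PD_\lambda)^{-1}PR(\lambda,A).
\end{align*}
This follows by solving $(\lambda-\tilde A)x=f$, $Gx=Px$ through the splitting $x=R(\lambda,A)f+D_\lambda y$, which forces $y=(\mathcal{I}-PD_\lambda)^{-1}PR(\lambda,A)f$.

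By (A2) and (A3), the maps $\lambda\mapsto D_\lambda$ and $\lambda\mapsto PD_\lambda$ are positive and decreasing in $\lambda$, since $D_\lambda-D_\mu=(\mu-\lambda)R(\lambda,A)D_\mu$. Hence $r(PD_\lambda)\le r(PD_{\lambda_1})<1$ for all $\lambda\ge\lambda_1$. The Neumann series is then positive, so $\check{\calA}$ is resolvent positive.

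To upgrade resolvent positivity to generation, we would invoke the positive-perturbation generation theorem of \cite{El1} (a Desch/Arendt-type result). Its hypotheses are exactly (A1), with the lower bound \eqref{inverse2} used to control $\|\lambda R(\lambda,A)\|$ on the positive cone, together with the weak convergence (A4). Condition (A4) gives $PD_\lambda\to0$ weakly; combined with positivity and monotonicity, this yields density of $\bm D(\check{\calA})$ and the Hille–Yosida-type bounds on the powers of the resolvent. The expected output is that $\check{\calA}$ generates a positive $C_0$-semigroup $\check T$ with $s(\check{\calA})<\lambda_1$.

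\emph{Step 2: admissibility of $\mathcal{B}$.} Following \cite{WF}, we define $Z:=\operatorname{ran}(\lambda-A_{-1})D_\lambda+X$, normed so that it embeds continuously both into $X_{-1,A}$ and into $X_{-1,\check{\calA}}$. The identification uses
\begin{align*}
(\lambda-\check{\calA}_{-1})D_\lambda(\mathcal{I}-PD_\lambda)^{-1}=B,
\end{align*}
which is checked on $\ker(\lambda-\tilde A)$. This shows $\mathcal{B}=BK\in\mathcal{L}(U,X_{-1,\check{\calA}})$ with $R(\lambda,\check{\calA}_{-1})\mathcal B=D_\lambda(\mathcal{I}-PD_\lambda)^{-1}K$.

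Admissibility then follows from the resolvent criterion of \cite{ELLC} for positive systems. The operator $R(\lambda,\check{\calA}_{-1})\mathcal{B}$ is bounded from $U$ to $X$. Splitting $K$ into positive parts as needed, positivity lets us pass from an estimate on positive inputs to one on all inputs, since $|\Phi_t u|\le\Phi_t|u|$. This gives $L^p$-admissibility for every $p$, and in particular zero-class $L^\infty$-admissibility, which yields the case $p=\infty$ with a constant independent of $p$. For $p<\infty$ we need $L^1$-admissibility or a direct estimate. We expect to obtain it from positivity: $\Phi_t(\mathbbm 1_{[0,t]}u)$ is dominated by an expression involving $\int_0^t\check T_{-1}(t-s)\mathcal B\,ds=(\check T(t)-\mathcal I)\check{\calA}^{-1}_{-1}\mathcal B$-type terms, shifted by $\lambda$ to avoid invertibility issues.

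\emph{Step 3: the mild solution.} The mild solution is $z(t)=\check T(t)z_0+\Phi_t^{\check{\calA}}(Ku)$. Continuity follows from Remark \ref{Remark-zer0-class}, and uniqueness follows from the resolvent identity. The bound $C_{\tau,p}$ combines $\sup_{t\le\tau}\|\check T(t)\|$ with $\kappa_\tau$.

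The main obstacle is Step 1, specifically generation rather than mere resolvent positivity, since resolvent positivity alone is insufficient (\cite{Arendt}). The key is to use \eqref{inverse2} together with (A4) inside the \cite{El1} perturbation framework. The second delicate point is the $L^p$, $p<\infty$, admissibility estimate, which we would try to reduce by positivity to the $p=1$ case.
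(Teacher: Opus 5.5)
Your proposal has a genuine gap in Step 2, at the admissibility of $\mathcal B$ for $\check{\calA}$. Positivity of $\check T$ and of $R(\lambda,\check{\calA}_{-1})\mathcal B$, together with boundedness of $R(\lambda,\check{\calA}_{-1})\mathcal B$ into $X$, does not yield admissibility. That boundedness holds for every control operator in $\calL(U,X_{-1,\check{\calA}})$. For a concrete failure, take the transport equation $\dot z=-\partial_x z$ on $L^2(0,1)$ with boundary input $z(t,0)=u(t)$. It is positive in exactly this sense, yet $(\Phi_t u)(x)=u(t-x)$, $0<x<t\le 1$, is not in $L^2$ when $u\in L^1\setminus L^2$.

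The criterion of \cite{ELLC} also needs the lower resolvent estimate on the positive cone \emph{for the perturbed generator} $\check{\calA}$. You never establish it: you use \eqref{inverse2} only for the unperturbed $A$, inside the generation step. Your fallbacks do not close the gap either:
\begin{itemize}
\item Zero-class $L^\infty$-admissibility does not imply $L^p$-admissibility for $p<\infty$. The implication runs from $L^1$ upward by H\"older, cf.\ Remark~\ref{Remark-zer0-class}.
\item Dominating $\Phi_t u$ by constant-input terms $\int_0^t\check T_{-1}(s)\mathcal B e\,{\rm d}s$ would need an order unit $e$ in $U$. None exists, e.g., for $U=L^1(v_{\min},v_{\max})$ as in Section~\ref{Sec:4}. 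Even with one, it would only bound $\Phi_t u$ by $\Vert u\Vert_{L^\infty}$, not $\Vert u\Vert_{L^1}$.
\item Splitting $K$ into positive parts is impossible in general, since bounded operators between Banach lattices need not be regular. It is also unnecessary: prove admissibility of the positive operator $\check B:=(\lambda-\check{\calA}_{-1})D_\lambda(\mathcal{I}-PD_\lambda)^{-1}$ for $Y$-valued inputs, then compose with $K$.
\end{itemize}

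The missing idea is one line away from your Step 1. The positive Neumann series you already wrote down gives
\[
R(\lambda,\check{\calA})=R(\lambda,A)+D_\lambda\sum_{n=0}^{\infty}(PD_\lambda)^nPR(\lambda,A)\ \ge\ R(\lambda,A)\ \ge\ 0,\qquad \lambda\ge\lambda_1 .
\]
Monotonicity of the lattice norm then yields $\Vert R(\lambda,\check{\calA})x\Vert\ge\Vert R(\lambda,A)x\Vert\ge c\Vert x\Vert$ for all $x\in X_+$ and $\lambda\ge\max\{\lambda_0,\lambda_1\}$. With this inverse estimate and $R(\lambda,\check{\calA}_{-1})\check B=D_\lambda(\mathcal{I}-PD_\lambda)^{-1}\ge 0$, \cite[Thm.~2.7]{ELLC} gives $L^1$-admissibility. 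The core estimate is
\[
c\Vert\Phi_tv\Vert\le\Vert R(\lambda,\check{\calA})\Phi_tv\Vert\le\sup_{s\le t}\Vert\check T(s)\Vert\,\Vert D_\lambda(\mathcal{I}-PD_\lambda)^{-1}\Vert\,\Vert v\Vert_{L^1(0,t;Y)}
\]
for $v\ge 0$, extended to all $v$ via $\vert\Phi_tv\vert\le\Phi_t\vert v\vert$. H\"older then gives zero-class $L^p$-admissibility for $p\in(1,\infty]$, and \cite[Prop.~2.5]{JaNPS} gives continuity of $z$ and the bound with $C_{\tau,p}$. This is precisely the paper's argument.

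Your way of placing $\operatorname{ran}B$ inside $X_{-1,\check{\calA}}$, via $By\mapsto\check By$, is an acceptable and simpler substitute for Lemmas~\ref{technical-lem1}--\ref{echnical-lem4}. It amounts to treating \eqref{BVCP} directly as a boundary control system with $\Gamma=G-P$, which is surjective with Dirichlet operator $D_\lambda(\mathcal{I}-PD_\lambda)^{-1}$. It also agrees with the paper's $J^A$ by \eqref{J-expression}. State it as an explicit definition, well defined because $B$ is injective and $\operatorname{ran}B\cap X=\{0\}$, rather than as an identity ``checked on $\ker(\lambda-\tilde A)$''.
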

	
	\begin{remark}\label{Remark-on-well-posedness}
	 The resolvent equation implies that the family $(D_\lambda)_{\lambda>s(A)}$ is positive and monotonically decreasing. Therefore, the assumption ${D_\lambda \ge 0}$  {for all sufficiently large $\lambda\in \R$} is equivalent to $D_\lambda \ge 0$ for all $\lambda > s(A)$, and consequently $B$ is positive;  see \cite[Prop. 4.3]{SGPS} (see also \cite[Rem. 2.2]{BJVW}). Moreover, since $P$ is positive, $D_{\lambda_1}$ is positive, and $\operatorname{ran} D_{\lambda_1} \subseteq \bm{D}(\tilde{A})$ (cf.~\eqref{decomposition}), it follows from \cite[Thm.~II.5.3]{Schaf} that $P D_{\lambda_1} \in \mathcal{L}(Y)$. Therefore, the spectral radius $r(P D_{\lambda_1}) < 1$ in assumption {\bf (A3)} is well-defined.

	\end{remark}
	
	The following theorem is the second main result of this paper, providing a spectral characterization on the small-gain condition for the  exponential ISS of system \eqref{BVCP} together with an explicit ISS estimate.
	
	\begin{theorem}\label{Main2}
		Let the assumptions of Theorem \ref{Main1} be satisfied. Then, system \eqref{BVCP} is exponentially $L^p$-ISS if and only if
		\begin{align}\label{characterization}
			s(A) < 0 \quad \text{and} \quad r(P D_0) < 1.
		\end{align}
		Moreover, if $\Vert P D_0\Vert<1$, then there exist positive constants $N,a $, which  depend on the exponential decay rate of  the unforced  dynamic (i.e., system \eqref{BVCP} with $u \equiv 0$), such that
		\begin{align}\label{ISS-estimate}
			\Vert z(t,z_0,u)\Vert_X \le  N e^{-a t}\Vert z_0 \Vert_X + \frac{\Vert K \Vert \Vert D_0 \Vert}{1-\Vert P D_0 \Vert} \check{C}_p \Vert u \Vert_{L^p(\R_+; U)},
		\end{align}
		for all $t \ge 0$, $z_0 \in X$, and $u \in L^p(\R_+; U)$, where
		\begin{align*}
			\check{C}_p := \begin{cases}
				\frac{N}{c}, & p = 1 \\
				\frac{N}{c} \left( \frac{p-1}{p a} \right)^{\frac{p-1}{p}}, & p\in(1,\infty) \\
				\frac{N}{a c}  , & p=\infty
			\end{cases}
		\end{align*}
		%with $N, a > 0$ depending on the exponential decay rate of {the unforced  dynamic (i.e., system \eqref{BVCP} with $u \equiv 0$)}, and
		is  a positive constant  with $c$ given by \eqref{inverse2}.
	\end{theorem}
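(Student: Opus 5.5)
We will reduce Theorem~\ref{Main2} to the ISS characterization of Remark~\ref{Remark-ISS} applied to the perturbed generator. From the proof of Theorem~\ref{Main1} we take the following facts as available. The operator $\check{\calA}$ in \eqref{Def.checkA} generates a positive $C_0$-semigroup $\check T$ on $X$. For $\lambda>s(A)$ with $r(PD_\lambda)<1$ we have $\lambda\in\rho(\check{\calA})$ and
\begin{align*}
R(\lambda,\check{\calA}) = R(\lambda,A) + D_\lambda(\mathcal{I}-PD_\lambda)^{-1}PR(\lambda,A).
\end{align*}
Finally, the input map of $(\check{\calA},\calB)$ is well defined, with $\calB$ acting through $D_\lambda(\mathcal{I}-PD_\lambda)^{-1}K$.

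\textbf{Sufficiency.} Assume \eqref{characterization}. Since $\lambda\mapsto PD_\lambda$ is positive and decreasing on $(s(A),\infty)$, the map $\lambda\mapsto r(PD_\lambda)$ is decreasing. Continuity of the resolvent then gives $r(PD_\lambda)<1$ on $[-\varepsilon,\infty)$ for some $\varepsilon>0$ with $-\varepsilon>s(A)$. By the Neumann series, the resolvent formula above is positive there, so $(-\varepsilon,\infty)\subset\rho(\check{\calA})$ and $s(\check{\calA})\le-\varepsilon<0$. This uses the standard fact that for resolvent positive operators, $s$ is the infimum of the real $\mu$ beyond which the resolvent exists and is positive.

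We then pass from $s(\check{\calA})<0$ to $\omega_0(\check T)<0$. For positive semigroups we only have $s=\omega_1$, so we cannot quote the $L^p$ spectrum-determined-growth result. Instead we will use condition \eqref{inverse2} to obtain admissibility-type control, in the style of the resolvent-based criteria of \cite{ELLC}. The input is $\check{\calB}=D(\cdot)$-based and positive. For positive systems, $\int_0^\infty \check T(s)\check{\calB}v\,ds$ for $v\ge0$ is bounded by $R(0,\check{\calA})\check{\calB}v = D_0(\mathcal{I}-PD_0)^{-1}Kv$, which is an element of $X$. Hence we obtain infinite-time $L^1$-admissibility with $\kappa\lesssim\|D_0\|\|(\mathcal{I}-PD_0)^{-1}\|\|K\|$. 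Combined with \eqref{inverse2}, which lets us compare $\|\check T(t)x\|$ with $\|R(\lambda,\check{\calA})\check T(t)x\|$ on the cone, this upgrades to uniform exponential stability. Writing $x=x^+-x^-$ handles general $x$. Concretely we will show $\|\check T(t)\|\le Ne^{-at}$.

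\textbf{ISS estimate.} The estimate then follows from $z(t)=\check T(t)z_0+\check\Phi_t u$. We bound $\|\check\Phi_t u\|\le \int_0^t \|\check T(t-s)\check{\calB}u(s)\|ds$, using $\|\check T(r)D_0y\|\le c^{-1}\|R(\lambda_0,\check{\calA})\check T(r)\cdots\|\le (N/c)e^{-ar}\|\cdot\|$ together with $\|(\mathcal{I}-PD_0)^{-1}\|\le(1-\|PD_0\|)^{-1}$. Hölder's inequality against $e^{-ar}$ then yields the constants $\check C_p$: for $p=\infty$ we get $1/a$, for $p\in(1,\infty)$ we get $((p-1)/(pa))^{(p-1)/p}$, and for $p=1$ we get $1$. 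Continuity in $X$ comes from Theorem~\ref{Main1}.

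\textbf{Necessity.} Assume the system is exponentially $L^p$-ISS. Then $\omega_0(\check T)<0$, so $R(0,\check{\calA})\ge0$ exists. Positivity gives $0\le R(\lambda,A)\le R(\lambda,\check{\calA})$ for large $\lambda$, and this domination persists down to $\lambda=0$. It yields $s(A)\le s(\check{\calA})<0$. Next, $(\mathcal{I}-PD_\lambda)^{-1}$ exists and is positive for $\lambda>s(\check{\calA})$, and is bounded uniformly as $\lambda\downarrow0$ because $PR(0,\check{\calA})$-type expressions are bounded. For positive operators, the existence of a positive inverse of $\mathcal{I}-PD_0$ forces $r(PD_0)<1$.

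\textbf{Main obstacle.} The step we expect to be hardest is upgrading $s(\check{\calA})<0$ to $\omega_0(\check T)<0$ with explicit constants $N,a$. If the $c$-trick fails, our fallback is a Datko--Pazy type argument. Infinite-time admissibility gives $\int_0^\infty\|\check T(s)R(\lambda,\check{\calA})x\|ds<\infty$ for positive $x$, and \eqref{inverse2} converts this into $\int_0^\infty\|\check T(s)x\|ds\le C\|x\|$.
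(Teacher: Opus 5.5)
Your reduction and the purely spectral steps are fine. Getting $s(\check{\calA})<0$ from the monotonicity and upper semicontinuity of $\la\mapsto r(PD_\la)$ plus the positive Neumann-series resolvent works, and so does the necessity argument via $0\le R(\la,A)\le R(\la,\check{\calA})$ and the fact that a positive $S$ with $(\mathcal{I}-S)^{-1}\ge 0$ has $r(S)<1$. Together these are essentially a hands-on version of what the paper takes from \cite[Thm.~2]{El1}.

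The first genuine gap is the upgrade from $s(\check{\calA})<0$ to $\omega_0(\check{T})<0$. You route it through infinite-time $L^1$-admissibility of $\calB$. Your bound $\int_0^\infty \check{T}_{-1}(s)\calB v\,ds\le R(0,\check{\calA}_{-1})\calB v$ only covers constant inputs $v\in U_+$, not the convolution $\int_0^t\check{T}_{-1}(t-s)\calB v(s)\,ds$. More fundamentally, admissibility of $\calB$ carries no information about $\Vert\check{T}(t)x\Vert$ for a general $x\in X$, so it cannot produce uniform exponential stability. Your fallback claim that it yields $\int_0^\infty\Vert\check{T}(s)R(\la,\check{\calA})x\Vert\,ds<\infty$ for all $x\ge 0$ is likewise unfounded: on a general Banach lattice, even convergence of $\int_0^\infty\check{T}(s)y\,ds$ for $y\ge 0$ does not control the integral of the norms. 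The decay on $\operatorname{ran}R(\la,\check{\calA})$ that your cone comparison needs comes instead from a fact you quote yourself, $\omega_1(\check{T})=s(\check{\calA})<0$ for positive semigroups on Banach lattices. For $x\ge 0$ and $\la>\max\{\la_0,\la_1\}$, \eqref{Inverse-F} gives $c\Vert\check{T}(t)x\Vert\le\Vert R(\la,\check{\calA})\check{T}(t)x\Vert=\Vert\check{T}(t)R(\la,\check{\calA})x\Vert\le Me^{-\delta t}\Vert x\Vert$, and $\vert\check{T}(t)x\vert\le\check{T}(t)\vert x\vert$ handles general $x$. Admissibility plays no role in this step. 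It is already supplied by Theorem~\ref{Main1}, so \cite[Prop.~2.10]{JaNPS} reduces the equivalence to $\omega_0(\calT)<0$, which the paper obtains from the spectrum-determined growth of $\calT$ established in \cite{El1}.

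The second gap is that the derivation of \eqref{ISS-estimate} breaks down as written. The bound $\Vert\check{\Phi}_t u\Vert\le\int_0^t\Vert\check{T}_{-1}(t-s)\calB u(s)\Vert\,ds$ is not available. The element $\check{T}_{-1}(t-s)\calB u(s)$ lies in the extrapolation space and in general is not in $X$: recall $\operatorname{ran}B\cap X=\{0\}$, and for the hyperbolic network of Section~\ref{Sec:4} the semigroup does not smooth this out, at least for small $t-s$. So neither the integral of norms nor a pointwise use of the inverse estimate makes sense. Your chain through $\Vert\check{T}(r)D_0y\Vert$ effectively replaces $\calB$ by the bounded operator $R(0,\check{\calA}_{-1})\calB=D_0(\mathcal{I}-PD_0)^{-1}K$. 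That drops exactly the factor $-\check{\calA}_{-1}$ which pushes the integrand out of $X$.

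The order has to be reversed, as in Lemma~\ref{Lem-admi}. For $u\ge 0$ one has $\check{\Phi}_t u\in X_+$, hence $c\Vert\check{\Phi}_t u\Vert\le\Vert R(\la,\check{\calA})\check{\Phi}_t u\Vert=\Vert\int_0^t\check{T}(t-s)R(\la,\check{\calA}_{-1})\calB u(s)\,ds\Vert$. The integrand is now $X$-valued, so the bound $\Vert\check{T}(t)\Vert\le Ne^{-at}$ and H\"older's inequality produce $\check{C}_p$. Then $\vert\check{\Phi}_t u\vert\le\check{\Phi}_t\vert u\vert$ extends this to all $u$. Since the inverse estimate is only known for large $\la$, you also need the monotonicity $0\le R(\la,\check{\calA}_{-1})B\le R(0,\check{\calA}_{-1})B=D_0(\mathcal{I}-PD_0)^{-1}$ for $\la\ge 0$. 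This is what yields the factor $\Vert K\Vert\Vert D_0\Vert/(1-\Vert PD_0\Vert)$, and it is how the paper concludes.
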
	
	
	    \begin{remark}
        Theorem~\ref{Main2} provides a robustness result for infinite-time admissibility. Specifically, if $A$ satisfies the resolvent inverse estimate \eqref{inverse2} on the positive cone, and if the internal dynamics are positive and satisfy the small-gain condition $r(P D_0) < 1$, then the pair $(\mathcal{A},B)$ is infinite-time $L^p$-admissible. This shows that the system's admissibility property is robust under the influence of the boundary operator $P$ when the small-gain condition holds.
        \end{remark}
	
	\section{Proof of the main results}\label{Sec:3}
	This section presents detailed proofs of our main results. Let $X$, $Y$, and $U$ be Banach lattices; in particular, $X$ is equipped with its natural positive cone $X_+$. Extending the notion of positivity to the extrapolation space $X_{-1,A}$ requires careful consideration. Since $A := \tilde{A}|_{\ker G}$ satisfies assumption {\bf (A1)}, it follows from \cite[Thm. 2.5]{Arendt} that $A$ generates a positive $C_0$-semigroup $T$ on $X$. Following \cite[Def. 2.1]{BJVW}, we define the positive cone in $X_{-1,A}$, denoted by $(X_{-1,A})_+$, as the closure of $X_+$ in the norm $\|\cdot\|_{-1,A}$. This ensures that $X_+ \subset (X_{-1,A})_+$. Moreover, if $X$ is a real Banach lattice, then by \cite[Prop. 2.3]{BJVW} we have $	X_+ = X \cap (X_{-1,A})_+$. Further details and related properties can be found in \cite[Sect. 2.2]{SGPS}.
	
	\subsection{Proof of Theorem \ref{Main1}}
	To the boundary control problem \eqref{BVCP}, we associate the operator $\calA : {\bm D}(\calA) \subset X \to X$ defined by
	\begin{align}\label{def:A}
		\calA x := \tilde{A}x, \qquad x \in {\bm D}(\calA) := \left\{x \in {\bm D}(\tilde{A}) : \ Gx = Px\right\}.
	\end{align}
	Notice that $\mathcal{A}$ governs the unforced dynamics of the system \eqref{BVCP}. Under assumptions {\bf (A1)}--{\bf (A3)}, it follows from \cite[Thm. 1]{El1} that $\calA$ generates a positive semigroup $\calT:=(\calT(t))_{t\ge 0}$ on $X$, given by
	\begin{align*}%\label{variation}
		\calT(t)x = T(t)x + \int_{0}^{t} T_{-1}(t-s) B P \calT(s)x  {\rm{d}}s, \quad \forall t\ge 0, x \in {\bm D}(\calA),
	\end{align*}
	 where $B$ is the boundary control operator defined in \eqref{boundary-control}. Moreover, $\mathcal{T}$ satisfies the spectrum determined growth property. Furthermore, for any $\la>s(A)$, we have
	\begin{align*}
		\lambda>s(\calA)\iff r(PD_\la)<1,
	\end{align*}
    and the resolvent operator of $\mathcal{A}$ can be expressed as
 	\begin{align}\label{resolvent-operator}
		R(\lambda, \calA) = (\mathcal{I}-D_\lambda P)^{-1} R(\lambda, A)
		= R(\lambda, A) + D_\lambda (\mathcal{I}-P D_\lambda)^{-1} P R(\lambda, A).
	\end{align}
	According to \cite[Prop.~2.5]{JaNPS}, to prove Theorem~\ref{Main1}, it suffices to show that $\mathcal{A}$ coincides with $\check{\mathcal{A}}$ defined in \eqref{Def.checkA}, and that the pair $(\mathcal{A}, \mathcal{B})$ is zero-class $L^p$-admissible. To this end, we first identify a subspace of $X_{-1,\mathcal{A}}$ with a subspace of $X_{-1,A}$ containing $\operatorname{ran} B$. Inspired by \cite[Sect.~7]{WF}, we define the operator $J^A: {\bm D}(J^A) \subset X_{-1,A} \to X_{-1,\mathcal{A}}$ by
\begin{subequations}
	\begin{align}
		 J^Ax:=&\lim_{\la \to +\infty} \Vert \la R(\la,A)x\Vert_{-1,\calA},\label{extension-identity1}\\
		 {\bm D}(J^A):=&\{x\in X_{-1,A}:\text{ the limit in } \eqref{extension-identity1}\; {\rm exists}\}\label{extension-identity2}.
	\end{align}
\end{subequations}
	Notice that $J^A$ is an extension of the identity on $X$:
	\begin{align}\label{Tool}
		J^Ax=x,\qquad \forall x\in X.
	\end{align}
	%In what follows, let $X_1$ denote ${\bm D}(A)$ endowed with the graph norm, and let $Z$ denote ${\bm D}(\tilde{A})$ endowed with the norm
    Next, let $X_1 := {\bm D}(A)$ with the graph norm, and define the Banach space $Z := {\bm D}(\tilde{A})$ equipped with the norm
	\begin{align*}
		\Vert z\Vert_{Z}^2:=\inf\left\{\Vert x\Vert^2+\Vert y\Vert^2:
		x\in {\bm D}(A),y\in Y,z=x+R(\la,A_{-1}) B y \right\}.
	\end{align*}
	Then, $X_1\subset Z\subset X$ with continuous embeddings.  We note that the space $Z$ and its topology are independent of the choice of $\la$. Moreover, we have the following result.
	\begin{lemma}\label{P-regularity}
		Let $A := \tilde{A}|_{\ker G}$ be the generator of a positive $C_0$-semigroup $T$ on $X$ and $P \ge 0$. Suppose assumptions {\bf (A2)} and {\bf (A4)} are satisfied. Then
		\begin{align}\label{Limit-Px}
			\lim_{\la \to +\infty} \Vert \la P R(\la,A)x-Px\Vert_Y=0, \qquad \forall x\in Z.
		\end{align}
	\end{lemma}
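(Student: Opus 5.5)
The plan is to split an arbitrary $x\in Z$ according to the definition of the norm of $Z$. Fix $\mu>s(A)$ large enough that $D_\mu\ge 0$. By the definition of $B$, we have $R(\mu,A_{-1})By=D_\mu y$. Hence every $x\in Z$ can be written as $x=x_1+D_\mu y$ with $x_1\in{\bm D}(A)$ and $y\in Y$. Both $\lambda PR(\lambda,A)$ and $P$ are linear, so it suffices to prove \eqref{Limit-Px} separately for $x=x_1$ and for $x=D_\mu y$. The basic tools are the resolvent-type identities, valid for $\lambda>\mu$:
\begin{align*}
\lambda R(\lambda,A)R(\mu,A)w=\tfrac{\lambda}{\lambda-\mu}\bigl(R(\mu,A)w-R(\lambda,A)w\bigr),\qquad
\lambda R(\lambda,A)D_\mu y=\tfrac{\lambda}{\lambda-\mu}\bigl(D_\mu y-D_\lambda y\bigr).
\end{align*}
The second identity follows from $(\lambda-\tilde A)(D_\mu y-D_\lambda y)=(\lambda-\mu)D_\mu y$ and $G(D_\mu y-D_\lambda y)=0$. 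Applying $P$ and using $\lambda/(\lambda-\mu)\to1$, the claim reduces to two facts: $\|PR(\lambda,A)w\|_Y\to0$ for every $w\in X$, and $\|PD_\lambda y\|_Y\to0$ for every $y\in Y$, as $\lambda\to+\infty$.

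\emph{The resolvent part.} First, $PR(\lambda,A)\colon X\to Y$ is a positive operator between Banach lattices, so it is bounded by \cite[Thm.~II.5.3]{Schaf}. The resolvent equation and positivity give $0\le R(\lambda,A)\le R(\lambda_0,A)$ for $\lambda\ge\lambda_0$. Therefore $|PR(\lambda,A)w|\le PR(\lambda,A)|w|\le PR(\lambda_0,A)|w|$, and so $\sup_{\lambda\ge\lambda_0}\|PR(\lambda,A)\|\le\|PR(\lambda_0,A)\|$. Next, for $w\in{\bm D}(A)$ I would write $PR(\lambda,A)w=PR(\lambda_0,A)\,R(\lambda,A)(\lambda_0-A)w$. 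The Hille--Yosida bound $\|R(\lambda,A)\|\le M/(\lambda-\omega)$ then shows this tends to $0$. Since ${\bm D}(A)$ is dense and the family is uniformly bounded, an $\varepsilon/3$ argument gives strong convergence to $0$ on all of $X$.

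\emph{The Dirichlet part.} This is the step I expect to be the main obstacle, because {\bf (A4)} only provides weak convergence of $PD_\lambda y$ to $0$. I would split $y=y^+-y^-$, so it suffices to take $y\ge0$. By Remark~\ref{Remark-on-well-posedness}, $(D_\lambda)$ is positive and decreasing in $\lambda$, and $P\ge0$. Hence $(PD_\lambda y)_\lambda$ is a decreasing net in $Y_+$ that converges weakly to $0$ by {\bf (A4)}. A Dini-type theorem for Banach lattices states that a monotone net converging weakly converges in norm; see, e.g., \cite{CHARALAMBOS} or \cite{Schaf}. Applying it yields $\|PD_\lambda y\|_Y\to0$.

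Combining the two parts gives $\lambda PR(\lambda,A)x_1\to Px_1$ and $\lambda PR(\lambda,A)D_\mu y\to PD_\mu y$ in $Y$. Adding these limits proves \eqref{Limit-Px} for every $x\in Z$.
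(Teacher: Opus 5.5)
Your proof is correct and follows essentially the paper's route. You use the same splitting $x=x_1+D_\mu y$, the same identity $\lambda PR(\lambda,A)D_\mu y=\frac{\lambda}{\lambda-\mu}(PD_\mu y-PD_\lambda y)$, and the same Dini-type theorem for Banach lattices, applied to the decreasing, weakly null family $(PD_\lambda y)$ supplied by {\bf (A4)}. The differences are minor. You work with the continuous parameter $\lambda$ (a net) rather than the sequence $n$, which matches the statement exactly. For the ${\bm D}(A)$-component you obtain $PR(\lambda,A)\to 0$ strongly from the uniform bound by the positive, hence bounded, operator $PR(\lambda_0,A)$ together with density. The paper instead uses boundedness of $P$ on $X_1$ and the convergence $nR(n,A)x\to x$ in the graph norm.
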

	\begin{proof}
		For $n \in \N$ large enough ($n>\omega_0(T)$), let us introduce the Yosida-like approximations $P_n \in \mathcal{L}(X,Y)$ defined by
		\begin{align*}
			P_n x:=P n R(n,A)x,\qquad x\in X.
		\end{align*}
		Let $y\in Y_+$. Since $A$ is a resolvent positive operator and $P,D_\la$ are positive operators, it follows from the resolvent equation and assumption {\bf (A4)} that the sequence $(P D_{n}y)$ decreases monotonically and weakly converges to $0$. Applying Dini's theorem for Banach lattices (see, e.g., \cite[Prop. 10.9]{BFR}), we obtain
		\begin{align}\label{regularity}
			\lim_{n\to +\infty } \Vert P D_{n}y\Vert_Y=0, \quad \forall y\in Y_+,
		\end{align}
		and hence for all $y\in Y$.
        %Now recall that
	%	\begin{align}\label{Eq1}
	%		&\lim_{n\to +\infty} \Vert n R(n,A)x-x\Vert_{X_1}=0,\qquad \forall x\in X_1.%\label{Eq1}\\
			%& \lim_{n\to +\infty} \Vert n R(n,A)x-x\Vert_{X}=0,\qquad\; \forall x\in X,\label{Eq2}
	%	\end{align}
        %see, e.g., \cite[Lem. II.3.4]{EN}.
        Using the identity
		\begin{align*}
			nP R(n,A) D_\la =\frac{1}{1-\frac{\la}{n}}(P D_\la-P D_n),\qquad (\rho(A)\ni\la\neq n),
		\end{align*}
		it follows from \eqref{regularity} that
		\begin{align}\label{nGamma}
			\lim_{n \to +\infty} \Vert n P R(n,A) D_\la y-P D_\la y \Vert_{Y}= 0, \quad \forall y\in Y.
		\end{align}
		Now, for any $z\in Z$, we know from \eqref{decomposition} that there exist $x\in {\bm D}(A)$ and $y\in Y$ such that $z=x+D_{\la}y$ for some $\lambda\in\rho(A)$. Therefore,
		\begin{align*}
			\Vert P_n z-P z\Vert_{Y}%&\le \Vert P_n x-P x\Vert_{Y}+\Vert P_n D_{\la} y-P D_\la y\Vert_{Y}\\
			%&\le \Vert P\Vert_{\calL(Z, Y)} \Vert n R(n,A)x- x\Vert_{Z}\\&+ \Vert n P R(n,A) D_\la y-P D_\la y\Vert_{Y}\\
			&\le \Vert P\Vert_{\calL(Z,Y)} \Vert n R(n,A)x- x\Vert_{X_1}+\Vert n P R(n,A) D_\la y-P D_\la y\Vert_{Y}.
		\end{align*}
		Thus, passing to the limit as $n\to +\infty$ and taking into account \eqref{nGamma}, we obtain \eqref{Limit-Px}. For more details we refer to step 1 in the proof of \cite[Thm. 2]{Yassine}.
	\end{proof}

	Let $Z_{-1,A}$ denote the completion of $Z$ with respect to the norm
	\begin{align}\label{space-Z}
		\Vert x\Vert_{Z_{-1,A}}:=\Vert R(s,A)x\Vert_{Z},\qquad x\in Z,
	\end{align}
	for some $s\in \rho(A)$. Then, $X \subset Z_{-1,A} \subset X_{-1,A}$  with continuous embedding. Similarly, we define $Z_{-1,\calA}$ as the analogue of $Z_{-1,A}$ for $\calA$, equipped with the norm $\Vert \cdot\Vert_{Z_{-1,\calA}}$ given by a formula analogous to \eqref{space-Z}. We have $X \subset Z_{-1,\calA} \subset X_{-1,\calA}$ with continuous embeddings.
	
	We have the following result.
	\begin{lemma}\label{technical-lem1}
		Consider the setting of Theorem \ref{Main1}. Then, $Z_{-1,A}\subset {\bm D}(J^A)$ and
		\begin{align}\label{boundness-J}
			J^A\in \calL(Z_{-1,A},Z_{-1,\calA}).
		\end{align}
	\end{lemma}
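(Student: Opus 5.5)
The plan is to reduce everything to the identity $R(s,\calA)=(\mathcal{I}-D_sP)^{-1}R(s,A)$ from \eqref{resolvent-operator}. I would fix $s>s(A)$ large enough that $r(PD_s)<1$. This is possible because $(PD_\la)_{\la>s(A)}$ is positive and monotonically decreasing (Remark~\ref{Remark-on-well-posedness}) and $r(PD_{\la_1})<1$ by {\bf (A3)}. I would use this same $s$ in the norms of $Z_{-1,A}$ and $Z_{-1,\calA}$.

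\emph{Step 1: an explicit formula on $X$.} Every $x\in X$ can be written as $x=(s-A_{-1})z$ with $z:=R(s,A_{-1})x=R(s,A)x\in{\bm D}(A)\subset Z$, and $\Vert x\Vert_{Z_{-1,A}}=\Vert z\Vert_Z$. For $\la>s(A)$ the vector $w_\la:=\la R(\la,A)z$ lies in ${\bm D}(A)$, and $\la R(\la,A)x=(s-A)w_\la$. Applying \eqref{resolvent-operator} gives
\begin{align*}
R(s,\calA)\,\la R(\la,A)x=(\mathcal{I}-D_sP)^{-1}w_\la
= w_\la + D_s(\mathcal{I}-PD_s)^{-1}P w_\la .
\end{align*}
Next I would note that $D_s\in\calL(Y,Z)$, since $\Vert D_sy\Vert_Z\le\Vert y\Vert$ by the definition of $\Vert\cdot\Vert_Z$ with $x=0$. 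I would also note that $P\in\calL(Z,Y)$, by positivity and \cite[Thm.~II.5.3]{Schaf}, exactly as in Remark~\ref{Remark-on-well-posedness}. Hence $(\mathcal{I}-D_sP)^{-1}\in\calL(Z)$, and
\begin{align*}
\Vert \la R(\la,A)x\Vert_{Z_{-1,\calA}}\le C\,\Vert \la R(\la,A)z\Vert_Z .
\end{align*}

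\emph{Step 2: behaviour of $\la R(\la,A)z$ for $z\in Z$.} Write $z=x_0+D_sy$ with $x_0\in{\bm D}(A)$ and $y\in Y$. For the first part, $\la R(\la,A)x_0\to x_0$ in $X_1$, and hence in $Z$. For the second part, the resolvent identity gives $\la R(\la,A)D_sy=\frac{\la}{\la-s}(D_sy-D_\la y)$. The correction term $D_sPw_\la$-type pieces are harmless: Lemma~\ref{P-regularity} gives $P\la R(\la,A)z\to Pz$ in $Y$, so $D_s(\mathcal{I}-PD_s)^{-1}Pw_\la$ converges in $Z$. What remains is to show that $D_\la y\to 0$ in $Z$, or at least that $D_\la y$ stays bounded in $Z$ and tends to $0$ in a sense sufficient for the limit \eqref{extension-identity1}.

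\emph{Step 3: conclude by density.} The estimate in Step 1, combined with uniform boundedness of $\la R(\la,A)$ on $Z$ from Step 2, gives a bound $\Vert\la R(\la,A)x\Vert_{Z_{-1,\calA}}\le C'\Vert x\Vert_{Z_{-1,A}}$ for $x\in X$ and large $\la$. The same formula shows that the net is Cauchy in $Z_{-1,\calA}$. Since $X$ is dense in $Z_{-1,A}$, I would extend by continuity through a standard $\varepsilon/3$ argument. The limit then exists for every $x\in Z_{-1,A}$, so $Z_{-1,A}\subset{\bm D}(J^A)$. Moreover $\Vert J^Ax\Vert_{Z_{-1,\calA}}\le C'\Vert x\Vert_{Z_{-1,A}}$, which is \eqref{boundness-J}, and by \eqref{Tool} the extension is consistent on $X$.

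\emph{Main obstacle.} The hard part is the $Z$-control of $D_\la y$ in Step 2. The naive decomposition $D_\la y=D_sy+(s-\la)R(\la,A)D_sy$ puts a term of size roughly $\la\Vert R(\la,A)\Vert$ into the $X_1$ graph norm, so it gives no useful bound. I would first try to exploit the fact that the topology of $Z$ does not depend on the parameter in the decomposition. Taking the decomposition at $\la$ itself gives $\Vert D_\la y\Vert_Z\lesssim\Vert y\Vert$, uniformly in $\la$ up to the equivalence constants, which I would control via positivity, monotonicity of $D_\la$ and the cone estimate \eqref{inverse2}. If that fails, I would avoid $Z$-convergence of $D_\la y$ altogether. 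Instead I would apply $R(s,\calA)$ to the difference and use the decay $PD_\la y\to0$ in $Y$, given by \eqref{regularity}, together with $D_\la y\to0$ in $X$. This would show directly that the $D_\la y$ contribution vanishes in $Z_{-1,\calA}$.
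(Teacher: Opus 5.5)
\paragraph{The gap: density fails.}
The gap is in Step~3, which rests on ``$X$ is dense in $Z_{-1,A}$''. This fails in exactly the situation the lemma is about.

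For the construction to make sense, $Z_{-1,A}$ has to be $(s\mathcal{I}-A_{-1})Z$, normed by $\Vert R(s,A_{-1})\cdot\Vert_Z$. That is how it contains $By=(s\mathcal{I}-A_{-1})D_sy$, and it is why the paper separately introduces $W_{-1,A}$ as the closure of $X$ in it.

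Since ${\bm D}(\tilde A)={\bm D}(A)\oplus\ker(s\mathcal{I}-\tilde A)$ is direct, the decomposition $z=x+D_sy$ in the definition of $\Vert\cdot\Vert_Z$ is unique, with $y=Gz$. Hence $\Vert Gz\Vert_Y\le\Vert z\Vert_Z$. Because $G$ vanishes on ${\bm D}(A)$, this gives $\Vert By-x\Vert_{Z_{-1,A}}=\Vert D_sy-R(s,A)x\Vert_Z\ge\Vert y\Vert_Y$ for every $x\in X$. So $By$ is not in the closure of $X$ in $Z_{-1,A}$ whenever $y\neq 0$.

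On $X$ itself your argument is trivial: $R(s,A)x\in{\bm D}(A)$, so the $D_sy$-part of Step~2 never occurs there. The density step therefore discards precisely the points where the lemma has content.

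\paragraph{Step~2 aims at a false target.}
The same inequality shows that $D_\la y\to0$ in $Z$ cannot hold: $\Vert D_\la y\Vert_Z\ge\Vert y\Vert_Y$. More generally, $\Vert\la R(\la,A)w-w\Vert_Z\ge\Vert Gw\Vert_Y$ for $w\in Z$.

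Consequently, when $GR(s,A_{-1})z\neq0$, the net $\la R(\la,A_{-1})z$ does not converge in $Z_{-1,\calA}$ at all. To see this, apply $G$ to $R(s,\calA)\bigl(\la R(\la,A_{-1})z-J^Az\bigr)$; it tends to $-GR(s,A_{-1})z$. Likewise, in your fallback the $D_\la y$-contribution vanishes in $X_{-1,\calA}$, not in $Z_{-1,\calA}$.

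\paragraph{The repair.}
The repair is the paper's proof: apply your Step~1 identity to every $z\in Z_{-1,A}$ directly, with no density argument and no $Z$-convergence.

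Put $x:=R(s,A_{-1})z\in Z$. Then \eqref{resolvent-operator} gives $R(s,\calA)\,\la R(\la,A_{-1})z=\la R(\la,A)x+D_s(\mathcal{I}-PD_s)^{-1}P\,\la R(\la,A)x$.

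Membership in ${\bm D}(J^A)$ only requires convergence in $X_{-1,\calA}$, i.e.\ convergence of this expression in $X$. That follows from two facts:
\begin{itemize}
\item $\la R(\la,A)x\to x$ in $X$;
\item Lemma~\ref{P-regularity}, which yields $P\la R(\la,A)x\to Px$ in $Y$ because $x\in Z$. This is the only place where $PD_\la y\to0$ enters.
\end{itemize}

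The bound is then read off from the limit, not from uniform estimates on the net. One has $R(s,\calA)J^Az=x+D_s(\mathcal{I}-PD_s)^{-1}Px\in Z$. Using $D_s\in\calL(Y,Z)$ and $P\in\calL(Z,Y)$, which you already have, this gives $\Vert J^Az\Vert_{Z_{-1,\calA}}\le\bigl(1+\Vert D_s(\mathcal{I}-PD_s)^{-1}P\Vert_{\calL(Z)}\bigr)\Vert z\Vert_{Z_{-1,A}}$. No control of $\Vert D_\la y\Vert_Z$ is needed anywhere.
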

	
	\begin{proof}
		Let $z\in Z_{-1,A}$. According to \eqref{extension-identity2}, we need to show that the limit of $\la R(\la,A)z$ exists in $X_{-1,\calA}$ as $\la \to +\infty$. Take $ s>s(\calA)$. Using \eqref{resolvent-operator}, one has
		\begin{align*}
			R(s,\calA)\la R(\la,A)z&=(\mathcal{I}-D_s P)^{-1}R(s,A)\la R(\la,A)z\\
			&=R(s,A)\la R(\la,A)z+D_s (\mathcal{I}-P D_s )^{-1}P R(s,A)\la R(\la,A)z.
		\end{align*}
		Then, for $x=R(s,A)z$, we obtain
		\begin{align*}
			\lim_{\la \to +\infty}R(s,\calA)\la R(\la,A)z=\lim_{\la \to +\infty} \la R(\la,A)x
			+D_s (\mathcal{I}-P D_s )^{-1}\lim_{\la \to +\infty} P \la R(\la,A) x.
		\end{align*}
		The first limit on the right-hand side is $R(s,A)z$ (by \eqref{Tool}), while the second limit exists due to Lemma \ref{P-regularity} as $R(s,A)z\in Z$. Thus, $z\in {\bm D}(J^A)$, and
		\begin{align}\label{J-expression}
			R(s, \calA)J^Az=R(s,A)z+D_s (\mathcal{I}-P D_s )^{-1}P R(s,A)z.
		\end{align}
		Moreover, form \eqref{space-Z} and \eqref{J-expression}, we deduce
		\begin{align*}
			\Vert J^Az\Vert_{Z_{-1,\calA}}&=\Vert R(s, \calA)J^Az\Vert_{Z}\\
			&\le \Vert R(s,A)z\Vert_{Z}+ \Vert D_s (\mathcal{I}-P D_s )^{-1}P R(s,A)z\Vert_{Z}\\
			&\le \left(1+\Vert D_s(\mathcal{I}-P D_s )^{-1}P\Vert_{\calL(Z,Y)}\right)\Vert z\Vert_{Z_{-1,A}},
		\end{align*}
		for all $z\in Z_{-1,A}$. This proves $J^A\in \calL(Z_{-1,A},Z_{-1,\calA})$. The proof is complete.
	\end{proof}

	We now define the operator $J^\calA$ analogously to the definition of $J^A$ in \eqref{extension-identity1}-\eqref{extension-identity2}. More precisely, we have
%%\begin{subequations}
	\begin{align}
		 J^\calA x:=&\lim_{\la \to +\infty} \Vert \la R(\la,\calA)x\Vert_{-1,A},\label{definition-J'}\\
		  {\bm D}(J^\calA):=&\{x\in X_{-1,\calA}: \text{ the limit in \eqref{definition-J'}}\text{ exists in } X_{-1,A}\}. \notag
	\end{align}
%\end{subequations}
	Clearly, $J^\calA$ satisfies
	\begin{equation}\label{extension-identityJ'}
		J^\calA x=x,\qquad \forall x\in X.
	\end{equation}
	
In addition, the following statement holds true.
	\begin{lemma}\label{technical-lem2}
		Consider the setting of Theorem \ref{Main1}. Then, $Z_{-1,\calA}\subset {\bm D}(J^\calA)$ and
		\begin{align}\label{boundness-J'}
			J^\calA \in \calL(Z_{-1,\calA},Z_{-1,A}).
		\end{align}
	\end{lemma}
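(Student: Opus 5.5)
We mirror the proof of Lemma \ref{technical-lem1} with the roles of $A$ and $\calA$ interchanged. Given $z\in Z_{-1,\calA}$, we must show that $\la R(\la,\calA)z$ converges in $X_{-1,A}$ as $\la\to+\infty$, i.e.\ that $R(s,A)\la R(\la,\calA)z$ converges in $X$ for a fixed $s>\max\{s(A),s(\calA)\}$ with $r(PD_s)<1$. The key identity comes from inverting \eqref{resolvent-operator}. Since $R(s,\calA)=(\mathcal{I}-D_sP)^{-1}R(s,A)$, we get
\begin{align*}
R(s,A)=(\mathcal{I}-D_sP)R(s,\calA)=R(s,\calA)-D_sPR(s,\calA),
\end{align*}
where $PR(s,\calA)$ makes sense because $\operatorname{ran}R(s,\calA)={\bm D}(\calA)\subset Z$. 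Since $R(s,A)$ and $R(\la,\calA)$ do not commute, we instead compose with $R(s,A)$ on the left and commute resolvents of $\calA$:
\begin{align*}
R(s,A)\la R(\la,\calA)z=\la R(\la,\calA)x-D_sP\,\la R(\la,\calA)x,\qquad x:=R(s,\calA)z\in Z .
\end{align*}

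As $\la\to\infty$, the first term tends to $x$ in $X$, since $\calA$ generates the $C_0$-semigroup $\calT$. The second term requires an analogue of Lemma \ref{P-regularity} for $\calA$, namely
\begin{align*}
\lim_{\la\to\infty}\Vert P\la R(\la,\calA)x-Px\Vert_Y=0\qquad\text{for } x\in Z.
\end{align*}
To prove this, we expand using \eqref{resolvent-operator}:
\begin{align*}
P\la R(\la,\calA)x=P\la R(\la,A)x+PD_\la(\mathcal{I}-PD_\la)^{-1}P\la R(\la,A)x .
\end{align*}
By Lemma \ref{P-regularity}, $P\la R(\la,A)x\to Px$. By \eqref{regularity} together with monotonicity, $\Vert PD_\la\Vert$ is bounded and $PD_\la y\to0$ strongly. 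Moreover, $r(PD_\la)<1$ and $\Vert(\mathcal{I}-PD_\la)^{-1}\Vert$ is uniformly bounded for large $\la$: positivity and monotonicity of $\la\mapsto PD_\la$ give $0\le(\mathcal{I}-PD_\la)^{-1}=\sum_k(PD_\la)^k\le\sum_k(PD_{\la_1})^k$ for $\la\ge\la_1$, so the norm is dominated. Combining strong convergence to zero with uniform bounds and a convergent argument shows the correction term tends to $0$. Hence the limit equals $Px$, and
\begin{align*}
R(s,A)J^\calA z=x-D_sPx .
\end{align*}

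For boundedness, $\Vert J^\calA z\Vert_{Z_{-1,A}}=\Vert R(s,A)J^\calA z\Vert_Z\le(1+\Vert D_sP\Vert_{\calL(Z)})\Vert R(s,\calA)z\Vert_Z$. This is at most $C\Vert z\Vert_{Z_{-1,\calA}}$, provided $P\in\calL(Z,Y)$ (used already in Lemma \ref{P-regularity}) and $D_s\in\calL(Y,Z)$, which holds by definition of the $Z$-norm since $D_s=R(s,A_{-1})B$. Finally, because we proved convergence for elements $z$ of the completion directly, and the bound holds on the dense subspace $X$ where $J^\calA$ is the identity, the statement \eqref{boundness-J'} follows.

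The main obstacle is the $\calA$-version of Lemma \ref{P-regularity}, specifically controlling $PD_\la(\mathcal{I}-PD_\la)^{-1}P\la R(\la,A)x$. Strong convergence $PD_\la\to0$ must be combined with a uniform bound on $(\mathcal{I}-PD_\la)^{-1}$. I would obtain this bound from the Neumann series and positivity, using that $PD_\la\le PD_{\la_1}$ and $r(PD_{\la_1})<1$ (so the series converges in norm via $\Vert(PD_{\la_1})^k\Vert^{1/k}\to r<1$). Then I would write $(\mathcal{I}-PD_\la)^{-1}P\la R(\la,A)x=(\mathcal{I}-PD_\la)^{-1}Px+o(1)$, and show that $PD_\la(\mathcal{I}-PD_\la)^{-1}Px\to0$ by dominated convergence of the series terms $(PD_\la)^{k+1}Px$.
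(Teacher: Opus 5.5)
Your proposal is correct and follows the paper's proof essentially line by line. It uses the same identity $R(s,A)\lambda R(\lambda,\calA)z=\lambda R(\lambda,\calA)x-D_sP\,\lambda R(\lambda,\calA)x$ with $x=R(s,\calA)z$, the same $\calA$-analogue of Lemma~\ref{P-regularity} obtained by expanding via \eqref{resolvent-operator}, and the same bound $(1+\Vert D_sP\Vert)\Vert z\Vert_{Z_{-1,\calA}}$. Your Neumann-series domination $0\le (PD_\lambda)^k\le (PD_{\lambda_1})^k$ supplies the uniform bound on $(\mathcal{I}-PD_\lambda)^{-1}$ and the vanishing of the correction term, which the paper only asserts by citing \textbf{(A4)}. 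The closing density remark is unnecessary, because your bound already holds for every $z\in Z_{-1,\calA}$.
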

	\begin{proof}
		First, we establish that for all $x\in {\bm D}(\tilde{A})$,
		\begin{align}\label{regularity-P-wrt-calA}
			\lim_{\la \to +\infty}\la PR(\la,\calA)x=Px.
		\end{align}
		Let $x\in {\bm D}(\tilde{A})$. Using \eqref{resolvent-operator},  Lemma \ref{P-regularity}, and assumption {\bf (A4)}, we obtain
		\begin{equation*}
			\lim_{\la \to +\infty}\la PR(\la,\calA)x =\lim_{\la \to +\infty}\la PR(\la,A)x+\lim_{\la \to +\infty}  PD_\la (\mathcal{I}-PD_\la )^{-1}\la PR(\la,A)x
			=Px.
		\end{equation*}
		Now, let $z\in Z_{-1,\calA}$. By \eqref{definition-J'}, we must show that $\la R(\la,\calA)z$ converges in $X_{-1,A}$ as $\la \to +\infty$. Fix $ s>s(\calA)$. Applying \eqref{resolvent-operator}, we obtain
		\begin{align*}
			R(s,A)\la R(\la,\calA)z&=(R(s,\calA)-D_s PR(s,\calA))\la R(\la,\calA) z\\
			&=\la R(\la,\calA) R(s,\calA)z-D_s P\la R(\la,\calA)R(s,\calA)z.
		\end{align*}
		Taking the limit $\lambda \to +\infty$ and invoking \eqref{extension-identityJ'} and \eqref{regularity-P-wrt-calA}, we obtain
		\begin{align*}
			\lim_{\la \to +\infty}R(s,A)\la R(\la,\calA)z&=\lim_{\la \to +\infty}\la R(\la,\calA) R(s,\calA)z-D_s \lim_{\la \to +\infty}\la P R(\la,A)R(s,\calA)z\\
			&=R(s,\calA)z-D_s PR(s,\calA)z.
		\end{align*}
		This proves $Z_{-1,\calA}\subset {\bm D}(J^\calA)$  with
		\begin{align*}
			R(s,A)J^\calA z=R(s,\calA)z-D_s PR(s,\calA)z.
		\end{align*}
		From the above expression, we derive the following norm estimate
		%{which implies that}
		\begin{align*}
			\Vert J^\calA z\Vert_{Z_{-1,A}}=\Vert R(s, A)J^\calA z\Vert_{Z}
			\le \left(1+\Vert D_s P\Vert_{\calL(Z,Y)}\right)\Vert z\Vert_{Z_{-1,\calA}},
		\end{align*}
		for all $z\in Z_{-1,\calA}$. Thus, $J^\calA \in \calL(Z_{-1,\calA},Z_{-1,A})$. This ends the proof.
	\end{proof}
	
	In what follows, we define the Banach space $W_1$ as the closure of ${\bm D}(A)$ in $Z$. Then we have dense and continuous embeddings $X_1\subset W_1 \subset X$. Next, let $W_{-1,A}$ denote the closure of $X$ in $Z_{-1,A}$. For any $s\in \rho(A)$, it holds that
	\begin{align}\label{definition-W}
		W_{-1,A}=(s\mathcal{I}-A)W_{1}.
	\end{align}
	Similarly, we define $W_{-1,\calA}$ as the analogue of $W_{-1,A}$ for $\calA$; that is, $W_{-1,\calA}$ is the closure of $X$ in $Z_{-1,\calA}$. With these definitions, we have dense and continuous embeddings
	\begin{align*}
		X\subset W_{-1,A} \subset X_{-1,A},\qquad X\subset W_{-1,\calA} \subset X_{-1,\calA}.
	\end{align*}
	%densely and with continuous embeddings. %For further details on this construction, we refer to \cite[Sect. 7]{WR}.
	
	The spaces introduced above possess the following fundamental properties.
	\begin{lemma}\label{technical-lem3}
		Consider the setting of Theorem \ref{Main1}. Then $J^A$ is an isomorphism from $W_{-1,A}$ to $W_{-1,\calA}$ and $(J^A)^{-1}=J^\calA$.
	\end{lemma}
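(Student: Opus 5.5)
The plan is to exploit the bounded maps already built in Lemmas \ref{technical-lem1} and \ref{technical-lem2}, together with the identities \eqref{Tool} and \eqref{extension-identityJ'}. These say that $J^A$ and $J^\calA$ both act as the identity on $X$. Everything then reduces to density arguments.

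First, we show that $J^A$ maps $W_{-1,A}$ into $W_{-1,\calA}$. Let $w\in W_{-1,A}$. By definition of $W_{-1,A}$ as the closure of $X$ in $Z_{-1,A}$, there is a sequence $(x_n)\subset X$ with $x_n\to w$ in $Z_{-1,A}$. By Lemma \ref{technical-lem1}, $J^A\in\calL(Z_{-1,A},Z_{-1,\calA})$. Hence $J^Ax_n\to J^Aw$ in $Z_{-1,\calA}$. Since $J^Ax_n=x_n\in X$ by \eqref{Tool}, the limit $J^Aw$ lies in the closure of $X$ in $Z_{-1,\calA}$, which is $W_{-1,\calA}$. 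The same argument, using Lemma \ref{technical-lem2} and \eqref{extension-identityJ'}, shows $J^\calA(W_{-1,\calA})\subset W_{-1,A}$. Both restricted maps are bounded, because the norms on $W_{-1,A}$ and $W_{-1,\calA}$ are the ones inherited from $Z_{-1,A}$ and $Z_{-1,\calA}$.

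Second, we check that the two maps are mutually inverse. The compositions $J^\calA J^A\in\calL(W_{-1,A})$ and $J^AJ^\calA\in\calL(W_{-1,\calA})$ are bounded. Both reduce to the identity on the dense subspace $X$, since $J^\calA J^Ax=J^\calA x=x$ and $J^AJ^\calA x=x$. By continuity they equal the identity on all of $W_{-1,A}$ and $W_{-1,\calA}$, respectively. Hence $J^A:W_{-1,A}\to W_{-1,\calA}$ is a bijection with bounded inverse $(J^A)^{-1}=J^\calA$, i.e.\ an isomorphism.

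The main obstacle is a well-definedness subtlety. $J^A$ is defined on $\bm D(J^A)\subset X_{-1,A}$ by a limit in $X_{-1,\calA}$. We must make sure that the value obtained via the bounded extension on $Z_{-1,A}$ agrees with this defining limit, and that membership in $W_{-1,\calA}$ is unambiguous. The latter requires the embeddings $Z_{-1,\calA}\subset X_{-1,\calA}$ and $Z_{-1,A}\subset X_{-1,A}$ to be injective, so that a limit in the $Z$-norm is the same element as the limit in the extrapolation norm.

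For the first point, Lemma \ref{technical-lem1} already shows that every $z\in Z_{-1,A}$ lies in $\bm D(J^A)$. Its proof also gives the formula \eqref{J-expression} for $J^Az$, whose right-hand side is continuous in $z$ for the $Z_{-1,A}$-norm. So the defining limit coincides with the continuous extension. For the second point, I would argue injectivity from the definitions: $R(s,A)$ extends to an isometric isomorphism from $Z_{-1,A}$ onto $Z$, and the continuous inclusion $Z\subset X$ is injective. Composing, an element of $Z_{-1,A}$ that vanishes in $X_{-1,A}$ must vanish in $Z_{-1,A}$. The same argument applies to $\calA$. With this in place, the density argument above is rigorous.
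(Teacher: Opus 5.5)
Your argument is correct and is essentially the proof the paper intends. The paper simply defers to Weiss's Theorem~7.7 in \cite{WF}, and what that adaptation amounts to here is exactly your density argument: $J^A$ and $J^\calA$ are bounded on $Z_{-1,A}$ and $Z_{-1,\calA}$ by Lemmas~\ref{technical-lem1} and~\ref{technical-lem2}, they act as the identity on $X$, and $X$ is by definition dense in $W_{-1,A}$ and $W_{-1,\calA}$, so each map lands in the right space and both compositions are the identity.

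One small remark on your injectivity check: you only need $R(s,A)$ to extend to an isometry from $Z_{-1,A}$ \emph{into} $Z$, not onto $Z$, and into is all your argument uses. If $Z_{-1,A}$ is read literally as a completion, the image of that extension is the closure of $R(s,A)Z$ in $Z$, which need not be all of $Z$.
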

	\begin{proof}
		The proof follows by adapting the arguments in \cite[Thm. 7.7]{WF}, using Lemmas \ref{technical-lem1} and \ref{technical-lem2}. The technical details are analogous and are omitted for brevity.
	\end{proof}
	
	Using this isomorphism, we identify $W_{-1,A}$ and $W_{-1,\calA}$, so that we make no distinction between $x\in W_{-1,A}$ and its image $J^A x\in W_{-1,\calA}$. Moreover, for any $x\in W_{-1,A}$,
	\begin{align*}
		x=\lim\limits_{\la\to +\infty} \lambda R(\lambda,A_{-1})x=\lim\limits_{\la\to +\infty} \lambda R(\lambda,\calA_{-1})x\quad \text{in both } X_{-1,A} \text{ and } X_{-1, \calA}.
        \end{align*}
	
     This identification allows us to prove the following result.
     \begin{lemma}\label{echnical-lem4}
		Consider the setting of Theorem \ref{Main1}. Then,
		\begin{align}\label{Boundness-of-B}
			B\in \calL(Y,W_{-1,\calA}).
		\end{align}
	\end{lemma}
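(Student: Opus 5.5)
The plan is to show first that $B\in\calL(Y,W_{-1,A})$. The identification furnished by Lemma \ref{technical-lem3}, namely $J^A\colon W_{-1,A}\to W_{-1,\calA}$ being an isomorphism, then transports this to $W_{-1,\calA}$. By \eqref{boundary-control}, $By=(s\mathcal{I}-A_{-1})D_s y$ for $s\in\rho(A)$. By \eqref{definition-W}, $W_{-1,A}=(s\mathcal{I}-A)W_1$, where $(s\mathcal{I}-A)$ is understood as the continuous extension of $s\mathcal{I}-A\colon X_1\to X$ to $W_1$ with values in $Z_{-1,A}$. It therefore suffices to prove that $D_s y\in W_1$ for every $y\in Y$ and that $\|D_s y\|_Z\le C\|y\|_Y$.

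The bound is immediate from the definition of the $Z$-norm. Take the decomposition $D_s y=0+R(s,A_{-1})By$; this gives $\|D_sy\|_Z\le\|y\|_Y$. For the membership $D_sy\in W_1=\overline{{\bm D}(A)}^{Z}$, I would use the Yosida-type approximants $x_n:=nR(n,A)D_sy\in{\bm D}(A)$. The resolvent identity gives
\begin{align*}
nR(n,A)D_s y=\frac{n}{n-s}\,(D_s y-D_n y),
\end{align*}
so $D_sy-x_n=\frac{n}{n-s}D_ny-\frac{s}{n-s}D_sy$. It then remains to show $\|D_ny\|_Z\to0$. In $X$ this follows from $D_n=nR(n,A_{-1})B\cdot\frac1n$ type estimates, but the $Z$-norm bound $\|D_ny\|_Z\le\|y\|$ obtained by the same trick does not decay. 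This is the main obstacle. To get decay I would write $D_ny=D_sy-(n-s)R(n,A)D_sy$ and take the alternative decomposition $D_ny=x+D_sy'$ with $x\in{\bm D}(A)$. Since the topology of $Z$ is independent of $\lambda$, an equivalent norm is $\inf\{\|x\|_X+\|y'\|_Y\}$ over decompositions $z=x+D_sy'$, with ${\bm D}(A)$ carrying the $X$-norm as in the definition. Choosing $y'=0$ and $x=D_ny-D_sy+D_sy$ fails, so instead I would observe that ${\bm D}(A)$ is included with its $X$-norm only. Hence $\|z\|_Z\lesssim\|z\|_X$ whenever $z\in{\bm D}(A)$, and in particular $\|D_sy-x_n\|_Z\lesssim\|D_sy-x_n\|_X$ would suffice if the difference lay in ${\bm D}(A)$, which it does not. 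I would therefore try the decomposition $D_sy-x_n=\big(D_sy-x_n-D_s(\epsilon y)\big)+D_s(\epsilon y)$, or more directly exploit that $W_1$ is closed and contains $x_n$. Showing that $x_n\to D_sy$ in $Z$ reduces to the convergence $\|D_ny\|_X\to0$ together with the flexibility of the infimum. If this fails, I would fall back on positivity and use Dini's theorem as in Lemma \ref{P-regularity}, since $(D_ny)_n$ is decreasing for $y\ge0$ by Remark \ref{Remark-on-well-posedness}.

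Once $D_sY\subset W_1$ boundedly, we get $B=(s\mathcal{I}-A)D_s\in\calL(Y,W_{-1,A})$. Composing with $J^A\in\calL(W_{-1,A},W_{-1,\calA})$ from Lemmas \ref{technical-lem1} and \ref{technical-lem3}, and using the identification of $x$ with $J^Ax$, yields \eqref{Boundness-of-B}. As a consistency check, the formula \eqref{J-expression} gives explicitly
\begin{align*}
R(s,\calA)By=D_sy+D_s(\mathcal{I}-PD_s)^{-1}PD_sy=D_s(\mathcal{I}-PD_s)^{-1}y,
\end{align*}
for $s>s(\calA)$. This exhibits $B$ as $(s\mathcal{I}-\calA_{-1})D_s(\mathcal{I}-PD_s)^{-1}$, whose boundedness into $Z_{-1,\calA}$ follows from $\|D_s(\mathcal{I}-PD_s)^{-1}y\|_Z\le C\|y\|$.
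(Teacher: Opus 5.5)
\textbf{There is a genuine gap, and it is shared with the paper.}

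Your plan follows the paper's proof. You bound $D_s$ into $Z$ and conclude $D_s\in\calL(Y,W_1)$. You then get $B=(s\mathcal{I}-A_{-1})D_s\in\calL(Y,W_{-1,A})$ from \eqref{definition-W} and transport it with Lemma~\ref{technical-lem3}. Your bound $\Vert D_sy\Vert_Z\le\Vert y\Vert_Y$, via $D_sy=0+R(s,A_{-1})By$, is a cleaner substitute for the paper's closed-graph argument. The paper then simply writes ``therefore $D_\la\in\calL(Y,W_1)$''.

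You correctly isolate this membership as the crux, but you do not prove it. No refinement of your Yosida or Dini attempts can prove it, because it is false for $W_1=\overline{\bm{D}(A)}^{Z}$.
\begin{itemize}
\item There is no ``flexibility of the infimum''. If $z=x+D_\la y$ with $x\in\bm{D}(A)\subset\ker G$, then $Gz=y$ because $GD_\la=\mathcal{I}$. So the decomposition is unique and $\Vert z\Vert_Z\ge\Vert Gz\Vert_Y$.
\item Hence $G\in\calL(Z,Y)$, so $\bm{D}(A)=\ker G$ is closed in $Z$ and $W_1=\bm{D}(A)$. In particular $\Vert D_sy-x\Vert_Z\ge\Vert y\Vert_Y$ for every $x\in\bm{D}(A)$.
\item Your observation that $\Vert D_ny\Vert_Z$ does not decay is therefore exact: $\Vert D_ny\Vert_Z\ge\Vert y\Vert_Y$. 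Convergence of $D_ny$ in $X$, by Dini or otherwise, never touches the $Y$-component $Gz$ of the $Z$-norm.
\item Consequently $W_{-1,A}=(s\mathcal{I}-A)\bm{D}(A)=X$. This cannot contain $\operatorname{ran}B$, since $\operatorname{ran}B\cap X=\{0\}$.
\item Likewise $\bm{D}(\calA)=\ker(G-P)$ is closed in $Z$ (the paper itself uses $P\in\calL(Z,Y)$), so $W_{-1,\calA}=X$. But $(G-P)D_s(\mathcal{I}-PD_s)^{-1}y=y$ shows $J^ABy\notin X$ for $y\neq0$.
\end{itemize}
So the step fails in the paper's proof too, and the lemma cannot hold as literally stated with the $W$-spaces.

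\textbf{What is true, and how your consistency check proves it.} The proof of Theorem~\ref{Main1} only needs $\calB\in\calL(U,X_{-1,\calA})$ together with a consistent identification. That is the $Z$-level version, and your closing ``consistency check'' essentially proves it.
\begin{itemize}
\item Read $Z_{-1,A}$ as $(s\mathcal{I}-A_{-1})Z$ normed by $\Vert R(s,A_{-1})\cdot\Vert_Z$, which is how Lemma~\ref{technical-lem1} uses it. Then $R(s,A_{-1})B=D_s$ and your bound give $B\in\calL(Y,Z_{-1,A})$.
\item For $s>s(\calA)$, Lemma~\ref{technical-lem1} then gives $J^AB\in\calL(Y,Z_{-1,\calA})\subset\calL(Y,X_{-1,\calA})$, with $R(s,\calA_{-1})J^ABy=D_s(\mathcal{I}-PD_s)^{-1}y$.
\item Substitute the formula of Lemma~\ref{technical-lem1} into that of Lemma~\ref{technical-lem2}, noting $P\xi=(\mathcal{I}-PD_s)^{-1}PR(s,A_{-1})z$ for $\xi=R(s,\calA_{-1})J^Az$. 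This gives $J^{\calA}J^A=\mathcal{I}$ on $Z_{-1,A}$, and symmetrically $J^AJ^{\calA}=\mathcal{I}$ on $Z_{-1,\calA}$.
\end{itemize}
So the identification should be made between $Z_{-1,A}$ and $Z_{-1,\calA}$, not between the $W$-spaces. With that replacement your argument is complete, and no density statement about $\bm{D}(A)$ in $Z$ is needed.
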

	\begin{proof}
		By the closed graph theorem, $D_\la \in \calL(Y, Z)$ for any $\la \in \rho(A)$, since $\operatorname{ran} D_\la\subset Z$ and $Z\subset X$ with continuous embedding. Therefore, $D_\la \in \calL(Y, W_1)$, and by \eqref{definition-W} it follows that $B\in \calL(Y,W_{-1,A})$.  Hence, according to Lemma \ref{technical-lem3}, $B\in \calL(Y,W_{-1,\calA})$. This completes the proof.
	\end{proof}
	
   With these preparations in place, we are now ready to give the rigorous proof of Theorem \ref{Main1}.

    \emph{Proof of Theorem \ref{Main1}:\enspace}
		Since $G$ is surjective and $A := \tilde{A}|_{\ker G}$ generates a positive $C_0$-semigroup, it follows from Section \ref{Sec:2.2} that the boundary control system \eqref{BVCP} can be reformulated as  system \eqref{BVCP-distributed}. To establish the well-posedness of \eqref{BVCP-distributed}, we proceed in two steps.
		
		First, we show that the operator $\check{\calA}$, defined in \eqref{Def.checkA}, generates a $C_0$-semigroup on $X$. In fact, using the representation \eqref{representation} with $G$ in place of $\Gamma$, we obtain
		\begin{align*}
			\calA x = \tilde{A} x &= (A_{-1} + B G)x
			= (A_{-1} + B P)x
			= \check{\calA} x,
		\end{align*}
		for all $x \in {\bm D}(\calA)$, which shows that ${\bm D}(\calA) \subset {\bm D}(\check{\calA})$. For the reverse inclusion, let $\lambda > s(A)$ and $x \in {\bm D}(\check{\calA})$. Then,
		\begin{align*}
			R(\lambda, A) \check{\calA} x
			= R(\lambda, A)(A_{-1} + B P)x
			= \lambda R(\lambda, A)x + (D_\lambda P x-x).
		\end{align*}
		Applying $G$ to both sides yields $G(D_\lambda P x-x) = 0$, which implies $Px=Gx$ and therefore $x \in {\bm D}(\calA)$. This proves that $\check{\calA} = \calA$, and consequently the operator $\check{\calA}$ generates a positive $C_0$-semigroup $\calT$ on $X$. %Furthermore, we have $B \in \calL(Y, W_{-1, \calA})$ due to Lemma \ref{echnical-lem4}.
        Thus,  system \eqref{BVCP-distributed}--and hence also \eqref{BVCP}--is equivalent to the following system
		\begin{align}\label{Cauchy-problem}
			\dot{z}(t) = \calA_{-1} z(t) + \calB u(t), \quad t > 0,\qquad z(0)={z_0},
		\end{align}
		where $\calB:=BK\in \calL(U, W_{-1, \calA})$ due to Lemma \ref{echnical-lem4}.
		
		We now establish that $(\calA,\calB)$ is zero-class $L^p$-admissible. Using assumption {\bf (A3)} and noting that the family $(P D_\lambda)_{\lambda > s(A)}$ is positive and monotonically decreasing, it follows from \eqref{resolvent-operator} that
		\begin{align}\label{Resolvent}
			R(\lambda,\calA)
			= R(\lambda,A)+D_\lambda \sum_{n=0}^{\infty} (P D_\lambda)^n P R(\lambda, A)
			\ge R(\lambda, A) \ge 0,
		\end{align}
		for all $\lambda \ge \lambda_1$. By the monotonicity of the norm, we then obtain $\|R(\lambda, \calA)x\| \ge \|R(\lambda, A)x\|$ for all
		$\lambda \ge \lambda_1$ and $x \in X_+$. Thus, for any $\lambda > \max\{\lambda_0, \lambda_1\}$ there exists $c := c(\lambda) > 0$ such that
		\begin{align}\label{Inverse-F}
			\|R(\lambda, \calA)x\| \ge c \|x\|, \qquad \forall\, x \in X_+.
		\end{align}
		Therefore, by \cite[Thm. 2.7]{ELLC}, the pair $(\calA, B)$ is $L^1$-admissible. Since $K$ is bounded, it follows from Remark \ref{Remark-zer0-class}-(c) that the pair $(\calA, \calB)$ is zero-class $L^p$-admissible for any $p\in (1,\infty]$. Hence, according to \cite[Prop. 2.5]{JaNPS}, we conclude that system \eqref{BVCP} admits a unique mild solution $z \in  {C}(\mathbb{R}_+; X)$ given by
		\begin{align}\label{variation1}
			z(t) = \calT(t)z_0 + \int_{0}^{t} \calT_{-1}(t-s) \calB u(s) {\rm{d}}s,
		\end{align}
		for all $z_0 \in X$ and $u \in L_{loc}^p(\mathbb{R}_+; U)$. This completes the proof. \hfill $\blacksquare$\par

	\subsection{Proof of Theorem \ref{Main2}}
	Before proceeding with the proof, we first derive an infinite-time $L^p$-admissibility result.
	
	\begin{lemma}\label{Lem-admi}
		%Let $G$ be surjective and
		Let $p \in [1, \infty]$. Suppose that the restriction $A$ is a densely defined resolvent positive operator such that
		\begin{align}\label{inverse-estimate}
			\Vert R(\lambda_0, A)x \Vert \geq c \Vert x \Vert, \quad \forall x \in X_+,
		\end{align}
		for some $\lambda_0 > s(A)$ and $c > 0$. If $R(\lambda,A_{-1})B$ is positive for all sufficiently large $\lambda \in \R$ and $s(A) < 0$, then there is a constant $\check{C}_p>0$ such that
		\begin{align*}
			\left\Vert \Phi_t^A v \right\Vert \le \check{C}_p \left\Vert R(\lambda_0, A) B \right\Vert \Vert v \Vert_{L^p(\R_+;Y)},\quad \forall t \ge 0,v \in L^p(\R_+; Y).
		\end{align*}
		%for all $t \ge 0$ and $v \in L^p(\R_+; Y)$,
		%where {$\check{C}_p$ is a positive constant given by \eqref{check-p}.}
		%	%\begin{align*}
		%		$\check{C}_p := \begin{cases}
			%			\frac{M}{c}, & p = 1,\\
			%			\frac{M}{c} \left( \frac{p-1}{p \omega} \right)^{\frac{p-1}{p}}, & p \in(1,\infty), \\
			%			\frac{M}{c\omega}, & p =\infty,
			%		\end{cases}$
		%	%\end{align*}
		%	for constants $M, \omega > 0$.
		Moreover, $(A,B)$ is infinite-time $L^p$-admissible.
	\end{lemma}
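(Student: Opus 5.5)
The plan is to exploit positivity twice: to place $\Phi_t^A v$ in the positive cone of $X$, and to use the lower estimate \eqref{inverse-estimate} to turn a bound on $R(\lambda_0,A)\Phi_t^A v$ into a bound on $\Phi_t^A v$ itself. Since $R(\lambda_0,A)$ smooths the input map, the former is a genuine $X$-valued Bochner integral that can be estimated directly.

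\textbf{Step 1: admissibility.} We first make sure that $\Phi_t^A v\in X$. Under exactly these hypotheses, \cite[Thm. 2.7]{ELLC} gives $L^1$-admissibility of $(A,B)$; this is the result already invoked in the proof of Theorem \ref{Main1}. By Remark \ref{Remark-zer0-class}(ii), $(A,B)$ is then $L^p$-admissible for every $p\in[1,\infty]$. Hence $\Phi^A_t v\in X$ for all $t\ge0$ and $v\in L^p(\R_+;Y)$.

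\textbf{Step 2: reduction to positive inputs.} Since $L^p(\R_+;Y)$ is a Banach lattice, write $v=v^+-v^-$ with $\|v^\pm\|_{L^p}\le\|v\|_{L^p}$. It therefore suffices to treat $v\ge0$, at the cost of a factor $2$ that can be absorbed into the constant, or avoided by estimating $|\Phi_t^Av|\le\Phi_t^A|v|$. For $v\ge 0$, the positivity of $T_{-1}$ on $(X_{-1,A})_+$ and of $R(\lambda,A_{-1})B$ give $\Phi^A_t v\in (X_{-1,A})_+\cap X=X_+$, using $X_+=X\cap(X_{-1,A})_+$ from \cite[Prop. 2.3]{BJVW}. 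The estimate \eqref{inverse-estimate} then yields
\begin{align*}
\|\Phi_t^A v\|\le \frac1c\,\|R(\lambda_0,A)\Phi_t^A v\| = \frac1c\Big\|\int_0^t T(t-s)\,R(\lambda_0,A_{-1})B\,v(s)\,{\rm d}s\Big\|,
\end{align*}
where the integrand is now $X$-valued because $R(\lambda_0,A_{-1})B\in\calL(Y,X)$.

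\textbf{Step 3: exponential decay and H\"older.} Given a bound $\|T(t)\|\le Ne^{-at}$, we obtain
\begin{align*}
\|\Phi_t^A v\|\le \frac{N}{c}\,\|R(\lambda_0,A)B\|\int_0^t e^{-a(t-s)}\|v(s)\|\,{\rm d}s .
\end{align*}
H\"older's inequality with conjugate exponent $q=p/(p-1)$ and $\int_0^\infty e^{-aqr}\,{\rm d}r=(aq)^{-1}$ then produces the three constants $\frac{N}{c}$, $\frac Nc\big(\frac{p-1}{pa}\big)^{(p-1)/p}$ and $\frac{N}{ac}$ for $p=1$, $p\in(1,\infty)$ and $p=\infty$. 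These match $\check C_p$ in Theorem \ref{Main2}. Since the bound is uniform in $t$, $\kappa_\infty<\infty$, and infinite-time $L^p$-admissibility follows.

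\textbf{Main obstacle.} The delicate point is obtaining $\|T(t)\|\le Ne^{-at}$ with $a>0$ from $s(A)<0$. For positive semigroups one only has $s(A)=\omega_1(T)$ in general, while $\omega_0(T)=s(A)$ holds in $L^p$-type lattices \cite{Lutz1}. My first attempt is to avoid needing $\omega_0<0$ by using $\omega_1(T)<0$. This gives, via the uniform boundedness principle, $\|T(t)R(\lambda_0,A)\|\le Ne^{-at}$ for every $a<-s(A)$. I would then apply \eqref{inverse-estimate} twice, using $\|R(\lambda_0,A)^2w\|\ge c^2\|w\|$ for $w\ge0$, so that the integrand becomes $T(t-s)R(\lambda_0,A)\cdot R(\lambda_0,A_{-1})Bv(s)$. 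This changes $c$ into $c^2$ in the constant. If the sharp constant with $c$ is wanted, I would instead assume, or verify in the concrete application, the spectrum determined growth property, so that $\omega_0(T)=s(A)<0$ directly.
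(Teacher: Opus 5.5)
Your proposal is correct and follows the paper's proof essentially step for step: positivity of $\Phi_t^A$ via \cite[Thm.~2.7]{ELLC}, the lower estimate \eqref{inverse-estimate} to pass to $R(\lambda_0,A)\Phi_t^A v=\int_0^t T(t-s)R(\lambda_0,A_{-1})Bv(s)\,{\rm d}s$, an exponential bound on $T$ combined with H\"older's inequality, and finally $|\Phi_t^A v|\le \Phi_t^A|v|$. The only divergence is your ``main obstacle'': the paper simply takes $\omega_0(T)=s(A)$ from \cite[Thm.~2.5]{Arendt}, while your detour through $\omega_1(T)=s(A)$ is a valid self-contained substitute that actually settles the point, because $\|T(t)x\|\le\|T(t)|x|\|\le c^{-1}\|T(t)R(\lambda_0,A)|x|\|\le (N/c)e^{-at}\|x\|$ gives $\omega_0(T)=s(A)$ directly, so the extra factor $1/c$ is absorbed into the semigroup constant and no spectrum-determined-growth assumption is needed.
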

	
	\begin{proof}
		By \cite[Thm. 2.5]{Arendt}, the operator $A$ generates a positive $C_0$-semigroup $T$ on $X$ such that $\omega_0(T) = s(A)$. Moreover, by \cite[Prop. 4.3]{SGPS}, the boundary control operator $B$ is positive. Thus, according to \cite[Thm. 2.7]{ELLC}, $\Phi_t^A v \in X_+$ for all $t \ge 0$ and $v \in L^p(\R_+;Y)$.
		
		For $s(A)<0$, there exist ${\xi},\omega>0$ such that $\Vert T(t)\Vert \leq {\xi}e^{-\omega t}$ for all $t\ge 0$. Using  {\eqref{inverse-estimate}} and H\"{o}lder's inequality, we obtain
		\begin{align*}
			c\Vert \Phi_{t}^A v\Vert & \le \left\Vert R(\lambda_0,A) \Phi_{t}^A v\right\Vert \\
			&= \left\Vert \int_{0}^{t}T(t-s)R(\lambda_0,A_{-1})B v(s){\rm{d}}s\right\Vert \\
			&\leq {\xi}\Vert R(\lambda_0,A_{-1})B\Vert\left(\int_{0}^{t}e^{-\frac{\omega p}{p-1} (t-s)}{\rm{d}}s\right)^{\frac{p-1}{p}} \left( \int_{0}^{t} \Vert v(s) \Vert^p {\rm{d}}s\right)^{\frac{1}{p}},
		\end{align*}
		for all $v\in {L^p_+}(\R_+;Y)$. Computing the integral yields
		\begin{align}\label{estimate-input-map}
			\left\Vert \Phi_t^A v \right\Vert \le \check{C}_p \left\Vert R(\lambda_0, A_{-1}) B \right\Vert \Vert v \Vert_{L^p(\R_+;Y)}, \quad \forall v \in L^p_+ (\R_+; Y),
		\end{align}
		where
		\begin{align*}%\label{check-p}
			\check{C}_p := \begin{cases}
				\frac{{\xi}}{c}, & p = 1,\\
				\frac{{\xi}}{c} \left( \frac{p-1}{p \omega} \right)^{\frac{p-1}{p}}, & p \in(1,\infty), \\
				\frac{{\xi}}{c\omega}, & p =\infty.
			\end{cases}
		\end{align*}
		Since $\Phi_t^A$ is positive for all $t \ge 0$, then $\left\vert \Phi_t^A v \right\vert \le \Phi_t^A \vert v \vert$ for all $v \in L^p(\R_+; Y)$. Therefore, the estimate \eqref{estimate-input-map} holds for any $v \in L^p(\R_+; Y)$, which establishes the infinite-time $L^p$-admissibility of $(A,B)$.
	\end{proof}

    We can now present the proof of Theorem \ref{Main2}.

 	 \emph{Proof of Theorem \ref{Main2}: \enspace} 		% Since $(\calA, \calB)$ is {$L^1$-admissible} and {zero-class $L^p$-admissible for any $p\in (1,\infty]$}
		From the proof of Theorem \ref{Main1}, system \eqref{BVCP} is equivalent to system \eqref{Cauchy-problem}. Thus, according to \cite[Prop. 2.10]{JaNPS},   system \eqref{BVCP} is exponentially $L^p$-ISS if and only if $\omega_0(\calT)<0$. Using \cite[Thm. 2]{El1}, this is equivalent to the spectral condition \eqref{characterization}.
		
      Next, we establish the ISS estimate \eqref{ISS-estimate}. Assume that the condition \eqref{characterization} holds. It follows from \cite[Thm. 2]{El1} that there exist $N, a > 0$ such that
		\begin{align}\label{Expon-sta}
			\|\calT(t)\| \le N e^{-a t}, \qquad \forall\, t \ge 0.
		\end{align}
		For $t \ge 0$, denote by $\Phi_t^{\calA}$ the input-map of $(\calA, \calB)$ defined by
		\begin{align*}
			\Phi_t^{\calA} u := \int_0^t \calT_{-1}(t-s)\, \calB u(s) {\rm{d}}s, \qquad
			\forall u \in L^p(\mathbb{R}_+; U).
		\end{align*}
		Since $\calA$ satisfies the resolvent condition \eqref{Inverse-F}, it follows from Lemma \ref{Lem-admi} that
		\begin{align*}
			\|\Phi_t^{\calA} u\| \le \frac{N}{c} \left\| R(\lambda, \calA_{-1}) \calB \right\|
			\check{C}_p\|u\|_{L^p(\R_+; U)},
		\end{align*}
		for all $t \ge 0$, $u \in L^p(\R_+; U)$, and some $\lambda > \max\{\lambda_0,\lambda_1\}$, where $\lambda_0$ and $\lambda_1$ are as in assumptions {\bf (A1)} and {\bf (A3)}. By the positivity and monotonicity of the resolvent map $\lambda \mapsto R(\lambda, \calA)$ on $(s(\calA),\infty)$ and the condition $s(\calA)<0$, we have
		\begin{align*}
			0\le R(\lambda, \calA) \le R(0, \calA), \qquad \la \ge 0.
		\end{align*}
		Thus, for all $\lambda \ge 0$ and $u \in U$,
		\begin{align*}
			\left\vert R(\lambda, \calA_{-1}) B K u \right\vert \le  \left\vert R(0, \calA_{-1}) B K u \right\vert \le  R(0, \calA_{-1}) B  \left\vert K u\right\vert,
		\end{align*}
		due to the facts that $B\in \calL(U, W_{-1, \calA})$ and $R(0, \calA_{-1}) B$ is positive. The lattice property of the norms on $X$, $Y$, and $U$ then implies
		\begin{align*}
			\left\Vert R(\la,\calA_{-1})\calB\right\Vert \le \Vert R(0,\calA_{-1})B \Vert \Vert K\Vert.
		\end{align*}
		On the other hand, using \eqref{resolvent-operator}, one obtains that
		\begin{align*}
			R(0,\calA_{-1}) \calB u  = D_0 (\mathcal{I}-P D_0)^{-1} Ku, \qquad \forall u\in U.
		\end{align*}
		For $\|P D_0\| < 1$, the Neumann series expansion yields
		\begin{align*}
			\Vert R(0, \calA_{-1}) \calB \Vert \le \Vert D_0\Vert \Vert K\Vert  \frac{1}{1 - \Vert P D_0\Vert}.
		\end{align*}
		Hence, for $\la>0{\vee (\la_0 \vee \la_1)}$, we get
		\begin{align}\label{Phi-estimate}
			\Vert\Phi_t^{\calA} u\Vert
			\le  \Vert D_0\Vert \Vert K\Vert  \frac{1}{1- \Vert P D_0\Vert}
			\check{C}_p \|u\|_{L^p(\R_+; U)},\quad  \forall t \ge 0,\ u \in L^p(\R_+;U).
		\end{align}
		The ISS estimate \eqref{ISS-estimate} now follows by combining \eqref{Expon-sta} with \eqref{Phi-estimate} through the variation of constants formula \eqref{variation1}. This ends the proof. \hfill $\blacksquare$\par

  %    \begin{remark}
%      Compared with Lemma \ref{Lem-admi}, Theorem \ref{Main2} establishes a robustness result for infinite?time admissibility. More precisely, if $A$ satisfies the resolvent inverse estimate \eqref{inverse2} on the positive real axis, and if the internal dynamics are positive and satisfy the small-gain condition $r(PD_0) < 1$, then $(\mathcal{A},B)$ is infinite-time admissible.
%      \end{remark}
%	
	
	\section{Application:  kinetic transport networks}\label{Sec:4}%governed by the Boltzmann-like equations}
	As an application of the developed theory, we consider the flow of material through an infinite network of interconnected circular edges (see Fig.~\ref{FigN}).
 On each circle $j\in \mathcal{J}$ (a countably infinite index set), the particle distribution is described by a kinetic density function $z_j(t,x,v)$, where $t \ge 0$ represents time, $x \in [0, l_j]$ denotes the position along the circle of length $l_j\in (0,\infty)$, and $v \in [v_{\min}, v_{\max}]$ is particle velocity with fixed bounds $0 < v_{\min} \le v_{\max} < \infty$.
		The evolution on the phase space $\Omega_j := (0, l_j) \times [v_{\min}, v_{\max}]$ is governed by the following linear  Boltzmann-like  equations
	\begin{align}\label{Eqt1}
		\begin{cases}
			\dot{z}_{j}(t,x,v) = -v\partial_xz_{j}(t,x,v)-q_{j}(x,v)z_{j}(t,x,v), & t> 0,\ (x,v)\in \Omega_j,\\
			z_{j}(0,x,v) = f_{j}(x,v), & (x,v)\in \Omega_j,
		\end{cases}
	\end{align}
	for $j\in \mathcal{J}$. Here, the advection term $-v\partial_x z_j$ models particle transport along the edge toward the junction at $x = 0$, {where $\partial_x$ denotes the partial derivative with respect to $x$ in the sense of distributions}. The coefficient $q_j(x, v)$ governs local mass absorption or generation, while $f_j(x,v)$ specifies the initial condition.
	
	The interaction at the junction O is prescribed by the TCs
	  \begin{align}\label{Eqt2}
			\begin{cases}
				v z_{i}(t,0,v) = \displaystyle\sum_{j\in \mathcal{J}} w_{ij}\left[\displaystyle\int_{-r_j}^{0}{\rm d}{\eta_j(\theta)} \int_{v_{\min}}^{v_{\max}}{\beta_j}(v,v')v'z_{j}(t+\theta,l_j,v') {\rm d}v+ u(t,v)\right], & \\
				z_{j}(\theta,l_j,v) = {\varphi_j}(\theta,v), &
			\end{cases}
	\end{align} for $i,j\in \mathcal{J}$, $t\ge 0$, and $(\theta,v)\in \Theta_j := [-r_j, 0] \times [v_{\min},v_{\max}]$, with finite time delays $r_j \in (0,\infty)$. The left-hand side of \eqref{Eqt2} represents the outgoing flux from the junction into circle $i$. The right-hand side describes a superposition of historical scattered fluxes: particles that arrived at the junction from all incoming circles $j$ during the time interval $[t-r_j,t]$ first undergo velocity redistribution through the scattering kernel $\beta_j(v,v')$. The scattered fluxes are then weighted by the delay measure ${\rm d}\eta_j(\theta)$ before being routed into outgoing circles according to the coefficients $w_{ij}\in (0,\infty)$. The term $u(t,v)$ models an external disturbance, i.e., uncontrolled flux perturbations. The second equation in \eqref{Eqt2} supplies the required historical initial condition on the delay interval, given by $\varphi_j(\theta,v)$.
	
	\begin{figure}[htb]
		\centering
		\includegraphics[width=6cm, height=5cm]{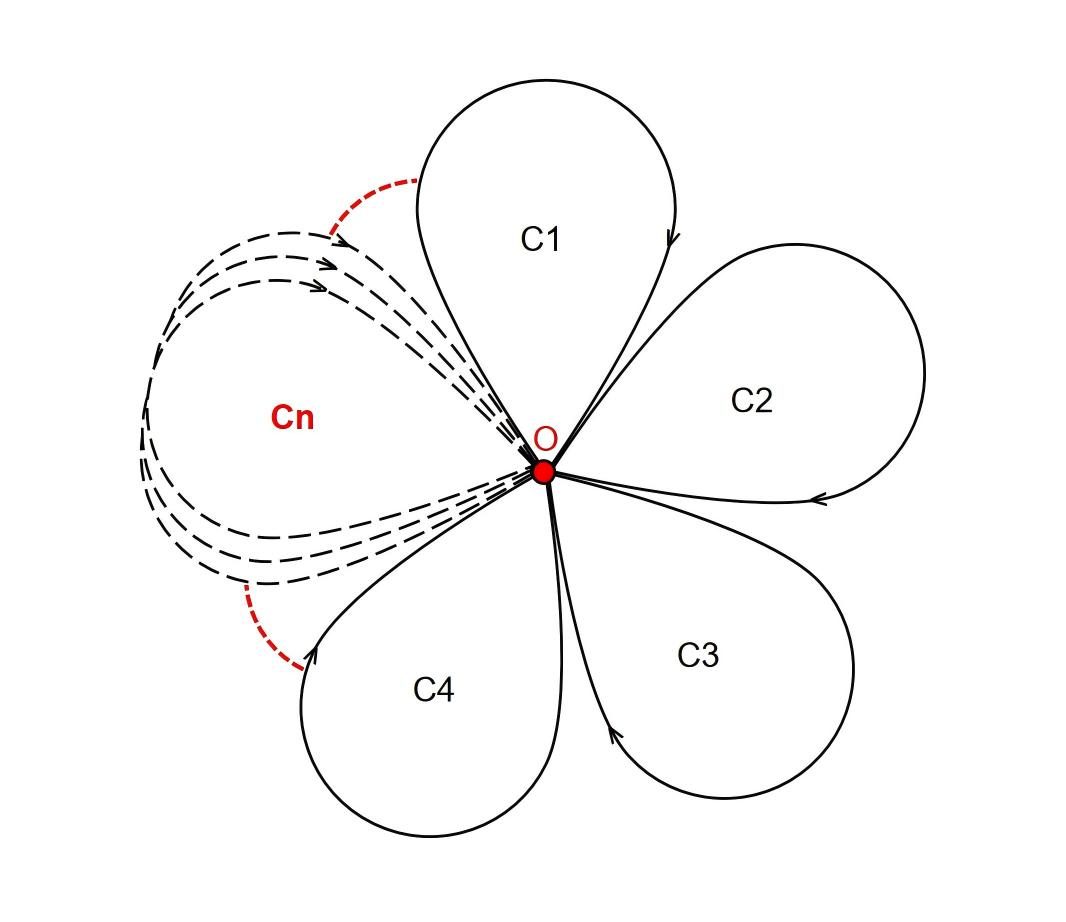}
		\caption{{An infinite network of intersecting circles connected at the junction O.}}\label{FigN}
		%\normalsize
	\end{figure}
	%\vspace{.1cm}
	
	With respect to the definition of the terms we have introduced in \eqref{Eqt1}-\eqref{Eqt2}, in order to consider the kinetic transport network system, we propose the following assumptions.
	\begin{mainassumptions}\label{Mainassump}
		\begin{itemize}
			\item[(i)] $ q_j\in L^{\infty}(\Omega_j) $ for all $j\in \mathcal{J}$ and there exist  constants $\gamma_1,\gamma_2\in \R$ such that
			\begin{align}\label{gamma}
				\gamma_1\le	q_j(x,v)\le \gamma_2 , \quad\forall j\in \mathcal{J}, \ \text{a.e.}\ (x,v)\in \Omega_j;
			\end{align}
			\item[(ii)] $\beta_j\in {L^{\infty}_+}([v_{\min},v_{\max}]^2)$ for all $j\in \mathcal{J}$.
		\end{itemize}
	\end{mainassumptions}
	
	In what follows, let $V := L^1(v_{\min}, v_{\max})$ denote the velocity distribution space. In addition to Assumption \ref{Mainassump}, we impose the following conditions on the functions $\eta_j$ appearing in \eqref{Eqt2}.
	
	\begin{mainassumptions}\label{Mainassump1}
		For each $j \in \mathcal{J}$, let $\eta_j: [-r_j, 0] \to \mathcal{L}{(V)}$ be an operator-valued function satisfying:
		\begin{itemize}
			\item[(i)]  $\eta_j$ is of bounded variation, i.e.,
			\begin{align*}
				{\rm Var}(\eta_j;-r_j,0) :=\sup_{\underset{m\in \N}{\theta_1=-r_j<\ldots<\theta_m=0}} \sum_{k=1}^{m}\Vert \eta_j(\theta_k)-\eta_j(\theta_{k-1})\Vert_{\calL(V)}<+\infty.
			\end{align*}
			\item[(ii)] $\eta_j$ is continuous from the left, i.e., $				\lim_{\theta \mapsto \theta_0^{-}}\eta_j(\theta)=\eta_j(\theta_0^{-})$.
			
		\end{itemize}
	\end{mainassumptions}
	
	To apply our abstract results, we first reformulate the kinetic transport network system \eqref{Eqt1}-\eqref{Eqt2} in suitable path spaces. Define the edge state-space
	\begin{align*}
		X^{\mathsf{e}}:=\big\{ (f_j)_{j\in \mathcal{J}} : f_j \in L^1(0,l_j;V),
		\sum_{j\in \mathcal{J}} \|f_j\|_{L^1(0,l_j;V)} < \infty \big\},
	\end{align*}
	endowed with the norm $\|f \|_{X^{\mathsf{e}}} :=\sum_{j\in \mathcal{J}} \|f_j\|_{L^1(0,l_j;V)}$. The space $X^{\mathsf{e}}$ represents the space of all possible particle density distributions along the network, where the $L^1$-norm computes the total mass present in the entire network. We also define the space of flux values at the junction O by
	\begin{align*}
		{\bm \ell}(V):=L^1(v_{\min},v_{\max};\ell^1),\quad \Vert g \Vert_{{\bm \ell}(V)} :=\sum_{j\in \mathcal{J}}\Vert g_j\Vert_{V}.
	\end{align*}
	
	On $X^{\mathsf{e}}$, we define the \emph{transport operator}
	\begin{align}\label{Trp-op}
		\tilde{A}_{\mathsf{e}}f:= \left(-v\partial_x f_j- q_j(\cdot,v)f_j\right)_{j\in \mathcal{J}},\quad f\in {\bm D}(\tilde{A}_{\mathsf{e}}):= \big\{ f \in X^{\mathsf{e}}: \partial_x f\in X^{\mathsf{e}}\big\}.
	\end{align}
	
	To incorporate memory effects arising from the delays in \eqref{Eqt2}, we define the history space
	\begin{align*}
		X^{o}:= \big\{ (\varphi_j)_{j\in \mathcal{J}} : \varphi_j \in L^1(-r_j,0;V),\ \sum_{j\in \mathcal{J}} \Vert \varphi_j\Vert_{L^1(-r_j,0;V)} < \infty \big\},
	\end{align*}
	endowed with the norm $\Vert \varphi\Vert_{X^o}:=\sum_{j\in \mathcal{J}} \Vert \varphi_j\Vert_{L^1(-r_j,0;V)}$. The space $X^{o}$ stores the past states of the boundary fluxes over the delay intervals $[-r_j,0]$.
	
	On $X^{o}$, we introduce the \emph{delay operator} ${L}:= \operatorname{diag}({L}_j)$ defined component-wise for $j\in \mathcal{J}$ by
	\begin{equation}\label{delay-operator}
		{L}_{j}(\varphi_j)(v)=\frac{1}{v}\int_{-r_j}^{0}{\rm d}\eta_j(\theta)\int_{v_{\min}}^{v_{\max}} \beta_j(v,v')v'\varphi_j(\theta,v'){\rm d}v',
	\end{equation}
	for  $\varphi_j\in L^1(-r_j,0;V)$ and $v\in [v_{\min}, v_{\max}]$,
	representing the memory-driven scattering process at the junction.
	
	%Now, let $z(t,x,v):=(z_j(t,x,v))_{j\in \mathcal{J}}$ denote the density distribution, and
    Now, we define the boundary values of the state $z_j(t,x,v)$ at the endpoints $l_j$ as
	\begin{align*}
		\check{z}_j(t,\cdot):=z(t,l_j,\cdot), \qquad \forall j\in \mathcal{J},\ t\ge 0.
	\end{align*}
	For each $t\ge 0$, we consider the history segment
	\begin{align*}
		\check{z}^t_j:[-r_{j},0] \ni \theta\to V,\quad \check{z}^t_j(\theta,\cdot):=\tilde{z}_j(t+\theta,\cdot), \qquad \forall j\in \mathcal{J}.
	\end{align*}
	Following standard practice for delay equations, we denote by $\check{z}(t)$ and $\check{z}^t$ the present and history of the density distribution $z$ at $l_j$, respectively. More precisely, we have
	\begin{align*}
		z(t):=(z_j(t,\cdot,\cdot))_{j\in \mathcal{J}}\in X^{\mathsf{e}},\quad
		\check{z}(t):=(\check{z}_j(t,\cdot))_{j\in \mathcal{J}}\in {\bm \ell}(V), \quad
		\check{z}^t:=(\check{z}^t_j(\cdot,\cdot))_{j\in \mathcal{J}}\in X^{o}.
	\end{align*}
	The history segment $\check{z}^t$ satisfies the following boundary evolution equation
	\begin{align}\label{shift}
		\begin{cases}
			\dot{y}(t,\cdot)= \tilde{A}_{\theta} y(t,\cdot),& t> 0,\quad y(0,\cdot)=\varphi,\\
			y(t,0)= \tilde{z}(t), &t> 0,
		\end{cases}
	\end{align}
	where $\tilde{A}_\theta:{\bm D}(\tilde{A}_\theta)\to X^{o}$ is the operator defined by
	\begin{align*}%\label{Q_m}
		\tilde{A}_\theta \varphi:=  \partial_\theta\varphi, \quad
		\varphi\in  {\bm D}(\tilde{A}_{\theta}):= \big\{ \varphi \in X^{o}: \partial_\theta \varphi\in X^{o}\big\}.
	\end{align*}
	
	The typical approach to the study of delay differential equations is to embed the underlying dynamical process into certain product space. For this purpose, we define
	\begin{align*}
		X :=X^{\mathsf{e}}\times X^{o},\qquad \left\|(f, \varphi)^{\top}\right\|:=\|f\|_{X^{\mathsf{e}}}+\|\varphi\|_{X^{o}},
	\end{align*}
	and the augmented state
	\begin{align*}
		\zeta(t)=(z(t), \check{z}^{t})^{\top},\qquad t\ge 0.
	\end{align*}
	
	Using the above notation,  system \eqref{Eqt1}-\eqref{Eqt2} can be reformulated as the boundary control problem
	\begin{align}\label{Main-system1}
		\begin{cases}
			\dot{\zeta}(t)= \tilde{A}\zeta(t),& t> 0,\\
			G \zeta(t)=P \zeta(t)+Ku(t), & t> 0,
		\end{cases}
	\end{align}
	with initial conditions $z(0)=(f_j)_{j\in \mathcal{J}}\in X^{\mathsf{e}}$, $\check{z}^0=(\varphi_j)_{j\in \mathcal{J}}\in X^{o}$, and input function $u(t):=u(t,\cdot)\in L^p(\R_+;V)$.
	Here, $\tilde{A}:{\bm D}(\tilde{A})\to X$ is the operator defined by
	\begin{align}\label{A_M}
		\tilde{A}:={\rm diag}(\tilde{A}_{\mathsf{e}},\tilde{A}_{\theta}), \qquad {\bm D}(\tilde{A}):={\bm D}(\tilde{A}_{\mathsf{e}})\times {\bm D}(\tilde{A}_{\theta}).
	\end{align}
	The operators  $G,P:{\bm D}(\tilde{A}) \to\bm{\ell}(V)\times \bm{\ell}(V)$ and $K:V\to \bm{\ell}(V)\times \bm{\ell}(V)$ are given by
	\begin{align}\label{G,Ga}
		G:=\begin{pmatrix}
			\delta_{0}^{X^{\mathsf{e}}} & 0 \\
			0 & \delta_{0}^{X^{o}}
		\end{pmatrix},\quad
		P:= \begin{pmatrix}
			0 & 	{L} \\
			\calF & 0
		\end{pmatrix},\quad K:= \begin{pmatrix}
			M \\
			0
		\end{pmatrix},
	\end{align}
	where for any $f\in {\bm D}(\tilde{A}_{\mathsf{e}})$ and any $\varphi\in {\bm D}(\tilde{A}_{\theta})$,
	\begin{align*}%\label{Delta}
		\mathcal{F}f:=M (f_j(l_j,\cdot))_{j\in \mathcal{J}},\quad
		\delta_{0}^{X^{\mathsf{e}}}f:=f(0,\cdot),\quad
		\delta_{0}^{X^{o}}\varphi=\varphi(0,\cdot).
	\end{align*}
      Here $M:=(w_{ij})_{i,j\in \mathcal{J}}$ denotes the network routing matrix and $\delta_{0}^E$ (for $E=X^{\mathsf{e}}$ or $X^{o}$) denotes the Dirac mass at $0$.

	\begin{remark}
     The abstract boundary condition $G \zeta(t) = P \zeta(t)+K u(t)$ provides an operatorial mathematical representation of the physical TCs \eqref{Eqt2}. This reformulation enables direct application of our theoretical framework to investigate the ISS for the kinetic transport network system \eqref{Eqt1}-\eqref{Eqt2}.
	\end{remark}
	
	In the sequel, we establish a complete characterization for the exponential $L^p$-ISS of system \eqref{Eqt1}-\eqref{Eqt2}. To this ends, we introduce the following parameters
	\begin{equation*}
		\underline{l}:= \inf_{j\in \mathcal{J}}l_j,\  \bar{l}:=\sup_{j\in \mathcal{J}}l_j,\
		\bar{r}:=\sup_{j\in \mathcal{J}}r_j, \
		\bar{\beta}:= \sup_{j\in \mathcal{J}}\Vert\beta_j\Vert_{L^{\infty}} ,\  \overline{{\rm Var}}:=\sup_{j\in \mathcal{J}}{\rm Var}(\eta_j;-\bar{r},0).
	\end{equation*}
	Furthermore, we consider the operator
	\begin{align}\label{Matrix-H}
		\Lambda:{\bm \ell}(V)\to {\bm \ell}(V), \quad
		\Lambda g :={L} e_0 \calF\Xi_0 g,
	\end{align}
	for $\lambda \in \C$ and a.e. $\theta \in [-\bar{r},0]$, $v \in [v_{\min},v_{\max}]$, $x \in [0,\bar{l}]$, where for  $ g,h\in {\bm \ell}(V)$, the auxiliary operators $\Xi_\la:{\bm \ell}(V)\to X^{\mathsf{e}}$ and $e_\la:{\bm \ell}(V)\to X^{o}$ are defined by
	\begin{subequations}\label{Dirichlet-operators}
		\begin{align}
			(\Xi_\la g)(x,v)&:=\operatorname{diag}\left({{\exp}}{\left(-\int_0^x \frac{\lambda + q_j(y,v)}{v} {\rm d}y  \right)}\right)_{j\in \mathcal{J}}g(v),\\
			(e_\la h )(\theta,v)&:= e^{\la\theta} h(v).
		\end{align}
	\end{subequations}

	The necessary and sufficient conditions for the exponential $L^p$-ISS of the kinetic transport network system \eqref{Eqt1}-\eqref{Eqt2} is stated in the following proposition.
	
	\begin{theorem}\label{Result1}
		Let Assumptions \ref{Mainassump} and \ref{Mainassump1} be satisfied, and let $p\in [1,\infty]$. Assume that
		\begin{enumerate}
			\item[(a)]  $\bar{l},\ \bar{r},\ \bar{\beta},\ \overline{{\rm Var}}<\infty$ and  $\underline{l} >0$.
			\item[(b)] The Stieltjes measure ${\rm d}\eta_j(\theta)$ is a positive operator-valued measure for all $j\in \mathcal{J}$.
		\end{enumerate}
		Then, system \eqref{Eqt1}-\eqref{Eqt2} is exponentially $L^p$-ISS if and only if
		\begin{align}\label{Cond-ISS}
			r(\Lambda) < 1.
		\end{align}
	\end{theorem}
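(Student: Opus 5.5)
The plan is to apply Theorem~\ref{Main2} to the reformulation \eqref{Main-system1}. This requires verifying assumptions {\bf (A1)}--{\bf (A4)} of Theorem~\ref{Main1} on the Banach lattices $X=X^{\mathsf{e}}\times X^{o}$, $Y={\bm \ell}(V)\times{\bm \ell}(V)$ and $U=V$, and then showing that the spectral condition \eqref{characterization} reduces to $r(\Lambda)<1$.

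\textbf{The unperturbed operator.} The restriction $A=\tilde A|_{\ker G}$ is block diagonal. Its first block is $-v\partial_x-q_j$ with zero inflow at $x=0$ on each edge. Its second block is $\partial_\theta$ with $\varphi(0)=0$ on $[-r_j,0]$. Both generate explicit positive nilpotent-type semigroups: the edge flows vanish after time $\bar l/v_{\min}$, and the shifts vanish after time $\bar r$. By assumption (a) these bounds are uniform in $j$. Consequently $\sigma(A)=\emptyset$ and $s(A)=-\infty<0$, and the resolvent $R(\lambda,A)$ is positive and given by explicit integral formulas.

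For the lower bound \eqref{inverse2} on $X_+$, I would integrate the formulas in $L^1$. For $f\ge0$, Fubini gives $\|R(\lambda,A_{\mathsf e})f\|\ge \int\int_0^{l_j}\frac1v\int_y^{l_j}e^{-(\lambda+\gamma_2)(x-y)/v}\,dx\,f(y)\,dy$. This produces a constant comparable to $1/(\lambda+\gamma_2)$ only up to edge effects near $x=l_j$. To obtain a uniform $c>0$ I would fix $\lambda=\lambda_0$. Because the estimate degrades near the outflow end, \textbf{I expect this step to be the most delicate}. If needed I would use the version of the estimate at a single $\lambda_0$, as in Lemma~\ref{Lem-admi}, combined with monotonicity of $R(\lambda,A)$ in $\lambda$ on the positive cone.

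\textbf{Dirichlet operators and positivity.} The Dirichlet operator is $D_\lambda=\operatorname{diag}(\Xi_\lambda,e_\lambda)$, which is positive and bounded; this gives {\bf (A2)}. Surjectivity of $G$ follows because $D_\lambda$ is a right inverse. For {\bf (A3)}, positivity of $P$ comes from $w_{ij}>0$, $\beta_j\ge0$ and hypothesis (b). The operator $L$ is bounded on ${\bm D}(\tilde A_\theta)$ via the Riesz--Stieltjes integral, using $\overline{\rm Var}$ and $\bar\beta$. Boundedness of $\mathcal F$ on the trace space requires that $M$ be bounded on $\ell^1$; I would take this as implicit, since $\Lambda$ is required to be bounded. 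The product has the block form
\[
PD_\lambda=\begin{pmatrix}0& L e_\lambda\\ \mathcal F\Xi_\lambda&0\end{pmatrix},
\qquad
(PD_\lambda)^2=\operatorname{diag}\bigl(Le_\lambda\mathcal F\Xi_\lambda,\ \mathcal F\Xi_\lambda Le_\lambda\bigr).
\]
From this, $r(PD_\lambda)^2=r(Le_\lambda\mathcal F\Xi_\lambda)$, using $r(ST)=r(TS)$.

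As $\lambda\to\infty$, the factors $\Xi_\lambda$ contain $e^{-\lambda l_j/v}\le e^{-\lambda\underline l/v_{\max}}$, so $\|PD_\lambda\|\to0$. This yields {\bf (A3)} for large $\lambda_1$ and also {\bf (A4)}, the latter even in norm.

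\textbf{Conclusion.} Theorem~\ref{Main2} then gives exponential $L^p$-ISS if and only if $r(PD_0)<1$, because $s(A)<0$ holds automatically. By the squaring identity above, $r(PD_0)<1$ is equivalent to $r(Le_0\mathcal F\Xi_0)=r(\Lambda)<1$. One point to check is that $\Xi_0$ is well defined even when $q_j$ takes negative values; this holds since $q_j\ge\gamma_1$ and $x\le\bar l$ give the uniform bound $\|\Xi_0\|\le e^{|\gamma_1|\bar l/v_{\min}}$.
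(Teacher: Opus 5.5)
Your overall route is the paper's. You verify \textbf{(A1)}--\textbf{(A4)}, apply Theorem~\ref{Main2} with $s(A)=-\infty$, and reduce $r(PD_0)<1$ to $r(\Lambda)<1$. Your reduction via $(PD_\lambda)^2=\operatorname{diag}(Le_\lambda\calF\Xi_\lambda,\ \calF\Xi_\lambda Le_\lambda)$ and $r(ST)=r(TS)$, i.e.\ $r(PD_0)^2=r(\Lambda)$, is cleaner than the factorization \eqref{factorization} and needs no positivity.

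The genuine gap is the step you flag yourself, and it cannot be closed: the lower bound \eqref{inverse2} on $X_+$ is false for this $A$, at every real $\lambda$. Since $T$ is nilpotent, $R(\lambda,A)=\int_0^\infty e^{-\lambda t}T(t)\,{\rm d}t$ for all $\lambda\in\R$. Take $f$ supported on a single edge $j$, $f_j(x,v)=\epsilon^{-1}\mathbf{1}_{(l_j-\epsilon,l_j)}(x)\,g(v)$ with $0\le g$, $\Vert g\Vert_V=1$. This mass leaves through $x=l_j$ before time $\epsilon/v_{\min}$, so $T(t)(f,0)^\top=0$ for $t\ge\epsilon/v_{\min}$. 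With $\Vert T(t)\Vert\le e^{\bar\gamma t}$ and $\bar\gamma=\max\{|\gamma_1|,|\gamma_2|\}$, this gives
\[
\Vert R(\lambda,A)(f,0)^\top\Vert\le\frac{\epsilon}{v_{\min}}\,e^{(|\lambda|+\bar\gamma)\epsilon/v_{\min}}\longrightarrow0\quad(\epsilon\to0),\qquad\Vert(f,0)^\top\Vert=1.
\]
The same happens on the delay block for $\varphi$ concentrated near $\theta=-r_j$. Your remedies do not address this. For every fixed $\lambda_0>-\gamma_2$ your own weight $\frac{1}{\lambda_0+\gamma_2}\bigl(1-e^{-(\lambda_0+\gamma_2)(l_j-y)/v}\bigr)$ still vanishes at $y=l_j$. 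Monotonicity of $\lambda\mapsto R(\lambda,A)$ on $X_+$ only transfers a bound from $\lambda_0$ to smaller $\lambda$; it cannot create one.

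Nor can you borrow the paper's Lemma~\ref{resolvent-estimates}, which asserts exactly this bound. After Fubini the weight is the one you computed, not $\frac{1}{\lambda+\gamma_2}\bigl(e^{(\gamma_2+\lambda)y/v_{\min}}-1\bigr)$. Moreover, the constant in \eqref{constant} is built from mean-value points $\alpha_j\in(0,l_j)$, $k_j\in(0,r_j)$ that depend on $f$ and $\varphi$, so it is not uniform. Hence Theorems~\ref{Main1}--\ref{Main2} do not apply as formulated, and a complete proof must bypass \textbf{(A1)}. One possibility: prove $L^1$-admissibility of the boundary input directly from the characteristic solution. Data entering at $x=0$ and at $\theta=0$ are transported with speed in $[v_{\min},v_{\max}]$, giving $\Vert\Phi_t^A v\Vert\le C\Vert v\Vert_{L^1(0,t;Y)}$ uniformly in $t$. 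Then handle the closed loop by a method-of-steps argument based on the minimal loop delay $\underline{l}/v_{\max}>0$, and only then use $s(\calA)<0\iff r(PD_0)<1$.

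A secondary slip concerns $\Vert PD_\lambda\Vert\to0$. We have $\Vert PD_\lambda\Vert\ge\Vert Le_\lambda\Vert$, and the block $Le_\lambda$ contains no factor $\Xi_\lambda$, so the decay $e^{-\lambda\underline{l}/v_{\max}}$ does not give $\Vert PD_\lambda\Vert\to0$; the paper uses Lemma~\ref{tool} for that block. Your squaring identity repairs \textbf{(A3)} anyway, since $r(PD_\lambda)^2\le\Vert Le_\lambda\Vert\,\Vert\calF\Xi_\lambda\Vert$, with $\Vert Le_\lambda\Vert$ bounded by a multiple of $\overline{{\rm Var}}$ for $\lambda\ge0$. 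And \textbf{(A4)} only needs $Le_\lambda h\to0$ for each fixed $h$. This follows by dominated convergence (in $\theta$ and over $j$), provided ${\rm d}\eta_j$ has no atom at $\theta=0$.
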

\begin{remark}
		Theorem  \ref{Result1} provides a full spectral characterization of exponential $L^p$-ISS for the kinetic transport network system \eqref{Eqt1}-\eqref{Eqt2}.   The condition $r(\Lambda)<1$ quantifies the net interaction strength at the central junction and ensures that no amplification loop is formed through the combined effects of scattering, network routing, and distributed delays. It is worth noting that the criterion requires no rationality assumptions on length or delay ratios. For $p = \infty$, it reduces to a SGC, guaranteeing robust stability with respect to bounded disturbances entering at the junction.
	\end{remark}
	 \emph{Proof of Theorem  \ref{Result1}:\enspace}
		We apply Theorem \ref{Main2} for the proof. In fact, it follows from Lemma \ref{resolvent-estimates} that restriction $A := \tilde{A}\vert_{\ker G}$ is a resolvent positive operator and satisfies the resolvent inverse estimate \eqref{inverse-estimate-transport}. Moreover, the operator $G$ defined in \eqref{G,Ga} is clearly surjective. A direct computation shows that the Dirichlet operator $D_\lambda$ associated with $(\tilde{A}, G)$ is given by
		\begin{align*}%\label{Dirich}
			D_\lambda(g,h)^{\top} = \begin{pmatrix}
				\Xi_\lambda g \\
				e_\lambda h
			\end{pmatrix},\quad \forall \lambda \in \C,\ (g,h)^{\top} \in \bm{\ell}(V)\times \bm{\ell}(V),
		\end{align*}
		where $\Xi_\lambda$ and $e_\lambda$ are the operators defined in \eqref{Dirichlet-operators}. The operator $D_\lambda$ is clearly positive for all $\lambda > 0$. Furthermore, condition (b) and the positivity of $\beta_j$ imply that the delay operators $L_j$ defined in \eqref{delay-operator} are positive for each $j$, hence the operator $P$ defined in \eqref{G,Ga} is positive.
		
		Now, let $\lambda > 0$, $g\in \bm{\ell}_+(V)$, and  $h\in {\bm{\ell}_+}(V)$. Using Assumptions \ref{Mainassump} and \ref{Mainassump1}, we obtain
	  \begin{align*}
				\Vert PD_\la(g, h)^{\top}\Vert
				&\le\sum_{j\in \mathcal{J}}\int_{v_{\min}}^{v_{\max}}{L}_{j}((e_\lambda h)_j)(v)\text{d} v
				+ \sum_{i\in \mathcal{J}}\sum_{j\in \mathcal{J}}w_{ij}\displaystyle\int_{v_{\min}}^{v_{\max}} (\Xi_\lambda g)_j(l_j,v) \text{d}v\\
				&\le  \bar{\beta}v_{\max}\ln\left(\frac{v_{\max}}{v_{\min}}\right)\sup_{j\in \mathcal{J}} \left\Vert \int_{-r_j}^{0} e^{\la \theta}{\rm d}\eta_j(\theta)\right\Vert_{\calL(V)} \Vert h\Vert_{{\bm \ell}(V)}
				+  e^{\frac{\bar{\gamma} \bar{l}}{v_{\min}}}e^{\frac{-\la \underline{l} }{v_{\max}}} \Vert M \Vert  \Vert g\Vert_{{\bm \ell}(V)},
				%	=& \bar{\beta}v_{\max}\ln(\frac{v_{\max}}{v_{\min}}) \sup_{i\in \mathcal{J}}\left\Vert\int_{-r_i}^{0} e^{\la \theta}{\rm d}\eta_i(\theta)\right\Vert_{\calL(V)} \Vert h\Vert_{{\bm \ell}^{\mathsf{e}}(V)}\\
				%	&+  e^{\frac{\vert \gamma\vert l_{\max} }{v_{\min}}}e^{\frac{-\la l_{\min} }{v_{\max}}} \Vert g\Vert_{{\bm \ell}^{\mathsf{n}}(V)},
		\end{align*}
		where $\bar{\gamma}:=\max\{\vert\gamma_1\vert,\vert\gamma_2\vert\}$. Using Lemma \ref{tool}, we then obtain
		\begin{align*}
			\lim_{\la \to +\infty} \Vert P D_\la\Vert =0.
		\end{align*}
		Therefore, there exists $\lambda_1 > 0$ large enough such that $\Vert P D_{\lambda_1} \Vert <1$.
		
		All assumptions of Theorem \ref{Main1} are satisfied; consequently, the kinetic transport network system \eqref{Eqt1}-\eqref{Eqt2} admits a unique solution $z \in {C}(\R_+, X^{\mathsf{e}})$ given by
		\begin{align}\label{Explicit}
			z(t,f,\varphi,u)=\left[\zeta(t,f,\varphi,u)\right]_1,\quad \forall  t \ge 0,f \in X^{\mathsf{e}},\varphi \in X^{o},u \in L^p_{loc}(\R_+; V),
		\end{align}
		where $\left[\zeta(t, f, \varphi, u)\right]_1$ denotes the first component of the solution to \eqref{Main-system1}.
		
		Now, applying Theorem \ref{Main2}, it follows that system \eqref{Eqt1}-\eqref{Eqt2} is exponentially $L^p$-ISS if and only if $r(PD_0) < 1$, since $s(A) <0$. Using the factorization
		\begin{align}\label{factorization}
			\mathcal{I}-PD_0&=\begin{pmatrix}
				\mathcal{I} & -{L} (e_0)\\
				-\calF \Xi_0 &  \mathcal{I}
			\end{pmatrix}
			=\begin{pmatrix}
				\mathcal{I}-\Lambda & - {L} (e_0)\\
				0& 	\mathcal{I}
			\end{pmatrix}			\begin{pmatrix}
				\mathcal{I} & 0\\
				-\calF\Xi_0  &  \mathcal{I}
			\end{pmatrix},	
		\end{align}
		we conclude that $1 \in \rho(PD_0)$ if and only if $1 \in \rho(\Lambda)$. Since both $PD_0$ and $\Lambda$ are positive operators, it follows that the kinetic transport network system \eqref{Eqt1}-\eqref{Eqt2} is exponentially $L^p$-ISS if and only if \eqref{Cond-ISS} holds. This completes the proof. \hfill $\blacksquare$\par

	The following result  provides an ISS estimate for the solutions of the system \eqref{Eqt1}-\eqref{Eqt2}.
	\begin{corollary}\label{Result2}
		Let the assumptions of Theorem \ref{Result1} be satisfied. Suppose that the scattering at junctions is mass-preserving, i.e.,
		\begin{align}\label{beta}
			\int_{v_{\min}}^{v_{\max}}\beta_j(v,v'){\rm{d}} v=1,\quad \forall j\in \mathcal{J}, \,v'\in [v_{\min},v_{\max}].
		\end{align}
		If
		\begin{align}\label{C1}
			\max\left\{\frac{v_{\max}}{v_{\min}}\ \overline{{\rm Var}} ,e^{\frac{ \bar{l}\bar{\gamma}}{v_{\min}}}\Vert M \Vert\right\}<1,
		\end{align}
		then there exist positive constants $N,a $ such that
		\begin{align}\label{ISS-estimate-trp}
			\left\Vert z(t)\right\Vert_{X^{\mathsf{e}}} \leq  Ne^{-a t}\left(\Vert f\Vert_{X^{\mathsf{e}}}+\Vert\varphi\Vert_{X^o}\right)+\frac{N e^{\frac{ \bar{l}\bar{\gamma}}{v_{\min}}}\Vert M \Vert}{ac(1-C)}\Vert u\Vert_{L^\infty(\R_+;V)},
		\end{align}
		for all $t\ge 0$, $f\in X^{\mathsf{e}}$, $\varphi\in X^{o}$, and $u\in L^\infty(\R_+;V)$, where $c$ is the constant given by \eqref{constant} and
		$$C:=\max\left\{\frac{v_{\max}}{v_{\min}}\ \overline{{\rm Var}},e^{\frac{ \bar{l}\bar{\gamma}}{v_{\min}}}\Vert M \Vert\right\}.$$
	\end{corollary}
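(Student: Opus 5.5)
The plan is to apply the quantitative part of Theorem \ref{Main2} with $p=\infty$ to the reformulated system \eqref{Main-system1}. The first task is to show $\Vert PD_0\Vert\le C<1$. The ISS estimate \eqref{ISS-estimate} then holds for the augmented state $\zeta$, and we project onto the first component as in \eqref{Explicit}. The proof of Theorem \ref{Result1} already verified all hypotheses of Theorem \ref{Main1}, and Lemma \ref{resolvent-estimates} provides the constant $c$ in \eqref{constant}. It therefore remains to show three things: $s(A)<0$, $\Vert PD_0\Vert\le C$, and a bound on $\Vert K\Vert\,\Vert D_0\Vert$ (or better, directly on $\Vert R(0,\calA_{-1})\calB\Vert$) that produces the gain $e^{\bar l\bar\gamma/v_{\min}}\Vert M\Vert/(1-C)$.

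\textbf{Step 1: bound $PD_0$ on the positive cone.} Since $P$ and $D_0$ are positive, it suffices to estimate $\Vert PD_0(g,h)^\top\Vert$ for $g,h\ge0$. The second block is $\calF\Xi_0 g = M\big((\Xi_0g)_j(l_j,\cdot)\big)_j$. Because $(\Xi_0 g)_j(l_j,v)\le e^{\bar\gamma\bar l/v_{\min}}g_j(v)$, this block has norm at most $e^{\bar l\bar\gamma/v_{\min}}\Vert M\Vert\,\Vert g\Vert$.

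The first block is $L(e_0h)$, i.e. $L_j h_j = \frac1v\int_{-r_j}^0 d\eta_j(\theta)\int\beta_j(v,v')v'h_j(v')\,dv'$ with $h_j$ constant in $\theta$. We integrate in $v$ and use the mass preservation \eqref{beta} together with $1/v\le 1/v_{\min}$ and $v'\le v_{\max}$. Since the measure is positive, this gives $\Vert L_j e_0 h_j\Vert_V\le \frac{v_{\max}}{v_{\min}}\,{\rm Var}(\eta_j)\,\Vert h_j\Vert_V$. The delicate point is interchanging the positive operator-valued Stieltjes integral with the velocity integral, which we justify via Assumption \ref{Mainassump1}.

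Because $P$ is off-diagonal, the norm on $\bm\ell(V)\times\bm\ell(V)$ (the sum norm) gives $\Vert PD_0\Vert\le\max\{\cdot,\cdot\}=C<1$. Positivity then yields $r(PD_0)\le C<1$.

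\textbf{Step 2: show $s(A)<0$ and obtain exponential stability.} Here $A$ consists of transport with boundary condition zero at the inflow together with the pure shift on the delay space with zero boundary. This is a nilpotent semigroup, since $\underline l>0$ and $\bar r<\infty$, so $s(A)=-\infty$. Theorem \ref{Main2} then yields $N,a$ with $\Vert\calT(t)\Vert\le Ne^{-at}$.

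\textbf{Step 3: sharpen the gain (main obstacle).} Using the bound of Theorem \ref{Main2} verbatim would give the factor $\Vert D_0\Vert\,\Vert K\Vert$, and $\Vert D_0\Vert$ involves $\bar l e^{\bar\gamma\bar l/v_{\min}}/v_{\min}$, which does not match \eqref{ISS-estimate-trp}. I would instead bound $R(0,\calA_{-1})\calB u = D_0(\mathcal I-PD_0)^{-1}Ku$ more carefully. The quantity in the formula appears to be the boundary flux norm, so I would track, through the Neumann series, only the $\bm\ell(V)$-component that matters: $Ku=(Mu,0)$ has norm at most $\Vert M\Vert\,\Vert u\Vert$ (more precisely $\Vert M\mathbf 1\Vert$), and the Neumann series contributes $1/(1-C)$. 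The factor $e^{\bar l\bar\gamma/v_{\min}}$ comes from the transport part of $D_0$ after it has been absorbed into the resolvent-inverse constant $c$ of \eqref{constant}, following the mechanism of the proof of Lemma \ref{Lem-admi} with $\lambda_0$ chosen as in Lemma \ref{resolvent-estimates}. I would first try to carry out this redistribution of constants between $c$ and $D_0$. If it fails, I would accept a constant that differs by a harmless factor.

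Finally, combining \eqref{Expon-sta} with the $p=\infty$ input-map bound via \eqref{variation1}, and taking first components with $\Vert(f,\varphi)\Vert=\Vert f\Vert+\Vert\varphi\Vert$, gives \eqref{ISS-estimate-trp}.
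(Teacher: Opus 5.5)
\textbf{Gap in Step 3.} Your Steps 1--2 and the final assembly follow the paper's argument. The paper bounds $PD_0$ on the positive cone block by block: mass preservation \eqref{beta} handles $Le_0$, and the pointwise bound $(\Xi_0 g)_j(l_j,\cdot)\le e^{\bar l\bar\gamma/v_{\min}}g_j$ together with $\Vert M\Vert$ handles $\calF\Xi_0$. It concludes $\Vert PD_0\Vert\le C<1$, gets exponential ISS from Theorem~\ref{Result1}, and projects onto the first component. For the gain, the paper simply asserts $\Vert D_0\Vert\le e^{\bar l\bar\gamma/v_{\min}}$ and $\Vert K\Vert\le\Vert M\Vert$, and inserts these into $\frac{\Vert K\Vert\Vert D_0\Vert}{1-\Vert PD_0\Vert}\cdot\frac{N}{ac}$ from Theorem~\ref{Main2}.

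Your suspicion in Step 3 is justified and points exactly at this spot:
\begin{itemize}
\item In the $L^1(0,l_j;V)$ norm one only has $\Vert\Xi_0 g\Vert_{X^{\mathsf{e}}}\le\sum_j l_j e^{l_j\bar\gamma/v_{\min}}\Vert g_j\Vert_V$. For $q\equiv 0$ one may take $\bar\gamma=0$, yet $\Vert\Xi_0\Vert=\bar l$.
\item The block $e_0$ has norm $\bar r$. So in general only $\Vert D_0\Vert\le\max\{\bar l e^{\bar l\bar\gamma/v_{\min}},\bar r\}$. There is no factor $1/v_{\min}$, since $G=\delta_0$ carries no velocity weight.
\item Reading \eqref{Eqt2}, $\Vert K\Vert=\sum_{i,j}w_{ij}$, which can exceed $\Vert M\Vert$.
\end{itemize}

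However, the repair you sketch is not an argument. In Theorem~\ref{Main2}, the constant $c$ of \eqref{constant} and $\Vert D_0\Vert$ enter as separate factors. You give no computation that moves the factor $\bar l$, which comes from integrating over $x$ in $X^{\mathsf{e}}$, into $c$. Nor can you track only a boundary-flux component, because $R(0,\calA_{-1})\calB u=D_0(\mathcal{I}-PD_0)^{-1}Ku$ is measured in $X$. As written, Step 3 does not yield \eqref{ISS-estimate-trp}, and ``accepting a constant that differs by a harmless factor'' proves a different inequality.

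\textbf{The missing observation.} The constant $N$ is existentially quantified, multiplies both terms of \eqref{ISS-estimate-trp}, and can be enlarged freely. Indeed, $\Vert\calT(t)\Vert\le N_0e^{-at}$ stays true with $N_0$ replaced by any larger number. So proceed as follows:
\begin{itemize}
\item Apply Theorem~\ref{Main2} with its constants $N_0,a$.
\item Bound the gain honestly by $\frac{N_0\max\{\bar l e^{\bar l\bar\gamma/v_{\min}},\bar r\}\Vert K\Vert}{ac(1-C)}$.
\item Set $N:=N_0\max\{1,\ \max\{\bar l,\bar r e^{-\bar l\bar\gamma/v_{\min}}\}\Vert K\Vert/\Vert M\Vert\}$. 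This is finite because $K\in\calL(U,Y)$ is a standing requirement and $\Vert M\Vert>0$.
\end{itemize}
This gives \eqref{ISS-estimate-trp} verbatim. With this step your fallback becomes a complete proof, relative to Theorem~\ref{Main2} and Lemma~\ref{resolvent-estimates}, which both you and the paper use as black boxes. It is also more careful than the paper's proof. It shows as well that the particular form of the gain in \eqref{ISS-estimate-trp} carries no information beyond the existence of some linear gain.

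\textbf{Minor slip.} Uniform nilpotency of $T$ comes from $\bar l<\infty$ and $\bar r<\infty$, not from $\underline l>0$: all mass has left the edges and the delay intervals by time $\max\{\bar l/v_{\min},\bar r\}$.
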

	
	\begin{proof}
		Using Assumptions \ref{Mainassump} and \ref{Mainassump1}, we obtain
		{\small
        \begin{align*}
			\Vert PD_0(g, h)^{\top}\Vert
			\le & \overline{{\rm Var}} \frac{v_{\max}}{v_{\min}}\sum_{j\in \mathcal{J}}\int_{v_{\min}}^{v_{\max}}\int_{v_{\min}}^{v_{\max}}\beta_j(v,v'){\rm d}vh_j(v')  {\rm d}v'
			+e^{\frac{-\gamma_1 \bar{l}}{v_{\min}}}\sum_{i\in \mathcal{J}}\sum_{j\in \mathcal{J}} w_{ij}\Vert g_j\Vert_V\\
			=& \overline{{\rm Var}}\ \frac{v_{\max}}{v_{\min}}\Vert h\Vert_{{\bm \ell}(V)}
			+ e^{\frac{ \bar{l}\bar{\gamma}}{v_{\min}}}\Vert M \Vert\Vert g\Vert_{{{\bm \ell}}(V)},
			%\\\le& \max\left\{\overline{{\rm Var}} \frac{v_{\max}}{v_{\min}},\frac{-\gamma_1 \bar{l}}{v_{\min}}\Vert M \Vert\right\}\Vert (g, h)^{\top}\Vert.
		\end{align*}}for all $g\in {\bm \ell}_+(V)$ and $h\in {\bm \ell}_+(V)$,
		where we used the additivity of the $L^1$-norm on the positive cone and the condition \eqref{beta} to obtain the equality in the second line. Therefore,
		\begin{align}\label{ESTIMATE}
			\Vert PD_0 \Vert\le \max\left\{\overline{{\rm Var}}\ \frac{v_{\max}}{v_{\min}},e^{\frac{ \bar{l}\bar{\gamma}}{v_{\min}}}\Vert M \Vert\right\}.
		\end{align}
		If \eqref{C1} holds, then $\Vert PD_0\Vert<1$, and hence $r(\Lambda)<1$. By Theorem \ref{Result1}, the kinetic transport network system \eqref{Eqt1}-\eqref{Eqt2} is exponentially $L^p$-ISS. Moreover, we have
		\begin{align}\label{Diri-ineq}
			\Vert D_0\Vert\le e^{\frac{ \bar{l}\bar{\gamma}}{v_{\min}}},\qquad \Vert K\Vert\le \Vert M \Vert.
		\end{align}
		Combining \eqref{ESTIMATE} and \eqref{Diri-ineq} through application of Theorem \ref{Main2}, there exist constants $N,a,c,C>0$ such that
		\begin{align*}%\label{Exp-ISS}
			\Vert \zeta(t,f,\varphi,u)\Vert_{X}\le Ne^{-a t}\left\Vert (f,\varphi)^\top\right\Vert_{X}+\frac{N e^{\frac{ \bar{l}\bar{\gamma}}{v_{\min}}}\Vert M \Vert}{ac(1-C)}\Vert u\Vert_{L^\infty(\R_+;V)},
		\end{align*}
		for all $t\ge 0$, $f\in X^{\mathsf{e}}$, $\varphi\in X^{o}$, and $u\in L^\infty(\R_+;V)$. Applying \eqref{Explicit} then yields the ISS estimate \eqref{ISS-estimate-trp}, which completes the proof.
	\end{proof}

	We conclude with two examples that illustrate our results for particular operator-valued functions $\eta_j$.
	\begin{example}
		Consider the case where
		\begin{align*}
			\emph{d}\eta_j(\theta):=\delta_{-r_j}^{X^o}(\theta)\emph{d}\theta,\qquad \forall j\in \mathcal{J},
		\end{align*}
        where {$\delta_{-r_j}^{X^o}$ is the Dirac mass at $-r_j$}. These are positive operator-valued measures with ${{\rm Var}}(\eta_j;-r_{j},0)=1$ for all $j\in \mathcal{J}$.  If $\bar{r}<\infty$, then $\overline{{\rm Var}}=1$. By Theorem \ref{Result1}, the kinetic transport network system \eqref{Eqt1}-\eqref{Eqt2} is exponentially $L^p$-ISS if and only if $r(\Lambda)<1$, where in this case
		\begin{align*}
			(\Lambda g)_i(v)=\frac{1}{v}\int_{v_{\min}}^{v_{\max}}\beta_j(v,v')v'\sum_{j\in \mathcal{J}}w_{ij}
			{{\exp}}\left(-\int_0^{l_j} \frac{ q_j(y,v')}{v'} {\rm d}y  \right)g(v'){\rm d}v',
		\end{align*}
		for all $i\in \mathcal{J}$, $g\in {\bm \ell}(V)$, and a.e. $v\in [v_{\min},v_{\max}]$. The spectral radius $r(\Lambda)<1$ can be estimated by
		\begin{align*}
			r(\Lambda)\le \Vert \Lambda\Vert\le \bar{\beta}v_{\max}\ln\left(\frac{v_{max}}{v_{\min}}\right)e^{\frac{\bar{\gamma}\bar{l}}{v_{\min}}}\Vert M \Vert.
		\end{align*}
		Any choice of parameters satisfying $	\bar{\beta}v_{\max}\ln(\frac{v_{max}}{v_{\min}})e^{\frac{\bar{\gamma}\bar{l}}{v_{\min}}}\Vert M \Vert<1$
		ensures $r(\Lambda)<1$.
	\end{example}
	
	\begin{example}
		Consider the case where
		\begin{align*}
			 \emph{d}\eta_j:=e^{\vartheta_j\theta}\emph{d}\theta,\qquad \forall j\in \mathcal{J},
		\end{align*}
        		where $0\neq\vartheta_j\in \R$. Observe that ${\rm d}\eta_j$ are positive operator-valued measures with ${{\rm Var}}(\eta_j;-r_{j},0)=\frac{1}{\vartheta_j}(1-e^{-\vartheta_jr_j})$ for all $j\in \mathcal{J}$. If $\bar{r}<\infty$ and $0< \underline{\vartheta}:=\inf_{j\in \mathcal{J}}\vartheta_j $, then $\overline{{\rm Var}}\le \bar{r}$. By Theorem~\ref{Result1}, the kinetic transport network system \eqref{Eqt1}-\eqref{Eqt2} is exponentially $L^p$-ISS if and only if $r(\Lambda)<1$, where in this case
		\begin{align*}
			(\Lambda g)_i(v)=\frac{1}{\vartheta_j}(1-e^{-\vartheta_jr_j})\frac{1}{v}\int_{v_{\min}}^{v_{\max}}\beta_j(v,v')v'\sum_{j\in \mathcal{J}}w_{ij} {{\exp}}\left(-\int_0^{l_j} \frac{ q_j(y,v')}{v'} {\rm d}y  \right)g(v'){\rm d}v',
		\end{align*}
		for all $i\in \mathcal{J}$, $g\in {\bm \ell}(V)$, and a.e. $v\in [v_{\min},v_{\max}]$. If the condition \eqref{beta} holds, then the spectral radius $r(\Lambda)<1$ can be estimated by
		 \begin{align*}
			 r(\Lambda)\le \Vert \Lambda\Vert\le \bar{r}\frac{v_{max}}{v_{\min}}e^{\frac{\bar{\gamma}\bar{l}}{v_{\min}}}\Vert M \Vert.
		 \end{align*}
		Thus, by choosing parameters such that $\bar{r}\frac{v_{max}}{v_{\min}}e^{\frac{\bar{\gamma}\bar{l}}{v_{\min}}}\Vert M \Vert<1$, we obtain $r(\Lambda)<1$. Furthermore, if $\Vert M \Vert<e^{-\frac{\bar{\gamma}\bar{l}}{v_{\min}}}$ and $ \bar{r}<\frac{v_{\min}}{v_{\max}}$, then it follows from Corollary \ref{Result2} that the ISS estimate \eqref{ISS-estimate-trp} holds with $C=\max\{\frac{v_{\max}}{v_{\min}}\ \bar{r},e^{\frac{ \bar{l}\bar{\gamma}}{v_{\min}}}\Vert M \Vert\}$.
	\end{example}

	\appendix
	
	\section{Technical lemmas}\label{App1}
	Here we provide the proofs for the technical results used in the proof of Theorem \ref{Result1}.
	
	\begin{lemma}\label{resolvent-estimates}
		Let Assumption \ref{Mainassump} be satisfied, and suppose $\bar{l},\,\bar{r}<\infty$. Then, ${A}:=\tilde{A}|_{ \ker G}$  {is a} resolvent positive operator such that
		\begin{align}\label{inverse-estimate-transport}
			\|R(\lambda,{A})(f,\varphi)^{\top}\|\geq c\|(f,\varphi)^{\top}\|,
		\end{align}
		for all $(f,\varphi)^{\top}\in X_+$ and $\lambda>0\vee ({-\gamma_2})$, where the positive constant $c$ is given by
		\begin{align}\label{constant}
			c:=\inf\left\{\frac{1}{\la+\gamma_2}\left[ {{\exp}}\left(\frac{(\gamma_2+\la)}{v_{\min}}\bar{\alpha}\right)-1\right], \frac{1}{\lambda}	\left(1-{{\exp}}(-\lambda (\bar{k}+\bar{r}))\right)\right\}.
		\end{align}
	\end{lemma}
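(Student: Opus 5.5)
The plan is to compute the resolvent of $A=\tilde{A}|_{\ker G}$ explicitly, read off its positivity, and then bound it from below on the positive cone using the $L^1$-additivity of the norm.

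\textbf{Structure of $A$.} By \eqref{A_M} and \eqref{G,Ga}, $\ker G$ consists of pairs $(f,\varphi)^\top$ with $f_j(0,\cdot)=0$ and $\varphi_j(0,\cdot)=0$ for every $j\in\mathcal{J}$. Hence $A$ is block diagonal, $A=\operatorname{diag}(A_{\mathsf e},A_\theta)$. Here $A_{\mathsf e}$ is the transport operator with zero inflow at $x=0$, acting on each edge separately. $A_\theta=\partial_\theta$ has the condition $\varphi(0)=0$, acting on each delay interval.

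\textbf{Explicit resolvents.} For $\lambda>0\vee(-\gamma_2)$, solving $\lambda f_j+v\partial_x f_j+q_jf_j=g_j$ with $f_j(0,v)=0$ gives
\[
(R(\lambda,A_{\mathsf e})g)_j(x,v)=\frac1v\int_0^x \exp\Bigl(-\int_y^x\frac{\lambda+q_j(s,v)}{v}\,{\rm d}s\Bigr)g_j(y,v)\,{\rm d}y .
\]
Solving $\lambda\varphi_j-\partial_\theta\varphi_j=\psi_j$ with $\varphi_j(0)=0$ gives
\[
(R(\lambda,A_\theta)\psi)_j(\theta,v)=\int_\theta^0 e^{\lambda(\theta-\sigma)}\psi_j(\sigma,v)\,{\rm d}\sigma .
\]
Both kernels are nonnegative. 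The integrals are bounded uniformly in $j$ thanks to \eqref{gamma} and $\bar l,\bar r<\infty$, which gives membership in $\mathcal L(X)$. This shows $\lambda\in\rho(A)$ and $R(\lambda,A)\ge0$, so $A$ is resolvent positive. Density of the domain is standard, using finitely supported sequences of smooth compactly supported functions.

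\textbf{Lower bound on the cone.} For $(g,\psi)\ge0$ the norm is additive, so $\|R(\lambda,A)(g,\psi)\|$ is the integral of the resolvent over all variables. By Fubini, the edge part equals
\[
\sum_j\int_{v_{\min}}^{v_{\max}}\int_0^{l_j}g_j(y,v)\,\Bigl[\frac1v\int_y^{l_j}\exp\Bigl(-\int_y^x\frac{\lambda+q_j}{v}\Bigr){\rm d}x\Bigr]{\rm d}y\,{\rm d}v .
\]
We bound the bracket from below using $q_j\le\gamma_2$. The exponent is then at least $-(\lambda+\gamma_2)(x-y)/v$, and the bracket is at least $\frac{1}{\lambda+\gamma_2}\bigl(1-e^{-(\lambda+\gamma_2)(l_j-y)/v}\bigr)$. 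The delay part similarly gives the factor $\frac1\lambda\bigl(1-e^{-\lambda(\sigma+r_j)}\bigr)$.

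\textbf{Main obstacle.} These weights degenerate as $y\to l_j$ (respectively $\sigma\to -r_j$). So $g$ concentrated near the outflow end makes the quotient arbitrarily small, and a pointwise lower bound uniform in $y$ fails. Resolving this is the delicate step; it is where the quantities $\bar\alpha$ and $\bar k$ in \eqref{constant} must come from. What I would try first is to reconcile the estimate with the paper's constant. For instance, one could reinterpret the geometry, or use a periodic/circle identification on each edge that removes the degenerate endpoint. One would then bound $\inf_y$ of the weight by the stated expressions $\frac{1}{\lambda+\gamma_2}\bigl[\exp\bigl(\frac{\gamma_2+\lambda}{v_{\min}}\bar\alpha\bigr)-1\bigr]$ and $\frac1\lambda\bigl(1-e^{-\lambda(\bar k+\bar r)}\bigr)$. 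Taking the minimum of the two block constants would then yield \eqref{inverse-estimate-transport}.
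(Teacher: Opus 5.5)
Your explicit resolvents, the resolvent positivity you derive from them, and your Fubini weights are correct, and they coincide with what the paper computes. The gap is the inverse estimate. Your last paragraph does not prove it, and it cannot be proved, because the degeneration you noticed is a genuine counterexample. Using $q_j\ge\gamma_1$, your edge weight satisfies $w_j(y,v)=\frac{1}{v}\int_y^{l_j}\exp\bigl(-\int_y^x\frac{\lambda+q_j(s,v)}{v}{\rm d}s\bigr){\rm d}x\le\frac{l_j-y}{v_{\min}}e^{|\lambda+\gamma_1|(l_j-y)/v_{\min}}$. Take a single nonzero component $f_j=\chi_{[l_j-\varepsilon,l_j]}(y)$ and $\varphi=0$. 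Then $\|R(\lambda,A)(f,0)^{\top}\|\le\frac{\varepsilon}{v_{\min}}e^{|\lambda+\gamma_1|\varepsilon/v_{\min}}\|(f,0)^{\top}\|$. Likewise, $\varphi_j=\chi_{[-r_j,-r_j+\varepsilon]}(\theta)$ with $f=0$ gives $\|R(\lambda,A)(0,\varphi)^{\top}\|\le\varepsilon\|(0,\varphi)^{\top}\|$, since $\frac{1}{\lambda}(1-e^{-\lambda s})\le s$ for $\lambda>0$. Dynamically, mass placed within $\varepsilon$ of an outflow end leaves in time $O(\varepsilon)$, and on $X_+$ one has $\|R(\lambda,A)x\|=\int_0^\infty e^{-\lambda t}\|T(t)x\|\,{\rm d}t$. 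Hence for no admissible $\lambda$ does a $c>0$ as in \eqref{inverse-estimate-transport} exist; only the resolvent-positivity part of the statement is true. Your proposed repairs do not help either. $A=\tilde{A}|_{\ker G}$ has decoupled edges with zero inflow at $x=0$ and $\theta=0$. Identifying the two ends of a circle would bring in the coupling $P$, which changes the operator.

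There is also nothing to reconcile with in the paper's constant, because the paper's argument fails at exactly this step. After Fubini it replaces the correct weight $\frac{1}{\lambda+\gamma_2}\bigl[1-e^{-(\lambda+\gamma_2)(l_j-y)/v}\bigr]$ by $\frac{1}{\lambda+\gamma_2}\bigl[\exp\bigl(\frac{(\gamma_2+\lambda)}{v_{\min}}y\bigr)-1\bigr]$. The latter is strictly positive at $y=l_j$, where the true weight vanishes, so that inequality is false for $f$ concentrated near $l_j$. The paper then invokes the mean value theorem, which produces a point $\alpha_j$ depending on $f_j(\cdot,v)$, and replaces it by $\bar\alpha=\sup_j\alpha_j$. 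For an increasing weight this gives an upper bound rather than a lower bound, and in any case no constant independent of $(f,\varphi)$; the same applies to $k_j$ and $\bar k$. Your instinct about the degenerate weight was right. The correct conclusion is that \eqref{inverse-estimate-transport} with the constant \eqref{constant} fails as stated, not that a sharper bound is needed.
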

	
	\begin{proof} %:=(A_{\mathsf{e}})|_{ \ker \calF }
		Let $\tilde{A}_{\mathsf{e}}$ be the operator defined in \eqref{Trp-op}. Then, the domain of the restriction operator ${A}_{\mathsf{e}}:=(\tilde{A}_{\mathsf{e}})\vert_{\ker \delta_{0}^{X^{\mathsf{e}}}}$ is given by
		%\begin{align*}
		${\bm D}(A_{\mathsf{e}})= \{ f \in {\bm D}(\tilde{A}_{\mathsf{e}}): f(0,\cdot)=0 \}.$
		%\end{align*}
		A direct computation shows that its resolvent is given by
		\begin{align*}
			\big(R(\lambda,A_{\mathsf{e}}) f\big)_j(x,v)
			= \frac{1}{v} \int_0^x {{\exp}}\left( -\int_y^x \frac{\lambda + q_j(s,v)}{v}  {\rm{d}}s \right) f_j(y,v)  {\rm d}y,
		\end{align*}
		for all $j\in \mathcal{J}$, $\la\in \C$, $f\in X^{\mathsf{e}}$, and a.e. $(x,v)\in \Omega_j$.
		
		Now, let $f\in X_+^{\mathsf{e}}$ and {$\la>-\gamma_2$}, where $\gamma_2$ is the constant from \eqref{gamma}. Using the additivity of the $\Vert\cdot\Vert_{X^{\mathsf{e}}}$-norm on $X_+^{\mathsf{e}}$, and Fubini's theorem, we obtain
		\begin{align*}
			\Vert R(\la,{A}_{{\mathsf{e}}})f\Vert_{X}
			%= &\sum_{j\in \mathcal{J}}\int_{v_{\min}}^{v_{\max}}\int_{0}^{l_j} \quad (R(\la,{A}_{{\mathsf{e}}})f)_j(x,v) {\rm d}x{\rm d}v\\
			\ge&  \sum_{j\in \mathcal{J}}\int_{v_{\min}}^{v_{\max}}\int_{0}^{l_j} \int_{0}^{x}		{{\exp}}\left(\frac{-(\gamma_2 +\la)}{v}(x-y)\right)\frac{1}{v}f_j(y,v){\rm d}y{\rm d}x{\rm d}v\\
			\ge &	\frac{ 1}{\la +\gamma_2}\sum_{j\in \mathcal{J}}\int_{v_{\min}}^{v_{\max}}\int_{0}^{l_j}     \left[{{\exp}}\left(\frac{(\gamma_2+\la)}{v_{\min}}y\right)-1 \right]f_j(y,v){\rm d}y{\rm d}v.
		\end{align*}
		By the First Mean Value Theorem for Integrals (see, e.g., \cite[Thm. 85.6]{Heuser} and \cite[Rem. 2.1]{Diethelm}), there exists $\alpha_j\in (0,l_j)$ such that
		\begin{align*}
			\displaystyle\int_{0}^{l_j}\left[{{\exp}}\left(\frac{(\gamma_2+\la)}{v_{\min}}y\right)-1\right] f_j(y,\cdot){\rm d}y=
			\left[ {{\exp}}\left(\frac{(\gamma_2+\la)}{v_{\min}}\alpha_j\right)-1\right] \int_{0}^{l_j}f_j(y,\cdot){\rm d}y,
		\end{align*}
		for all $j\in \mathcal{J}$, $ f_j(\cdot,v)\in L^1_+(0,l_j)$, and a.e. $v\in [v_{\max},v_{\min}]$. Therefore,
		\begin{align}\label{inverse-A}
			\Vert R(\la,{A}_{{\mathsf{e}}})f\Vert_{X^{\mathsf{e}}}\ge \frac{1}{\la+\gamma_2}\left[ {{\exp}}\left(\frac{(\gamma_2+\la)}{v_{\min}}\bar{\alpha}\right)-1\right]\Vert f\Vert_{X^{\mathsf{e}}},
		\end{align}
		for all {$\la>-\gamma_2$} and $f\in X_+^{\mathsf{e}}$, where $\bar{\alpha}:=\sup_{j\in \mathcal{J}} \alpha_j$.
		
		Next, consider ${A}_{\theta}:=(A_{\theta})\vert_{\ker \delta_{0}^{X^{o}} }$. Its resolvent is given by
		\begin{align*}
			(R(\la,{A}_{\theta})\varphi)_j(\theta,v)=\int_{\theta}^{0} e^{(\theta-\sigma)\la}\varphi_j(\sigma,v){\rm d}\sigma,
		\end{align*}
		for all $j\in \mathcal{J}$, $\la\in \C$, $\varphi\in X^{o}$, and a.e. $(\theta,v)\in \Theta_j$. A similar argument, using the additivity of the $\Vert\cdot\Vert_{X^{o}}$-norm on $X_+^{o}$, Fubini's theorem, and the First Mean Value Theorem for Integrals, yields
		\begin{align}\label{inverse-Q}
			\Vert R(\lambda,{A}_{\theta})\varphi\Vert_{X^{o}}\ge  \frac{1}{\lambda}	\left(1-{{\exp}}(-\lambda (\bar{k}+\bar{r}))\right)\Vert \varphi\Vert_{X^{o}},
		\end{align}
		for all $\lambda>0$ and $\varphi\in X_+^{o}$, where $\bar{k}:=\sup_{j\in \mathcal{J}} k_j$, for some $k_j\in (0,r_j)$.
		
		Now, observe that the operator ${A}$ has the diagonal structure
		\begin{align}\label{A-diag}
			{A}=\operatorname{diag}({A}_{\mathsf{e}},{A}_{\theta}).
		\end{align}
		From this, it follows that $s({A})=-\infty$ and $R(\la,{A})\ge 0$ for all $\la>-\infty$; that is, $A$ is resolvent positive. Combining the estimates \eqref{inverse-A} and \eqref{inverse-Q}, we obtain the resolvent inverse estimate \eqref{inverse-estimate-transport}. This completes the proof.
	\end{proof}
	
	\begin{lemma}\label{tool}
		Let Assumption \ref{Mainassump1} be satisfied and assume that for each $ j \in \mathcal{J} $, the Stieltjes measure $  \emph{d} \eta_j(\theta)$ is a positive operator-valued measure. If $ \bar{r}, \overline{{\rm Var}}< \infty $, then
		\begin{align*}
			\lim_{\lambda \to \infty} \sup_{i \in \mathcal{M}} \left\| \int_{-r_i}^{0} e^{\lambda \theta} \emph{d}\eta_i(\theta) \right\|_{\mathcal{L}(V)} = 0.
		\end{align*}
	\end{lemma}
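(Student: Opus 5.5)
The plan is to split the integral at a small distance $\delta>0$ from the endpoint $0$. The far part is killed by the exponential factor $e^{\lambda\theta}$. The near part is controlled by left-continuity of $\eta_i$ at $0$, which rules out an atom of ${\rm d}\eta_i$ at $\theta=0$. Positivity of the measures is used to replace total variation by the norm of an increment of $\eta_i$.

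\emph{Step 1 (positivity reduction).} Fix $i$ and $\lambda>0$. Since ${\rm d}\eta_i$ is a positive operator-valued measure on the Banach lattice $V=L^1(v_{\min},v_{\max})$, for $h\in V_+$ we have
\begin{align*}
0\le \int_{I} e^{\lambda\theta}\,{\rm d}\eta_i(\theta)h\le \int_I {\rm d}\eta_i(\theta)h
\end{align*}
for every subinterval $I$, because $0<e^{\lambda\theta}\le 1$ on $[-r_i,0]$. Using $|Th|\le T|h|$ for positive $T$ and the lattice norm, the operator norm is attained on $V_+$. Hence
\begin{align*}
\Big\Vert \int_I e^{\lambda\theta}{\rm d}\eta_i(\theta)\Big\Vert_{\mathcal L(V)}\le \sup_{\theta\in I}e^{\lambda\theta}\,{\rm Var}(\eta_i;I).
\end{align*}

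\emph{Step 2 (splitting).} For $\delta\in(0,\bar r)$ write $[-r_i,0]=[-r_i,-\delta]\cup(-\delta,0]$; if $r_i\le\delta$ the first piece is empty. Step 1 gives
\begin{align*}
\sup_{i}\Big\Vert \int_{-r_i}^0 e^{\lambda\theta}{\rm d}\eta_i(\theta)\Big\Vert_{\mathcal L(V)}\le e^{-\lambda\delta}\,\overline{{\rm Var}}+\sup_i {\rm Var}(\eta_i;-\delta,0).
\end{align*}
Letting $\lambda\to\infty$ removes the first term, since $\overline{{\rm Var}}<\infty$. Letting $\delta\to0$ afterwards finishes the proof, provided $\sup_i{\rm Var}(\eta_i;-\delta,0)\to0$.

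\emph{Step 3 (main obstacle).} The whole difficulty is showing $\sup_i{\rm Var}(\eta_i;-\delta,0)\to0$ as $\delta\to0$. For each fixed $i$, this follows from left-continuity $\eta_i(0^-)=\eta_i(0)$ (Assumption \ref{Mainassump1}(ii)) together with bounded variation, since ${\rm Var}(\eta_i;-\delta,0)\downarrow \|\eta_i(0)-\eta_i(0^-)\|=0$. For $\sup_i$, however, I need this left-continuity at $0$ to hold uniformly in $i$. I would first try to read Assumption \ref{Mainassump1}(ii) together with $\overline{{\rm Var}}<\infty$ in this uniform sense, i.e.\ equi-left-continuity of the family $(\eta_i)$ at $0$. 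This is automatic when only finitely many distinct $\eta_i$ occur, and it holds in both examples: in the second example ${\rm Var}(\eta_j;-\delta,0)\le\delta$, and in the first example there is no mass near $0$ at all.

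I do not see how to derive the uniform version from the stated assumptions alone. Without it I expect the claim can fail: take $\eta_i$ with an atom at $-1/i$, so that the atoms drift to $0$ and the integrals are at least $e^{-\lambda/i}$ for every $i$. I would therefore make this uniformity explicit as the hypothesis actually used, and then conclude by Steps 1–2.
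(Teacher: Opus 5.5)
Your argument is the paper's argument. The paper bounds all the integrals by $\overline{{\rm Var}}$ via positivity. It then cites a known estimate that is exactly your splitting bound, $\bigl\Vert\int_{-\bar r}^0 e^{\lambda\theta}\,{\rm d}\eta_j(\theta)\bigr\Vert_{\mathcal{L}(V)}\le {\rm Var}(\eta_j;-\epsilon,0)+{\rm Var}(\eta_j;-\bar r,-\epsilon)\,e^{-\lambda\epsilon}$. Finally it closes your Step 3 with the unproved assertion that, ``by the properties of the total variation,'' one can choose $\epsilon$ with $\sup_j{\rm Var}(\eta_j;-\epsilon,0)<\epsilon_0/2$.

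You are right that this does not follow from the hypotheses, and your counterexample is valid. Take $r_i=1$, $\eta_i=0$ on $[-1,-1/i]$ and $\eta_i=\mathcal{I}$ on $(-1/i,0]$. Each $\eta_i$ is left-continuous, has variation $1$ and has positive Stieltjes measure $\delta_{-1/i}\mathcal{I}$, so $\bar r=\overline{{\rm Var}}=1$. Yet $\sup_i{\rm Var}(\eta_i;-\delta,0)=1$ for every $\delta>0$, and $\sup_i\bigl\Vert\int_{-1}^0 e^{\lambda\theta}\,{\rm d}\eta_i(\theta)\bigr\Vert_{\mathcal{L}(V)}=\sup_i e^{-\lambda/i}=1$ for every $\lambda$. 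So the lemma is false as stated. What the argument actually proves is your version with the added hypothesis $\lim_{\delta\to0}\sup_j{\rm Var}(\eta_j;-\delta,0)=0$.

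Two smaller points. First, your claim that the paper's first example has no mass near $0$ requires $\inf_j r_j>0$, which the paper does not assume. If $r_j\to0$, the atoms at $-r_j$ drift to $0$, and that example is itself an instance of your counterexample.

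Second, Theorem \ref{Result1} does not really need the uniform statement. Since $PD_\lambda$ is off-diagonal, $\Vert(PD_\lambda)^2\Vert\le\Vert Le_\lambda\Vert\,\Vert\mathcal{F}\Xi_\lambda\Vert$. The first factor is bounded by a constant times $\overline{{\rm Var}}$, and the second decays like $e^{-\lambda\underline{l}/v_{\max}}$. Hence $r(PD_\lambda)<1$ for large $\lambda$, which is all that \textbf{(A3)} asks. Condition \textbf{(A4)} follows from your per-$j$ convergence by dominated convergence in the sum over $j$.
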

	\begin{proof}
		The key idea is to use the positivity of the measure to bound the operator norm. Since ${\rm d}\eta_j(\theta)$ is a positive operator-valued measure, it follows that
		\begin{align*}
			0 \le \int_{-r_j}^{0} e^{\lambda \theta} {\rm d}\eta_j(\theta) \le \int_{-\bar{r}}^{0} e^{\lambda \theta} {\rm d}\eta_j(\theta) \le \int_{-\bar{r}}^{0} {\rm d}\eta_j(\theta),\quad \forall j\in \mathcal{J},\lambda>0.
		\end{align*}
		%for all $j\in \mathcal{J}$ and $\lambda>0$.
		Therefore, taking the operator norm and then the supremum over $ j$, we obtain
		\begin{align}\label{uniform-bound}
			\sup_{j \in \mathcal{J}} \left\| \int_{-r_j}^{0} e^{\lambda \theta}  {\rm d} \eta_j(\theta) \right\|_{\mathcal{L}(V)} \le \sup_{j \in \mathcal{J}} \left\|\displaystyle \int_{-\bar{r}}^{0} {\rm d}\eta_j(\theta) \right\|_{\mathcal{L}(V)} \le \overline{{\rm Var}} < \infty.
		\end{align}
		
		Now, using Assumption \ref{Mainassump1}, it follows from \cite[Estimate~(3.12)]{BSc} that for any $ 0 < \epsilon < \bar{r}$ and for all sufficiently large $\lambda $, the following holds for all $j \in \mathcal{J} $:
		\begin{align}\label{key-estimate}
			\left\| \int_{-\bar{r}}^{0} e^{\lambda \theta} {\rm d}\eta_j(\theta) \right\|_{\mathcal{L}(V)} \le & {\rm Var} (\eta_j, -\epsilon, 0)
			+ {\rm Var} (\eta_j, -\bar{r}, -\epsilon) e^{-\lambda \epsilon}.
		\end{align}

		Since ${{\rm Var}}(\eta_j, -\bar{r},0) \le \overline{{\rm Var}}$ for all $j \in \mathcal{J}$, the estimate \eqref{uniform-bound} and \eqref{key-estimate} yield
		\begin{align*}
			\sup_{j \in \mathcal{J}} \left\| \int_{-r_j}^{0} e^{\lambda \theta} {\rm d}\eta_j(\theta) \right\|_{\mathcal{L}(V)}  \le \sup_{j \in \mathcal{J}} \left\| \int_{-\bar{r}}^{0} e^{\lambda \theta} {\rm d}\eta_j(\theta) \right\|_{\mathcal{L}(V)}
			\le \sup_{j \in \mathcal{J}} {\rm Var} (\eta_j, -\epsilon, 0) + \overline{{\rm Var}} \, e^{-\lambda \epsilon}.
		\end{align*}
		
		Let $\epsilon_0 > 0$ be arbitrary. By the properties of the total variation, we can choose $ \epsilon > 0 $ sufficiently small such that $$ \sup_{j \in \mathcal{J}} {\rm Var} (\eta_j, -\epsilon, 0) < \frac{\epsilon_0}{2}. $$  For this fixed $ \epsilon$, we then choose $ \lambda $ large enough so that $ \overline{{\rm Var}} \ e^{-\lambda \epsilon} < \frac{\epsilon_0}{2}$. Therefore, for all $ \lambda $ sufficiently large,
		 \begin{align*}
		 \sup_{j \in \mathcal{J}} \left\| \int_{-r_j}^{0} e^{\lambda \theta} {\rm d}\eta_j(\theta) \right\|_{\mathcal{L}(V)} < \epsilon_0,
		 \end{align*}
		which completes the proof.
	\end{proof}

%%%%%%%%%%%%%%%%%%%%%%%%%%%%%%%%%%%%%%%%%%%%%%
\end{document}